\documentclass[10pt,colorinlistoftodos]{amsart}

\usepackage{lmodern}
\usepackage[utf8]{inputenc}
\usepackage[T1]{fontenc}
\usepackage[english]{babel}
\usepackage{csquotes}
\usepackage{comment}

\usepackage[backend=biber,style=alphabetic,url=false,giveninits=true,sorting=anyt,date=year]{biblatex}
\addbibresource{mass_concentration.bib}

\usepackage{amsmath,amssymb,amsthm}
\usepackage{amsaddr}
\usepackage{mathtools,thmtools,stmaryrd,esint}
\usepackage{mathrsfs}

\usepackage{hyperref}
\usepackage[shortlabels]{enumitem}
\usepackage[nameinlink]{cleveref}

\usepackage{longtable,tabu}
\usepackage{aligned-overset}

\usepackage[disable]{todonotes}


\numberwithin{equation}{section}
\hypersetup{colorlinks=true}
\textheight 21cm
\textwidth 14.6cm

\makeatletter
\@namedef{subjclassname@2020}{\textup{2020} Mathematics Subject Classification}
\makeatother

\declaretheorem[name=Theorem,within=section]{theorem}
\declaretheorem[name=Lemma,numberlike=theorem]{lemma}
\declaretheorem[name=Proposition,numberlike=theorem]{proposition}
\declaretheorem[name=Corollary,numberlike=theorem]{corollary}
\declaretheorem[name=Definition,numberlike=theorem]{definition}
\declaretheorem[name=Remark,numberlike=theorem,style=remark]{remark}
\declaretheorem[name=Example,numberlike=theorem,style=remark]{example}


\newcommand{\dd}{\mathop{}\mathopen{}\mathrm{d}}

\newcommand{\N}{\mathbb{N}}

\newcommand{\R}{\mathbb{R}}
\newcommand{\Z}{\mathbb{Z}}
\newcommand{\Sph}{\mathbb{S}}
\newcommand{\Cspace}{\mathscr{C}}
\newcommand{\Mspace}{\mathscr{M}}
\newcommand{\ener}{\mathcal{E}}
\newcommand{\mass}{\mathbf{M}}
\newcommand{\mres}{\mathop{\hbox{\vrule height 6pt width .5pt depth 0pt \vrule height .5pt width 4pt depth 0pt}}\nolimits}
\newcommand{\id}{\mathrm{Id}}
\newcommand{\loc}{\mathrm{loc}}
\newcommand{\lsc}{\mathrm{lsc}}
\newcommand{\eps}{\varepsilon}
\newcommand{\uloc}{L^1_\mathrm{uloc}}
\newcommand{\st}{\;:\;}

\DeclarePairedDelimiter{\abs}{\lvert}{\rvert}
\DeclarePairedDelimiter{\norm}{\lVert}{\rVert}
\DeclarePairedDelimiterX{\intint}[2]{\llbracket}{\rrbracket}{#1,#2}
\DeclareMathOperator{\supp}{supp}
\DeclareMathOperator{\diam}{diam}

\DeclareMathOperator*{\esssup}{ess\,sup}
\DeclareMathOperator{\hdm}{\mathcal{H}}
\DeclareMathOperator{\lbm}{\mathcal{L}}
\DeclareMathOperator{\dist}{dist}
\DeclareMathOperator{\reach}{reach}

\newcommand{\weakto}{\rightharpoonup}
\newcommand{\mweakto}{\xrightharpoonup{\Cspace_0'}}
\newcommand{\narrowto}{\xrightharpoonup{\Cspace_b'}}
\newcommand{\xto}[2][]{\xrightarrow[#2]{#1}}

\newcommand{\xnarrowto}[1]{\xrightharpoonup[#1]{\Cspace_b'}}


\begin{document}
\title{Mass concentration in rescaled first order integral functionals}
\begin{abstract}
We consider first order local minimization problems of the form $\min \int_{\mathbb{R}^N}f(u,\nabla u)$ under a mass constraint $\int_{\mathbb{R}^N}u=m$. We prove that the minimal energy function $H(m)$ is always concave, and that relevant rescalings of the energy, depending on a small parameter $\varepsilon$, $\Gamma$-converge towards the $H$-mass, defined for atomic measures $\sum_i m_i\delta_{x_i}$ as $\sum_i H(m_i)$. We also consider Lagrangians depending on $\varepsilon$, as well as space-inhomogeneous Lagrangians and $H$-masses. Our result holds under mild assumptions on $f$, and covers in particular $\alpha$-masses in any dimension $N\geq 2$ for exponents $\alpha$ above a critical threshold, and all concave $H$-masses in dimension $N=1$. Our result yields in particular the concentration of Cahn-Hilliard fluids into droplets, and is related to the approximation of branched transport by elliptic energies.

\end{abstract}

\subjclass[2020]{Primary: 28A33 49J45 46E35 ; Secondary: 49Q20 76T99 49Q22 49J10}

\keywords{$\Gamma$-convergence, semicontinuity, integral functionals, convergence of measures, concentration-compactness, Cahn-Hilliard fluids, branched transport}


\author{Antonin Monteil}
\address{Universit\'e Paris-Est Cr\'eteil Val-de-Marne, LAMA, France}
\thanks{A.~M. acknowledges support by Leverhulme grant RPG-2018-438.}

\author{Paul Pegon}
\address{Universit\'e Paris-Dauphine, Ceremade \& INRIA Paris, MOKAPLAN, France}

\date\today
\maketitle
\tableofcontents

\section*{Notation}
\begin{tabu}  {X[2,c,m]  X[12,l,p]}
\(B_r(x)\) & open ball of radius \(r\) centered at \(x\);\\
\(B_r\) & open ball $B_r(0)$;\\
\(\Mspace(\R^N)\) & set of finite signed Borel measures on \(\R^N\);\\
 \(\Mspace_+(\R^N)\) & set of finite positive Borel measures on \(\R^N\);\\
$\Phi_\sharp\mu$ & pushforward of a measure $\mu\in\Mspace(\R^N)$ by a map $\Phi:\R^N\to\R^k$, defined as $A\mapsto \mu(\Phi^{-1}(A))$;\\
$\tau_x\mu$ & Borel measure $A \mapsto \mu(A-x)$ if \(\mu\in\Mspace(\R^N)\) and $x\in \R^N$;\\
$c_B \mu$ & Borel measure $\tau_{-x} (\mu \mres B)$ if $B$ is the ball $B_r(x)$;\\
$\mu_\ell \mweakto \mu$ & weak convergence of measures, i.e. weak-$\star$ convergence in duality with the space $\Cspace_0(\R^N)$ of continuous functions vanishing at infinity;\\
$\mu_\ell \narrowto \mu$ & narrow convergence of measures, i.e. weak-$\star$ convergence in duality with the space $\Cspace_b(\R^N)$ of continuous and bounded  function;\\
$\Sigma$ & set of increasing maps $\sigma : \N \to \N$;\\
$\sigma_1 \preceq \sigma_2$ & $\sigma_1, \sigma_2 \in \Sigma$ are such that $\sigma_1(\intint{n}{+\infty}) \subseteq \sigma_2(\N)$ for some $n\in \N$ ;\\
$\pm$ & fixed to either $+$ or $-$ in the whole statement or proof, and $\mp = -(\pm)$.
\end{tabu}

\section{Introduction}\label{sectionIntroduction}

\subsection{Setting}

Let \(N\in\N^\ast\) and let \(f:\R^N\times\R\times\R^N\to[0,+\infty]\) be a Borel function such that $f(\cdot,0,0) \equiv 0$. Consider the following energy functional, defined for any fixed \(x\in\R^N\) on the set of finite Borel measures \(\Mspace(\R^N)\) on \(\R^N\) by
\begin{equation}\label{def_ener}
\ener^x_f(\mu)=
\begin{dcases}
\int_{\R^N} f(x,u(y),\nabla u(y))\dd y &\text{if } \mu = u\lbm^N, u\in W^{1,1}_\loc(\R^N),\\
+\infty&\text{otherwise.}
\end{dcases}
\end{equation}
The minimization of this energy under a mass constraint gives rise to the notion of minimal cost function, valued in $[0,+\infty]$ and defined by
\begin{equation}\label{def_Hf}
H_f(x,m) \coloneqq \inf\left\{\ener^x_f(u\lbm^N) \st u \in W^{1,1}_\loc\cap L^1(\R^N) \text{ such that } \int_{\R^N} u = m\right\}.
\end{equation}

As a preliminary result, which applies to \labelcref{def_ener} and deserves interest on its own, we will establish the following:
\begin{theorem}\label{concaveDN}
Let $f :\R\times \R^N \to [0,+\infty]$ be Borel measurable such that $f(0,0) = 0$. The function defined for every $m\in \R$ by
\begin{equation}\label{PropertiesDefinitionH}
H_f(m) \coloneqq \inf\left\{\int_{\R^N}f(u,\nabla u)\st u\in W^{1,1}_\loc\cap L^1(\R^N,\R),\,\int_{\R^N}u=m\right\}
\end{equation}
vanishes at $0$ and it is either identically $+\infty$ on $(0,+\infty)$, or it is everywhere finite, continuous, concave and non-decreasing on \([0,+\infty)\). The symmetric statement on $(-\infty,0]$ holds as well.
\end{theorem}
\noindent The proof is very simple and works with no further assumptions on \(f\).

Our main purpose is to prove that if $(f_\eps)_{\eps > 0}$ is a family of functions $f_\eps:\R^N\times\R\times\R^N\to[0,+\infty]$ which converges pointwise to \(f\) as \(\eps\to 0\) and satisfies some conditions, then the rescaled energy functionals \(\ener_\eps\), defined for each \(\eps>0\) on \(\Mspace(\R^N)\) by
\begin{equation}
\label{mainEnergyEpsilon}
\ener_\eps(\mu)= 
\begin{dcases}
\int_{\R^N} f_\eps(x,\eps^N u(x), \eps^{N+1} \nabla u(x)) \eps^{-N} \dd x &\text{if }\mu = u\lbm^N, u\in W^{1,1}_\loc(\R^N),\\
+\infty&\text{otherwise,}
\end{dcases}
\end{equation}
\(\Gamma\)-converge as \(\eps\to 0\), for the narrow or weak convergence of measures, to the \(H_f\)-mass, defined on \(\Mspace(\R^N)\) by (see \Cref{defHmass}):
\begin{multline*}
\mass^{H_f}(\mu) \coloneqq \int_{\R^N} H_f(x,\mu(\{x\}))\dd\hdm^0(x)\\
 +\int_{\R^N} H_f'(x,0^+) \dd \mu^d_+(x) + \int_{\R^N} H_f'(x,0^-) \dd \mu^d_-(x).
 \end{multline*}
where \(\mu = \mu^a+\mu^d\) is the decomposition of $\mu$ into its atomic part $\mu^a$ and its diffuse part $\mu^d$, $\mu^d = \mu^d_+ - \mu^d_-$ is the Jordan decomposition of $\mu^d$ into positive and negative parts, and $H_f'(x,0^\pm)$ is the recession at $0$, that is
\[H_f'(x,0^\pm) \coloneqq \lim_{m\to 0^\pm}\frac{H_f(x,m)}{\abs{m}}\in [0,+\infty].\]

Notice that the exponents over $\eps$ in the definition of $\ener_\eps$ are tuned so that if \(B_r(x_0) \subseteq\R^N\) and $u_\eps$ is a mass-preserving rescaling of $v_\eps$ given by $u_\eps(x) \eqqcolon \eps^{-N} v_\eps(\eps^{-1}(x-x_0))$, then
\[
\int_{B_r(x_0)} f_\eps(x,\eps^N u_\eps(x), \eps^{N+1} \nabla u_\eps(x)) \eps^{-N} \dd x  = \int_{B_{r/\eps}} f_\eps(x_0+\eps y,v_\eps(y), \nabla v_\eps(y)) \dd y,
\]
so that the energy contribution of a mass \(m\geq 0\) contained in a ball \(B_r(x_0)\) should be of the order of \(H_f(x_0,m)\). This explains why \(\mass^{H_f}\) is expected to be the \(\Gamma\)-limit of \(\ener_\eps\). Nevertheless, it is not true in general (see \Cref{sectionExamplesIntroduction} below), and we will need a couple of assumptions on \(f\) and \(f_\eps\) detailed in the next section.

This kind of singular limit of integral functionals is reminiscent of several variational models with physical relevance which have been the object of intensive mathematical analysis, such as Cahn-Hilliard fluids with concentration on droplets \cite{bouchitteTransitionsPhasesAvec1996} (which we recover in \Cref{droplets}) or on singular interfaces \cite{modicaEsempioDiGamma1977}, toy models for micromagnetism and liquid crystals like Aviles-Giga \cite{avilesLowerSemicontinuityDefect1999} and Landau-de Gennes \cite{baumanAnalysisNematicLiquid2012}, or Ginzburg-Landau theory of superconductivity \cite{bethuelGinzburgLandauVortices2017}.

\subsection{Assumptions and main result}

Our first two assumptions are rather standard and guarantee the sequential lower semicontinuity of the functionals \(\ener_f^x\),

\begin{enumerate}[\textnormal{(H}$_1$\textnormal{)},leftmargin=*]
\item\label{hypLowerContinuous} \(f : \R^N\times\R\times\R^N \to [0,+\infty]\) is lower semicontinuous,
\end{enumerate}

\begin{enumerate}[\textnormal{(H}$_1$\textnormal{)},leftmargin=*,resume]
\item\label{hypConvex} $f(x,u,\cdot)$ is convex for every $x\in \R^N$, $u\in\R$.\end{enumerate}

In order for vanishing parts to have no energetic contribution, we will impose
\begin{enumerate}[\textnormal{(H}$_1$\textnormal{)},leftmargin=*,resume]
\item\label{hypZero} $f(x,0,0)=0$ for every $x\in \R^N$.
\end{enumerate}

We also need continuity in the spatial variable,
\begin{enumerate}[\textnormal{(H}$_1$\textnormal{)},leftmargin=*,resume]
\item\label{hypContinuous} $f(\cdot,u,\xi)$ is continuous for every $u\in\R,\xi\in \R^N$.
\end{enumerate}

Next, we need a compactness assumption which ensures relative compactness in the weak topology of \(W^{1,p}_\loc(\R^N)\) for sequences of bounded energy \(\ener_f^x\) and bounded mass; it will also be needed in obtaining lower bounds for the energy (see \Cref{vanishingCost}):
\begin{enumerate}[\textnormal{(H}$_1$\textnormal{)},leftmargin=*,resume]
\item\label{hypCompact} there exist \(\alpha,\beta\in(0,+\infty)\), \(p\in (1,+\infty)\) such that for all $(x,u,\xi)\in \R^N \times\R\times\R^N$,
\[f(x,u,\xi) \geq \alpha\abs{\xi}^p-\beta u.\]
\end{enumerate}

We also impose a comparison condition on the slopes of \(f(x,\cdot,\xi)\) and $H_f(x,\cdot)$ at the origin, which will be needed in order to show that the $\Gamma-\liminf$ is bounded from below by the $H_f$-mass, and which rules out some non-trivial scale invariant Lagrangians for which the expected $\Gamma$-convergence result fails (see \Cref{sectionExamplesIntroduction}):
\begin{enumerate}[\textnormal{(H}$_1$\textnormal{)},leftmargin=*,resume]
\item\label{hypSlope} for every $x_0\in \R^N$,
\begin{equation}\label{slopeInfSimplified}
H_f'(x_0,0^\pm) \leq f'_-(x_0,0^\pm,0)\coloneqq\displaystyle\liminf_{(x,u,\xi) \to (x_0,0^\pm,0)} \frac{f(x,u,\xi)}{\abs{u}}.
\end{equation}
\end{enumerate}
We give a general assumption (simple in dimension one but quite technical in dimension $N\geq 2$) depending only on the Lagrangian $f$ so as to guarantee such a condition (see  \labelcref{hypSlopeLagrangian} and \Cref{slopeConditionCorollary} in \Cref{sectionSlopeOrigin}).

Since our aim is not to care much about the dependence on \(x\), we shall impose a spatial quasi-homogeneity condition:
\begin{enumerate}[\textnormal{(H}$_1$\textnormal{)},leftmargin=*,resume]
\item\label{hypQuasiHomogeneity} there exists $C < +\infty$ such that for every $x,y\in \R^N, u\in \R,\xi \in \R^N$,
\[f(y,u,\xi) \leq C (f(x,u,\xi) + u).\]
\end{enumerate}

Last of all, we need the family of functions $f_\eps:\R^N\times\R\times\R^N\to[0,+\infty]$ to converge towards \(f\) in a suitable sense, namely, we assume
\begin{enumerate}[\textnormal{(H}$_1$\textnormal{)},leftmargin=*,resume]
\item\label{hypGammaCv}
$f_\eps \uparrow f$ and $f'_{\eps,-}(\cdot,0^\pm,0) \uparrow f'_-(\cdot,0^\pm,0)$ as $\eps \to 0$.
\end{enumerate}
\noindent Notice that this assumption is empty if $f_\eps$ does not depend on $\eps$.

Our main result is the following:
\begin{theorem}\label{mainGammaConvergence}
If $(f_\eps)_{\eps > 0}$ satisfies \labelcref{hypGammaCv} with each \(f_\eps\) satisfying \labelcref{hypLowerContinuous}--\labelcref{hypCompact} and the limit \(f\) satisfying \labelcref{hypSlope}--\labelcref{hypQuasiHomogeneity}, then \(\mass^{H_f}\) is the \(\Gamma\)-limit as \(\eps\to 0\) of the functionals \(\ener_\eps\), defined in \labelcref{mainEnergyEpsilon}, for both the weak convergence and the narrow convergence of measures.
\end{theorem}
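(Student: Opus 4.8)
The plan is to prove the two $\Gamma$-convergence inequalities separately, observing first that it is enough to establish the $\Gamma$-$\liminf$ inequality along weakly convergent sequences and to produce recovery sequences converging \emph{narrowly}: a narrowly convergent sequence converges weakly, so a narrow recovery sequence also works for the weak topology, while a $\liminf$ inequality valid along all weakly convergent sequences holds a fortiori along narrowly convergent ones. We shall also use that, as a consequence of~\labelcref{hypGammaCv} (and of the analysis underlying~\Cref{sectionSlopeOrigin} and~\labelcref{hypCompact}--\labelcref{hypSlope}, which prevents a cheap loss of mass in the approximation), one has the monotone convergences $H_{f_\eps}(x,\cdot)\uparrow H_f(x,\cdot)$ and $H_{f_\eps}'(x,0^\pm)\uparrow H_f'(x,0^\pm)$; in effect the family $(f_\eps)_\eps$ may then be handled as if it were the single Lagrangian $f$.

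For the $\Gamma$-$\liminf$ inequality, take $u_\eps\mweakto u$ and assume, after extracting a subsequence, that $\ener_\eps(u_\eps)$ converges to a finite limit, so that $u_\eps\in W^{1,1}_\loc(\R^N)$. Attach to $u_\eps$ its energy measure $\lambda_\eps\coloneqq f_\eps(x,\eps^N u_\eps,\eps^{N+1}\nabla u_\eps)\,\eps^{-N}\lbm^N\in\Mspace_+(\R^N)$, of total mass $\ener_\eps(u_\eps)$; up to a further subsequence $\lambda_\eps\mweakto\lambda$, and it suffices to show $\lambda(\R^N)\geq\mass^{H_f}(u)$. The heart of the argument is a localized lower bound: for every ball $B_r(x_0)$ with $u(\partial B_r(x_0))=0$,
\[
\liminf_{\eps\to 0}\lambda_\eps(B_r(x_0))\;\geq\;H_f\bigl(x_0,\,u(\overline{B_r(x_0)})\bigr)-\omega(x_0,r),
\]
with $\omega$ negligible in the relevant refinement limit. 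To prove it, rescale around $x_0$ via $v_\eps(y)\coloneqq\eps^N u_\eps(x_0+\eps y)$ on $B_{r/\eps}$, so that $\int_{B_{r/\eps}}v_\eps=\int_{B_r(x_0)}u_\eps\to u(\overline{B_r(x_0)})$ and $\lambda_\eps(B_r(x_0))=\int_{B_{r/\eps}}f_\eps(x_0+\eps y,v_\eps,\nabla v_\eps)\dd y$; the compactness assumption~\labelcref{hypCompact} bounds $v_\eps$ in $W^{1,p}$ on each fixed ball, and a dyadic-annulus (mean value) selection of a good truncation radius in the rescaled picture, where $u_\eps$ carries little mass and little energy, allows one to cut $v_\eps$ off into a competitor $\tilde v_\eps\in W^{1,1}\cap L^1(\R^N)$ with $\int\tilde v_\eps$ close to $u(\overline{B_r(x_0)})$ and $\ener^{x_0}_{f_\eps}(\tilde v_\eps)\leq\lambda_\eps(B_r(x_0))+o(1)$, the cutoff cost being controlled through the slope condition~\labelcref{hypSlope}. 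Combining the definition~\labelcref{def_Hf} of $H_f$, a lower semicontinuity argument for integral functionals — valid under~\labelcref{hypLowerContinuous}--\labelcref{hypConvex}, robust to the translation $x_0+\eps y\to x_0$ by~\labelcref{hypContinuous} and to $f_\eps\uparrow f$ — the convergence $H_{f_\eps}\to H_f$ and the lower semicontinuity of $H_f(x_0,\cdot)$ from~\Cref{concaveDNintro}, one obtains the displayed bound. Summing it over a fine partition of a large ball into such small balls, those centered at an atom $x_i$ of $u$ contribute $H_f(x_i,m_i)$ in the limit, while balls carrying only diffuse mass contribute, using concavity of $H_f$ and the identity $H_f'(x,0^\pm)=\lim_{m\to0^\pm}H_f(x,m)/\abs m$, at least $\int H_f'(x,0^+)\dd u^d_+ +\int H_f'(x,0^-)\dd u^d_-$; since distinct balls correspond to $\lambda$-almost disjoint sets, this yields $\lambda(\R^N)\geq\mass^{H_f}(u)$.

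For the $\Gamma$-$\limsup$ inequality, reduce first, by a diagonal argument, to targets with finitely many atoms and no diffuse part: given $u$ with $\mass^{H_f}(u)<+\infty$, keep finitely many atoms of $u$ and replace the diffuse parts $u^d_\pm$ by finitely many atoms of small mass on a fine grid. This approximating sequence converges narrowly to $u$, and by concavity of $H_f$ (which gives $H_f(x,m)\leq H_f'(x,0^\pm)\abs m$) together with a good choice of the grid points, $\limsup\mass^{H_f}\leq\mass^{H_f}(u)$, which is all that is needed since the $\Gamma$-lower limit is automatically lower semicontinuous. For $u=\sum_{i=1}^k m_i\delta_{x_i}$, choose for each $i$ a profile $v_i\in W^{1,1}\cap L^1(\R^N)$ with $\int v_i=m_i$ and $\ener^{x_i}_f(v_i)\leq H_f(x_i,m_i)+\delta$, truncate it to compact support with the same mass and essentially the same energy (again paying a cutoff cost controlled by~\labelcref{hypSlope}), and set $u_\eps\coloneqq\sum_{i=1}^k\eps^{-N}v_i\bigl(\eps^{-1}(\cdot-x_i)\bigr)$, whose bumps have disjoint supports once $\eps$ is small. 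Then $u_\eps\narrowto u$, and by the change of variables $\ener_\eps(u_\eps)=\sum_i\int f_\eps(x_i+\eps y,v_i,\nabla v_i)\dd y\leq\sum_i\int f(x_i+\eps y,v_i,\nabla v_i)\dd y$ since $f_\eps\leq f$; the spatial continuity~\labelcref{hypContinuous} and the domination $f(x_i+\eps y,v_i,\nabla v_i)\leq C\bigl(f(x_i,v_i,\nabla v_i)+v_i\bigr)$ from the quasi-homogeneity~\labelcref{hypQuasiHomogeneity} let one pass to the limit, giving $\limsup_\eps\ener_\eps(u_\eps)\leq\sum_i\ener^{x_i}_f(v_i)\leq\sum_i H_f(x_i,m_i)+k\delta$, and $\delta\to0$ concludes.

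The main obstacle lies in the localized lower bound of the $\Gamma$-$\liminf$: one has to rule out the concentration-compactness pathologies in which mass inside a small ball is carried at a scale much coarser than $\eps$ — in which case it ``vanishes'' at scale $\eps$ and should be charged to the diffuse term $H_f'(x,0^\pm)$ rather than to $H_f$ — or simply escapes compactness, and one must organize the cutoff errors, through a judicious dyadic choice of radii and the correct order of the limits in $\eps$ and in the mesh of the partition, so that they are genuinely negligible. Producing the diffuse contribution with exactly the slope $H_f'(x,0^\pm)$, rather than some strictly smaller effective slope arising from a cheaper way of laying down vanishing mass, is the delicate point, and it is precisely what the slope hypothesis~\labelcref{hypSlope} — together with the second half of~\labelcref{hypGammaCv} at the level of the approximating family — is designed to ensure.
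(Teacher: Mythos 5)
Your $\Gamma$-$\limsup$ argument is essentially the paper's (reduction to finitely atomic measures as in \Cref{relaxationMass}, then rescaled near-optimal profiles passed to the limit via \labelcref{hypContinuous}, \labelcref{hypQuasiHomogeneity}, \labelcref{hypGammaCv}); the truncation to compact support you insert is both unnecessary and not obviously feasible under these hypotheses — the paper instead takes the pointwise maximum of the $k$ rescaled bumps and splits the energy over the sets $\{u_\eps=u_\eps^i\}$, as in \Cref{limsup_upper_bound}. The genuine gap is in the $\Gamma$-$\liminf$, where you replace the paper's concentration--compactness localization by the classical cutoff strategy that the paper explicitly avoids. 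First, the cutoff itself is not justified: the hypotheses give no upper bound on $f$ (only the coercivity \labelcref{hypCompact}), so a competitor $\tilde v_\eps$ obtained by modifying $v_\eps$ on a ``good annulus'' has an energy you cannot estimate — $f$ may be arbitrarily large or infinite at the new values of $(u,\nabla u)$ created by the surgery — and \labelcref{hypSlope} does not control cutoff costs; its role is entirely different, namely to identify $f'_-(x,0^\pm,0)$ with $H_f'(x,0^\pm)$ through the explicit profiles of \Cref{slopeUpperBound} and \Cref{slopeUpperBound1D}. The paper never modifies the functions: it decomposes the rescaled mass by the profile decomposition (\Cref{profileDecomposition}) into bubbles, handled by weak $W^{1,p}_\loc$ compactness and lower semicontinuity of integral functionals, plus a vanishing remainder charged at rate $f'_-(x_0,0^\pm,0)$ by \Cref{vanishingCost} — a substantive lemma (Gagliardo--Nirenberg on perforated domains of uniform reach) which you correctly flag as ``the delicate point'' but never prove.

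Second, your reduction of the family $(f_\eps)$ to the single Lagrangian $f$ via ``$H_{f_\eps}(x,\cdot)\uparrow H_f(x,\cdot)$'' is circular. From $f_\eps\uparrow f$ one only gets $H_{f_\eps}\leq H_f$ for free; the reverse inequality in the limit $\eps\to 0$ is not a formal consequence of \labelcref{hypGammaCv}, since near-minimizers for $f_\eps$ could split or vanish as $\eps\to 0$. Ruling this out is precisely the content of \Cref{almostMinimisers}, whose conclusion moreover naturally produces only $H_f^-=H_f\wedge(f'_-(x,0^\pm,0)\abs{m})$, upgraded to $H_f$ only via \labelcref{hypSlope}. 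So your cutoff competitor would at best certify a lower bound by $H_{f_\eps}(x_0,\cdot)$, and the passage from there to $H_f(x_0,\cdot)$ is the statement you are trying to prove. The outer skeleton (energy measures, atomic/diffuse splitting via Radon--Nikod\'ym, division by $u(B_r)\to 0$ to recover the slope on the diffuse part) is sound, but the two load-bearing steps of the lower bound are missing.
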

In particular, as a \(\Gamma\)-limit, the functional \(\mass^{H_f}\) must be lower semicontinuous for the weak convergence of measures (and so for the narrow convergence as well). This implies that \(H_f\) is lower semicontinuous on \(\R^N\times\R\) (see \Cref{relaxationMass}).

We point out that for the \(\Gamma-\limsup\), we need weaker assumptions on \(f_\eps\) and \(f\) (see \Cref{limsup_upper_bound}), which will be useful for some applications (see \Cref{droplets}).

We will allow ourselves slight abuses of notation. First of all, we will sometimes consider Lagrangians defined on $\R \times \R^N$ which do not depend on $x$ and still refer to hypotheses \labelcref{hypLowerContinuous}--\labelcref{hypGammaCv} ; we will use the notation $\ener_f$ instead of $\ener_f^x$ in \labelcref{def_ener} and consider $H_f$ as a function of $m$ only in \labelcref{def_Hf}. Besides, we will often identify functions $u\in L^1_{loc}$ with the measures $\mu = u\mathcal L^N$, so as to concisely write $\ener_f(u)$ instead of $\ener_f(u\lbm^N)$. Finally, we will also consider Lagrangians defined only for $u\in \R_+$, which may be thought as defined for $u\in \R$, set to $+\infty$ when $u$ is negative\footnote{Notice that if any of our assumptions is satisfied for a Lagrangian defined for $u \in \R_+$, then it holds also for the Lagrangian extended to $\R$ in this way.}. The resulting minimal cost function $H_f$ and its associated $H_f$-mass may be thought as defined on $\R_+$ and $\Mspace_+(\R^N)$ respectively, as they will be infinite on the respective complements.

\subsection{Remarks and applications}\label{sectionExamplesIntroduction}

We start with two situations where the expected $\Gamma$-convergence fails and which justify the importance of \labelcref{hypZero} and \labelcref{hypSlope}, then we provide examples and applications of our result, as a short summary of \Cref{sectionExamples}, where full details are provided. We restrict our attention to positive measures and Lagrangians defined for $u\geq 0$.

\subsubsection*{Vanishing parts do not contribute to energy.}
By assumption \labelcref{hypZero} no energy is given to any set where a function $u$ vanishes. It is a necessary condition for $\mass^{H_f}$ to be lower semicontinuous (a necessary condition to be a $\Gamma$-limit) and not identically $+\infty$. Indeed if $\mass^{H_f}$ is lower semicontinuous and finite for some measure $\mu \in \Mspace(\R^N)$ then  (see \Cref{without_f_zero_at_zero}) $\mass^{H_f}(t \mu) \leq \mass^{H_f}(\mu)$ for every $t\in (0,1)$ hence $\mass^{H_f}(0) \leq \liminf_{t\to 0^+} \mass^{H_f}(t\mu) \leq \mass^{H_f}(\mu) <+\infty$. Thus $\mass^{H_f}$ is not identically $+\infty$ if and only if $\mass^{H_f}(0) < +\infty$, i.e. $\int_{\R^N} H_f(x,0)\dd\hdm^0(x) < +\infty$. But since $H_f(x,0) = (+\infty) \times f(x,0,0)$ this can only happen if $f(\cdot,0,0) \equiv 0$. This justifies imposing \labelcref{hypZero}.

\subsubsection*{Scale invariant Lagrangians.} In \Cref{sectionCounterExamples}, we will see that in the particular case \(f_\varepsilon(x,u,\xi)=u^{-p(1-\frac{1}{p^\star})}\abs{\xi}^p\) for \(p\in (1,N)\) and \(p^\star=\frac{pN}{N-p}\), our \(\Gamma\)-convergence result does not hold as a consequence of the fact that \(\mathcal{E}_\varepsilon\) does not depend on \(\varepsilon\). Note that in this case, the slope assumption \labelcref{hypSlope} does not hold, and we also provide a simple variant of such energies which satisfies all our assumptions except \labelcref{hypSlope} where the $\Gamma$-convergence towards $\mass^{H_f}$ also fails, thus justifying the need for such a slope condition.

\subsubsection*{Concave \(H\)-masses in dimension one.} Consider the energy given for $\mu = u\lbm^N$ by
\[\ener_f(\mu) = \int_{\R^N} \abs{\nabla u}^2 + c(u)\quad\text{with Lagrangian}\quad f(x,u,\xi) = \abs{\xi}^2 + c(u).\]
In dimension $N =1$, it is shown in \cite{wirthPhaseFieldModels2019} that for any concave continuous function \(H\) with \(H(0)=0\), there exists a suitable $c \geq 0$ such that \(H_f=H\). As explained in \Cref{generalCosts1d}, \Cref{mainGammaConvergence} implies that the rescaled energies
\begin{equation}
\ener_\eps(\mu) = \int_{\R^N} f(\eps^N u,\eps^{N+1}\nabla u) \eps^{-N}= \int_{\R^N} \eps^{N+2}\abs{\nabla u}^2+\eps^{-N}c(\eps^N u).
\end{equation}
$\Gamma$-converge to $\mass^{H}$, leading to an elliptic approximation of any concave $H$-mass in dimension one. In dimension \(N\geq 2\), we will show that $H_f$ must be concave on $[0,+\infty)$, and strictly concave after the possible initial interval where it is linear (see \Cref{propositionStricConcave}) ; however, we have no solution to the inverse problem, consisting in characterizing the class of attainable minimal cost functions \(H=H_f\) for Lagrangians $f$ satisfying our assumptions.

\subsubsection*{Homogeneous \(H\)-masses in any dimension.} We consider the functional given for $\mu = u \lbm^N$ by
\begin{equation}
\ener_{f}(\mu) = \int_{\R^N} f(u,\nabla u) = \int_{\R^N} \abs{\nabla u}^p + u^s,\quad p>1,\  s\in (-p',1].
\end{equation}
Then, the rescaled energies 
\[\ener_\eps(\mu) = \int_{\R^N} f(\eps^N u,\eps^{N+1}\nabla u) \eps^{-N}= \int_{\R^N} \eps^{pN+p-N}\abs{\nabla u}^p+\eps^{-(1-s)N}u^s\]
$\Gamma$-converge to a non-trivial multiple of some $\alpha$-mass $\mass^\alpha \coloneqq \mass^{t\mapsto t^\alpha}$ where the exponent $\alpha = (1 -\frac sp + \frac sN)(1 -\frac sp + \frac 1N)^{-1}$ ranges over $(1-\frac{2}{N+1},1]$ when $(s,p)$ varies in its range and $N\geq 1$. More details are given in \Cref{homogeneousCosts}.

\subsubsection*{Space-inhomogeneous $H$-masses.} We may consider for example functionals given, for $\mu = u\lbm^N$, by
\[\ener_f(\mu) = \int_{\R^N} a(x) g(u(x),\nabla u(x)) \dd x,\]
where $a : \R^N \to \R$ is a continuous function valued in $[M^{-1}, M]$ for some $M\in (0,+\infty)$ and $g : \R \times \R^N \to [0,+\infty]$ satisfies our assumptions \labelcref{hypLowerContinuous}-\labelcref{hypSlope}, e.g. $g(u,\xi) = \abs{\xi}^p+ u^s$ with $s\in (-p',1]$) as above. In the latter case we obtain $\Gamma$-convergence towards a space-inhomogeneous $\alpha$-mass for some $\alpha \in (0,1]$ given by
\[\mass^{H_f}(\mu) = 
\begin{dcases*}
    C \sum_{1\leq i\leq \ell} a(x_i) m_i^\alpha& if $\mu = \sum_{i=1}^\ell m_i \delta_{x_i}$,\\
    +\infty & otherwise,
\end{dcases*}
\]
for some constant $C\in (0,+\infty)$.

\subsubsection*{Cahn-Hilliard approximations of droplets models.} Following the works of \cite{bouchitteTransitionsPhasesAvec1996,dubsProblemesPerturbationsSingulieres1998}, we consider the functionals
\begin{equation}
\mathcal W_\eps(u) = 
\int_{\R^N} \eps^{-\rho} (W(u) + \eps\abs{\nabla u}^2),
\end{equation}
where $W(t) \sim_{t\to+\infty} t^s$ for some exponent $s\in (-2,1)$. As shown in \Cref{droplets}, we way rewrite these functionals to fit our general framework, and recover known $\Gamma$-convergence results, under slightly more general assumptions, as stated in \Cref{GammaConvergenceW}. The $\Gamma$-limit is a non-trivial multiple of the $\alpha$-mass with $\alpha = \frac{1-s/2+s/N}{1-s/2+1/N}$.

\subsubsection*{Elliptic approximations of Branched Transport.}
The energy of Branched Transport (see \cite{bernotOptimalTransportationNetworks2009} for an account of the theory), in its Eulerian formulation, is an $H$-mass defined this time on vector measures $w$ whose divergence is also a measure,
\begin{equation}
\mass_1^H(w)\coloneqq\int_\Sigma H(x,\theta(x))\dd\hdm^1(x)+\int_{\R^d} H'(x,0^+)\dd\abs{w^\perp},
\end{equation}
where $w = \theta \xi \cdot \hdm^1 \mres \Sigma + w^\perp$ is the decomposition of $w$ into its $1$-rectifiable and $1$-diffuse parts (see \Cref{branchedTransport} for more details). An elliptic approximation of Modica-Mortola type has been introduced in \cite{oudetModicaMortolaApproximationBranched2011} for $H(m) = m^\alpha, \alpha \in (0,1)$, and their $\Gamma$-convergence result in dimension $d=2$ has been extended to any dimension in \cite{monteilEllipticApproximationsSingular2015} by a slicing method which relates the energy of $w$ to the energy of its slicings. The same slicing method, together with \Cref{mainGammaConvergence}, would allow to prove the $\Gamma$-convergence of the functionals
\begin{equation}
\ener_\eps(w)=
\begin{cases}
\int_{\R^d}f_\eps(x,\eps^{d-1} \abs{v}(x), \eps^{d}\abs{\nabla v}(x)) \eps^{1-d} \dd x &\text{if \(w = v\lbm^d , v\in W^{1,1}_\loc(\R^d,\R^d)\),}\\
+\infty&\text{otherwise,}
\end{cases}
\end{equation}
toward \(\mass_1^{H_f}\) for Lagrangians \(f_\eps\to f\) satisfying \labelcref{hypLowerContinuous}--\labelcref{hypGammaCv}, thus covering a wide range of concave $H$-masses over vector measures with divergence.

\subsection{Structure of the paper} In \Cref{sectionMinimalCost}, we prove the concavity of the minimal cost function \(H_f\) with respect to the mass variable \(m\) in full generality (\Cref{concaveDN}), we establish useful properties of general \(H\)-masses, and we identify the slope at the origin of \(H_f\) in terms of \(f\) under our assumption (\Cref{slopeUpperBound} and \Cref{slopeUpperBound1D}). In \Cref{sectionLowerBound}, we apply a concentration-compactness principle to provide a profile decomposition theorem for sequences of positive measures (\Cref{profileDecomposition}), which is used to obtain our main lower bound for the energy \(\ener_f\) (\Cref{almostMinimisers}) and also yields an existence criterion for profiles with minimal energy under a mass constraint (\Cref{existenceProfile}). \Cref{sectionGammaConvergence} is dedicated to proving lower and upper bounds on the rescaled energies \(\ener_\eps\) (\Cref{lowerBound} and \Cref{limsup_upper_bound}) that imply in particular our main \(\Gamma\)-convergence result (\Cref{mainGammaConvergence}). Last of all, in \Cref{sectionExamples}, we provide counterexamples and several examples of energy functionals that fall into our framework, as summarized in the previous section.

\section{Minimal cost function and   \texorpdfstring{\(H\)-mass}{H-mass}}\label{sectionMinimalCost}

In this section, we study the properties of general $H$-masses, of costs $H_f$ associated with general Lagrangians $f$, and we relate the slope of $H_f$ at $m=0$ to that of $f$ at $(u,\xi) = (0,0)$ in the variable $u$, under particular conditions.

\subsection{Concavity and lower semicontinuity of the minimal cost function}

Before proving \Cref{concaveDN}, let us note that it covers the case where we have a constraint \((u,\nabla u)\in A\), where \(A\subseteq\R\times \R^N\) is Borel measurable, by considering Lagrangians \(f\) taking infinite values.

\begin{proof}[Proof of \Cref{concaveDN}]
Let us first assume that $f(u,\xi) = +\infty$ when $u< 0$, so as to restrict ourselves to non-negative functions. We first prove that \(H_f\) is concave on \((0,+\infty)\).
Let \(m>0\) and \(u\in W^{1,1}_\loc\cap L^1(\R^N,\R_+)\) such that \(\int_{\R^N}u=m\). We pick a non-zero vector \(v\in\R^N\) and for every \(t\in\R\), we set \(u^t(\cdot)=u(\cdot+tv)\) and
\[
u\wedge u^t(\cdot)=\min\{u(\cdot),u^t(\cdot)\},\quad u\vee u^t(\cdot)=\max\{u(\cdot),u^t(\cdot)\}.
\]
We have \(u\wedge u^t+u\vee u^t=u+u^t\). Hence
\begin{equation}
\label{massAdditivity}
\int_{\R^N}u\wedge u^t+\int_{\R^N}u\vee u^t=2\int_{\R^N}u=2m.
\end{equation}

Moreover, it is standard that \(u\wedge u^t=u-(u^{t}-u)_-\in W^{1,1}_\loc(\R^N)\) with \(\nabla (u\wedge u^t)=\nabla u\) a.e. in \(\{u\leq u^t\}\) and \(\nabla (u\wedge u^t)=\nabla u^t\) a.e. in \(\{u>u^t\}\). Since \(u\vee u^t=u+u^t-u\wedge u^t\), we have similar identities for \(u\vee u^t\), and we obtain
\begin{equation}
\label{energyAdditivity}
\ener_f(u\wedge u^t)+\ener_f(u\vee u^t)=\ener_f(u)+\ener_f(u^t)=2\ener_f(u).
\end{equation}
Now, let \(M:t\mapsto\int_{\R^N}u\wedge u^t\). In view of \labelcref{massAdditivity}, \labelcref{energyAdditivity}, and by definition of \(H\), we have proved
\begin{equation}
H_f(M(t))+H_f(2m-M(t))\leq 2\ener_f(u).
\end{equation}
Now, by continuity of translations in \(L^1\) and since the map \((x,y)\mapsto x\wedge y\) is Lipschitz on \(\R^2\), we have that \(M\) is continuous on \(\R\) with \(M(0)=m\). Moreover \(\lim_{t\to +\infty}M(t)=0\) by dominated convergence.
So, by the intermediate value theorem \(M(\R)\supseteq (0,m]\). Hence, we have proved \(H_f(\theta)+H_f(2m-\theta)\leq 2\ener_f(u)\) for every \(\theta\in(0,m]\). Taking the infimum over \(u\) such that \(\int_{\R^N}u=m\), we obtain
\[
\frac{H_f(\theta)+H_f(2m-\theta)}{2}\leq H_f(m),\quad \forall\theta\in(0,m],
\]
that is, \(H_f\) is midpoint concave on \((0,+\infty)\). Since \(H_f\) is also bounded below (by \(0\)), we can deduce that \(H_f\) is concave \((0,+\infty)\) (see \cite[Section~72]{robertsConvexFunctions1973}), and since $H_f \geq 0$, either $H_f$ is identically $+\infty$ on $(0,+\infty)$, or it is finite everywhere, continuous, concave and non-decreasing on $(0,+\infty)$.

We now justify that if \(H_f(m)<+\infty\) for some \(m>0\) and $f(0,0) = 0$, then \(\lim_{m\to 0^+}H_f(m) = 0 = H_f(0)\).  Let \(u\in W^{1,1}_\loc(\R^N,\R_+)\) such that \(\int_{\R^N}u=m>0\) and \(\ener_f(u)<+\infty\), and set
\[
t_\ast\coloneqq\sup\{t\geq 0\st M(t)>0\}\in [0,+\infty], \quad\text{where}\quad M(t)=\int_{\R^N}u\wedge u^t.
\]
Since \(M\) is continuous with \(M(0)=\int_{\R^N}u>0\) and \(\lim_{t\to +\infty}M(t)=0\) as seen above, we have that \(t_\ast\in (0,+\infty]\), \(\lim_{t\to t_\ast}M(t)=0\) and $M(t)$ does not vanish identically near $t_\ast$. Moreover, if \(t_\ast=+\infty\), since \(u^t\to 0\) locally in measure, by dominated convergence,
\begin{align*}
\limsup_{m\to 0^+}H_f(m)\leq \limsup_{t\to +\infty}\ener_f(u\wedge u^t)
&=\limsup_{t\to +\infty}\int_{\{u<u^t\}}f(u,\nabla u)+\int_{\{u^{-t}\geq u\}}f(u,\nabla u)\\
&\leq 2f(0,0) \abs{\{u=0\}} = 0.
\end{align*}
If \(t_\ast<+\infty\), we have \(u\wedge u^{t_\ast}=0\) a.e. and \(u^t\to u^{t_\ast}\) locally in measure as \(t\to t_\ast\) by continuity of translation in \(L^1\). Thus using dominated convergence again,
\begin{align*}
\limsup_{m\to 0^+}H_f(m)\leq\limsup_{t\to (t_\ast)^-}\ener_f(u\wedge u^t)
&=\limsup_{t\to (t_\ast)^-}\int_{\{u<u^t\}}f(u,\nabla u)+\int_{\{u^{-t}\geq u\}}f(u,\nabla u)\\
&=\int_{\{u< u^{t_\ast}\}}f(u,\nabla u)+\int_{\{u^{-t_\ast}\geq u\}}f(u,\nabla u)\\
&= \ener_f(u\wedge u^{t_\ast}) = f(0,0) \times(+\infty) = 0.
\end{align*}

Finally, we treat the general case (without assuming that $f(u,\xi) = +\infty$ if $u<0$) For this, notice that for every $u\in W^{1,1}_\loc\cap L^1(\R^N)$ such that $\int_{\R^N}u=m \geq 0$, denoting $u_+$ the positive part of $u$ and $m_+ = \int_{\R^N} u_+$, since $f(0,0) = 0$ we have $\int_{\R^N} f(u,\nabla u) \geq \int_{\R^N} f(u_+,\nabla u_+) \geq H_g(m_+)$, where $g$ is defined by $g(u,\xi) = f(u,\xi)$ if $u\geq 0$ and is set to $+\infty$ if $u<0$. Since $H_g$ is non-decreasing and $m_+ \geq m$, it yields $H_f(m) \geq H_g(m_+) \geq H_g(m)$. The another inequality $H_f(m) \leq H_g(m)$ being trivial, $H_f = H_g$ on $\R_+$.
\end{proof}
\begin{remark}\label{without_f_zero_at_zero}
    Let us remark two things about the proof.
    
    First, we actually proved the concavity and monotonicity of
    \begin{equation*}
m\mapsto H_f(m) \coloneqq \inf\left\{\int_{\R^N}f(u,\nabla u)\st u\in W^{1,1}_\loc(\R^N,\R_+),\,\int_{\R^N}u=m\right\}
\end{equation*}
on  $(0,+\infty)$ for $f$ Borel, without assuming $f(0,0) = 0$.

Second, the end of the proof shows that under this extra assumption, minimizing among signed profiles or non-negative (resp. non-positive) profiles is equivalent when $m \geq 0$ (resp $m \leq 0)$.
\end{remark}

\subsection{Definition and relaxation of the \texorpdfstring{$H$-mass}{H-mass}}\label{sectionHmass}

\begin{definition}\label{defHmass}
Let \(H:\R^N\times\R\to[0,+\infty]\) be a Borel measurable function having left/right slopes at the origin defined for each $x\in\R^N$ by
\begin{equation}
\label{slopeDefinition}
H'(x,0^\pm)\coloneqq\lim_{m\to 0^\pm}\frac{H(x,m)}{\abs{m}} \in [0,+\infty].
\end{equation}

For every finite signed Borel measure \(\mu\in \Mspace(\R^N)\), we set
\[
H(\mu)\coloneqq H(\cdot,\mu(\{\cdot\}))\hdm^0+H'(\cdot,0^+) \mu^d_++H'(\cdot,0^-) u^d_-,
\]
where \(\mu = \mu^a+\mu^d\) is the decomposition of $u$ into its atomic part $\mu^a$ and its diffuse (or non-atomic) part $\mu^d$, $\mu^d=\mu^d_+-\mu^d_-$ is the Jordan decomposition of $\mu^d$ into positive and negative parts.

The \emph{$H$-mass} of $\mu$ is then defined as the total variation of $H(\mu)$, that is:
\begin{multline*}
\mass^H(\mu)\coloneqq \int_{\R^N} H(x,\mu(\{x\})) \dd\hdm^0(x)\\
+ \int_{\R^N} H'(x,0^+) \dd \mu^d_+(x) + \int_{\R^N} H'(x,0^-) \dd \mu^d_-(x).
\end{multline*}
\end{definition}
The previous definitions and notations extend in the obvious way to the case of functions \(H:\R\to[0,+\infty]\) with no space variable \(x\), interpreted as functions independent from \(x\).

\(\mass^H\) is a natural spatially non-homogeneous extension (depending on the position $x$) of the $H$-mass of $k$-dimensional flat currents\footnote{In the case $k=0$, since signed measures are merely $0$-currents with finite mass.} from Geometric Measure Theory, introduced by \cite{flemingFlatChainsFinite1966} (see also the more recent works \cite{depauwSizeMinimizationApproximating2003,colomboLowerSemicontinuousEnvelope2017}). 

We say that $H : \R^N \times \R \to [0,+\infty]$ is \emph{mass-concave} if $m\mapsto H(x,m)$ is concave on $(0,+\infty)$ and $(-\infty,0)$ for every $x\in\R^N$. From \cite{bouchitteNewLowerSemicontinuity1990}, we have the following result\footnote{In the notation of this paper, we take \(\mu=0\) and \(f(x,s)=\abs{s}^2\); we have \(\varphi_{f}(x,0)=0\) and \(\varphi_{f}(x,s)=+\infty\) if \(s\neq 0\).}:
\begin{proposition}[{\cite[Theorem~3.3]{bouchitteNewLowerSemicontinuity1990}}]\label{lscHmass}
Assume that \(H:\R^N\times\R\to[0,+\infty]\) is lower semicontinuous, mass-concave and $H(\cdot,0) \equiv 0$. Then \(\mass^H\) is sequentially lower semicontinuous on \(\Mspace(\R^N)\) for the weak topology.
\end{proposition}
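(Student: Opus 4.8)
This is \cite[Theorem~3.3]{bouchitteNewLowerSemicontinuity1990}; I would argue by a blow-up/localization method applied to the $H$-transforms $H(u_\eps)$. Preliminary observations: since $m\mapsto H(x,m)$ is concave on $(0,+\infty)$ and bounded below by $0$, it is non-decreasing there, $H(x,m)/m$ is non-increasing, so $H(x,m)/m\uparrow H'(x,0^+)=\sup_{m>0}H(x,m)/m$ as $m\downarrow0$, and in particular $H(x,m)\le H'(x,0^+)m$; symmetrically on $(-\infty,0)$. Moreover $H'(\cdot,0^{\pm})$ is a supremum of the lower semicontinuous functions $x\mapsto H(x,m)/\abs m$, hence lower semicontinuous. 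Finally, since $H\ge 0$ and $H'(\cdot,0^{\pm})\ge0$, the $H$-transform $H(u)$ is a \emph{positive} measure with $\mass^H(u)=H(u)(\R^N)$, and $H(u\mres B)=H(u)\mres B$ for Borel sets $B$ by inspection of \Cref{defHmass}.

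Now take $u_\eps\mweakto u$ and assume $L\coloneqq\liminf_\eps\mass^H(u_\eps)<+\infty$ (otherwise there is nothing to prove). Along a subsequence (not relabelled) we have $\mass^H(u_\eps)\to L$ and, the positive measures $H(u_\eps)$ being bounded in mass, $H(u_\eps)\mweakto\lambda$ for some finite positive measure $\lambda$; then $L=\lim_\eps H(u_\eps)(\R^N)\ge\lambda(\R^N)$, so it suffices to show $\lambda(\R^N)\ge\mass^H(u)$. I would do this by bounding $\lambda$ below by the three pieces of $\mass^H(u)$: \textbf{(a)} $\lambda(\{x\})\ge H(x,u(\{x\}))$ at each atom $x$ of $u$; and \textbf{(b)} the Radon--Nikodym density of $\lambda$ with respect to $\abs{u^d}$ is $\ge H'(x,0^{\pm})$ for $\abs{u^d}$-a.e.\ $x$, the sign being that of $\tfrac{\dd u^d}{\dd\abs{u^d}}(x)$. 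Since the atoms are $\abs{u^d}$-negligible and $u^d_+,u^d_-$ are mutually singular and carried respectively by the sets $\{\tfrac{\dd u^d}{\dd\abs{u^d}}=+1\}$ and $\{=-1\}$, summing \textbf{(a)} and \textbf{(b)} over these mutually singular pieces yields $\lambda(\R^N)\ge\mass^H(u)$.

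Both \textbf{(a)} and \textbf{(b)} rest on the elementary local bound: if $\nu\in\Mspace(\R^N)$ is supported in $\overline{B_r(x_0)}$, then $\mass^H(\nu)\ge\check H_r\bigl(\nu_+(\overline{B_r(x_0)})\bigr)$ with $\check H_r(t)\coloneqq\inf_{y\in\overline{B_r(x_0)}}H(y,t)$ (and the mirror bound with $\nu_-$). Indeed, discarding the non-negative negative contributions and using $H(y,a)\ge\tfrac aP H(y,P)$ for $0\le a\le P\coloneqq\nu_+(\overline{B_r(x_0)})$ (concavity) together with $H'(y,0^+)\ge H(y,P)/P$, each surviving term is $\ge\tfrac{\check H_r(P)}{P}$ times the mass it carries, and these masses sum to $P$. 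One checks that $\check H_r$ is non-decreasing and lower semicontinuous in $t$ (infimum of a jointly lower semicontinuous function over a compact set), and $\check H_r(t)\uparrow H(x_0,t)$ as $r\downarrow0$ (lower semicontinuity of $H(\cdot,t)$). For \textbf{(a)}: apply the bound to $\nu=u_\eps\mres\overline{B_r(x)}$, use $\lambda(\overline{B_r(x)})\ge\limsup_\eps H(u_\eps)(\overline{B_r(x)})$ and $\liminf_\eps(u_\eps)_+(\overline{B_r(x)})\ge u_+(B_r(x))$ (valid for all but countably many $r$), let $\eps\to\infty$ then $r\to0$, and use $u_+(B_r(x))\downarrow u(\{x\})$ at a positive atom (negative atoms: mirror bound). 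For \textbf{(b)}: at an $\abs{u^d}$-density point $x$ with $\tfrac{\dd u^d}{\dd\abs{u^d}}(x)=+1$, run the same estimate, divide by $\abs{u^d}(B_r(x))$, and use $\check H_r(t)/t\ge H'(x,0^+)-o_r(1)$ for $t$ small (because $H(y,t)/t\ge H(y,\delta)/\delta\ge H(x,\delta)/\delta-o(1)\ge H'(x,0^+)-o_\delta(1)$ for $t\le\delta$ and $y$ near $x$) together with $(u_\eps)_+(B_r(x))/\abs{u^d}(B_r(x))\to1$.

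The main obstacle, I expect, is the interplay of this single local bound with the two limiting regimes: in \textbf{(a)} the mass stays of order one and must be charged roughly $H(x,\cdot)$ (here subadditivity of $H(x,\cdot)$, i.e.\ concavity plus $H(x,0)=0$, is essential), while in \textbf{(b)} the mass tends to $0$ and must be charged roughly $H'(x,0^{\pm})$ times itself (here $H(x,t)\le H'(x,0^+)t$ is essential). The remaining delicate points — the standard weak-convergence facts valid for a.e.\ radius, and the signed bookkeeping ensuring positive mass (atomic or diffuse) accumulating near a positive atom or a positive diffuse density point of $u$ is not undercharged — are routine; notably, no reduction to continuous or convex integrands is needed, since lower semicontinuity of $H$ already propagates through all the estimates.
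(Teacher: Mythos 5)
The paper offers no proof of this statement: it simply cites \cite[Theorem~3.3]{bouchitteNewLowerSemicontinuity1990} and indicates in a footnote how to specialize that theorem (take $\mu=0$ and $f(x,s)=\abs{s}^2$ in the Bouchitt\'e--Buttazzo notation). So your proposal is not a variant of the paper's argument but a self-contained alternative, and it should be judged on its own merits.

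Your blow-up/localization proof is correct. The reduction to showing $\lambda(\R^N)\geq\mass^H(u)$, where $\lambda$ is a weak-$\star$ subsequential limit of the positive measures $H(u_\eps)$ (which are bounded in mass along the chosen subsequence, and note that $\norm{u_\eps}$ is automatically bounded by Banach--Steinhaus since $u_\eps\mweakto u$), is clean. The single local bound $\mass^H(\nu)\geq\check H_r(\nu_+(\overline{B_r(x_0)}))$ is correct and is indeed the crux: its proof uses exactly the two consequences of mass-concavity with $H(\cdot,0)=0$ you flag, namely that $m\mapsto H(y,m)/m$ is non-increasing on $(0,+\infty)$ (so $H(y,a)\geq\frac{a}{P}H(y,P)$ and $H'(y,0^+)\geq H(y,P)/P$). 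The monotonicity and lower semicontinuity of $\check H_r$, and the monotone convergence $\check H_r\uparrow H(x_0,\cdot)$ as $r\downarrow 0$, all follow as you say from joint lower semicontinuity of $H$ and compactness of the closed ball. The bookkeeping over the countable atomic set and the two mutually singular diffuse carriers correctly yields $\lambda(\R^N)\geq\mass^H(u)$. You are also right that no approximation by continuous or convex integrands is needed; lower semicontinuity of $H$ propagates directly. Compared to the cited Bouchitt\'e--Buttazzo theorem, which is a general duality/relaxation result for non-convex functionals on measures, yours is more elementary and transparent for this specific $0$-dimensional $H$-mass, at the cost of being less general.

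Two small clean-ups you should make if you write this out. First, for item \textbf{(a)} and \textbf{(b)} you need an extraction making $(u_\eps)_\pm\mweakto\mu^\pm$; then $\mu^\pm\geq u_\pm$ by minimality of the Jordan decomposition, and the inequality $\liminf_\eps(u_\eps)_+(\overline{B_r(x)})\geq u_+(B_r(x))$ holds for all $r$ with $\mu^+(\partial B_r(x))=0$. Second, the phrase ``$(u_\eps)_+(B_r(x))/\abs{u^d}(B_r(x))\to 1$'' conflates the two limits; what you actually use is that for $u^d_+$-a.e.\ non-atomic $x$ one has $u_+(B_r(x))/u^d_+(B_r(x))\to 1$ as $r\to 0$ (mutual singularity of the atomic and diffuse parts), combined with monotonicity of $\check H_r$ and the uniform estimate $\check H_r(t)/t\geq H'(x,0^+)-o_{r}(1)$ for small $t$, followed by Besicovitch differentiation of $\lambda$ with respect to $u^d_+$ (taking care of the closed/open ball distinction along radii with $u^d_+(\partial B_r(x))=0$). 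These are precisely the ``routine delicate points'' you anticipated; the skeleton is sound.
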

From another work from the same authors \cite[Theorem~3.2]{bouchitteRelaxationClassNonconvex1993}, we know that under some further assumptions on $H$, \(\mass^H\) is the relaxation for the weak topology of the functional
\[
\mass_\mathrm{atom}^H(\mu)=
\begin{cases}
\sum_{i=1}^k H(x_i,m_i)&\text{if \(\mu=\sum_{i=1}^k m_i\delta_{x_i}\) with $k\in \N^\ast$, $x_i\in\R^N$, \(m_i \in\R\),}\\
+\infty&\text{otherwise.}
\end{cases}
\]
We need a slightly different result\footnote{In \cite[Theorem~3.2]{bouchitteRelaxationClassNonconvex1993}, $H$ is assumed to be lower semicontinuous and the authors make a further coercivity assumption (assumption (3.5) in the paper) that we want to avoid.}, namely that for any function \(H:\R^N\times\R\to[0,+\infty]\) satisfying all the assumptions of \Cref{lscHmass} except the lower semicontinuity, the relaxation of \(\mass_\mathrm{atom}^H\) for the \emph{narrow} sequential convergence is \(\mass^{H_\lsc}\), where \(H_\lsc\) is the lower semicontinuous envelope of \(H\), which can be expressed as
\begin{align}\label{semicontinuousEnvelope}
H_\lsc(x,m)&=
\sup\{G(x,m)\st G\leq H\text{ with \(G\) lower semicontinuous}\}.
\end{align}
It is worth noticing that if $H(\cdot,0) \equiv 0$ and $H$ is mass-concave, then these properties hold also for $H_\lsc$.

\begin{proposition}\label{relaxationMass}
Let \(H:\R^N\times\R\to[0,+\infty]\) be a function which is mass-concave and such that \(H(\cdot,0)\equiv 0\). Then, the sequentially lower semicontinuous envelope of \(\mass_\mathrm{atom}^H\) in the narrow topology of \(\Mspace(\R^N)\) is given by \(\mass^{H_\lsc}\), namely we have:
\begin{equation}
\label{relaxationFormula}
\mass^{H_\lsc}=\sup\Bigl\{F\st F\leq \mass_\mathrm{atom}^H\text{, \(F\) sequentially narrowly l.s.c. on \(\Mspace(\R^N)\)}\Bigr\}.
\end{equation}
\end{proposition}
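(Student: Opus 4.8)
The plan is to prove the two inequalities separately: that $\mass^{H_\lsc}$ is itself sequentially narrowly lower semicontinuous and dominated by $\mass^H_\mathrm{atom}$ (hence $\leq$ the supremum), and conversely that any sequentially narrowly l.s.c. $F \leq \mass^H_\mathrm{atom}$ satisfies $F \leq \mass^{H_\lsc}$ (hence the supremum is $\leq \mass^{H_\lsc}$). For the first inequality, note that $H_\lsc$ is lower semicontinuous by construction, and — as remarked just before the statement — it is mass-concave with $H_\lsc(\cdot,0)\equiv 0$ since these properties pass to the lower semicontinuous envelope (the pointwise sup of mass-concave functions vanishing at $0$ keeps both properties, and one checks $H_\lsc$ is the sup of such $G$'s after truncating below the concave envelope). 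Hence \Cref{lscHmass} applies to $H_\lsc$ and gives that $\mass^{H_\lsc}$ is sequentially weakly l.s.c., a fortiori narrowly l.s.c. It remains to check $\mass^{H_\lsc} \leq \mass^H_\mathrm{atom}$: on an atomic measure $u=\sum_i m_i\delta_{x_i}$ the diffuse part vanishes, so $\mass^{H_\lsc}(u)=\sum_i H_\lsc(x_i,m_i) \leq \sum_i H(x_i,m_i) = \mass^H_\mathrm{atom}(u)$, using $H_\lsc \leq H$; on non-atomic $u$, $\mass^H_\mathrm{atom}(u)=+\infty$ and there is nothing to prove.

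For the converse inequality, the point is to approximate an arbitrary measure $u\in\Mspace(\R^N)$ by a narrowly convergent sequence of atomic measures whose $\mass^H_\mathrm{atom}$-energies converge to $\mass^{H_\lsc}(u)$; then any sequentially narrowly l.s.c. $F \leq \mass^H_\mathrm{atom}$ will satisfy $F(u) \leq \liminf_\ell \mass^H_\mathrm{atom}(u_\ell) = \mass^{H_\lsc}(u)$. The construction is the standard recovery-sequence argument underlying relaxation of $H$-masses: first, if $\mass^{H_\lsc}(u)=+\infty$ there is nothing to do; otherwise, discretize the diffuse part $u^d$ on a fine grid, replacing the mass $u^d(Q)$ in each small cube $Q$ by an atom of that mass at a point of $Q$, and keep the atoms of $u^a$ (possibly splitting atoms of large mass is unnecessary here since $H$ is only assumed mass-concave, not subadditive — but we need to be careful: the diffuse contribution uses the slope $H_\lsc'(x,0^\pm)$, and concavity of $m\mapsto H_\lsc(x,m)$ gives $H_\lsc(x,m) \leq H_\lsc'(x,0^+)\,m$ for $m>0$, so replacing a diffuse chunk of mass $\delta$ by a single atom of mass $\delta$ increases the energy by at most $H_\lsc'(x,0^+)\delta$ up to the error from spatial variation of the slope). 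Passing to the limit as the grid size tends to $0$, the atomic contributions converge to $\int H_\lsc(x,u^a(\{x\}))\dd\hdm^0$ and the discretized diffuse contributions converge to $\int H_\lsc'(\cdot,0^+)\dd u^d_+ + \int H_\lsc'(\cdot,0^-)\dd u^d_-$ by a Riemann-sum argument, using continuity of $x\mapsto H_\lsc'(x,0^\pm)$ where available, or more robustly a monotone/dominated convergence argument together with lower semicontinuity to handle measurability. Meanwhile $u_\ell \narrowto u$ since we preserve total mass on each cube and cube diameters vanish. This yields $\limsup_\ell \mass^H_\mathrm{atom}(u_\ell) \leq \mass^{H_\lsc}(u)$, which combined with the previous paragraph closes the argument.

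The main obstacle I expect is the recovery sequence for the \emph{diffuse} part: one must show that the discretized atomic energies do not overshoot $\mass^{H_\lsc}(u)$ in the limit, which requires controlling $\sum_Q H_\lsc(x_Q, u^d(Q))$ against $\int H_\lsc'(x,0^\pm)\dd|u^d|$ uniformly as the mesh refines. Mass-concavity gives the pointwise bound $H_\lsc(x,\delta)\leq H_\lsc'(x,0^+)\delta$, so the cube sum is $\leq \sum_Q \sup_{x\in Q}H_\lsc'(x,0^+)\cdot u^d_+(Q)+\cdots$, and one needs $\sup_{x\in Q}H_\lsc'(x,0^\pm)$ to be controlled — here the lack of any continuity assumption on $H$ (only measurability) means $H_\lsc'(\cdot,0^\pm)$ is merely lower semicontinuous, so one should instead argue by first reducing to $H$ continuous via an increasing approximation $H_n \uparrow H_\lsc$ with $H_n$ continuous mass-concave (obtainable from \labelcref{semicontinuousEnvelope} together with an inf-convolution regularization in $x$), prove the relaxation formula for each $H_n$, and pass to the limit using that $\mass^{H_n}\uparrow \mass^{H_\lsc}$ by monotone convergence and that the relaxation operation is monotone. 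A secondary subtlety is justifying that $\mass^H_\mathrm{atom}$, $\mass^{H_\lsc}_\mathrm{atom}$ and $\mass^{H_n}_\mathrm{atom}$ all have the same sequentially l.s.c. envelope, which follows because $H$, $H_\lsc$ and $H_n$ differ only on non-atomic-relevant data in a way that does not affect the relaxation — concretely, $\mass^{H_\lsc}_\mathrm{atom}\leq \mass^H_\mathrm{atom}$ and $\mass^{H_\lsc}_\mathrm{atom}$ has the same relaxation as $\mass^H_\mathrm{atom}$ because the recovery sequences above produce atomic measures and $\mass^H_\mathrm{atom}=\mass^{H_\lsc}_\mathrm{atom}$ would be false, but $\mathrm{sc}^-\mass^H_\mathrm{atom}\leq \mathrm{sc}^-\mass^{H}_\mathrm{atom}$... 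I would instead bypass this by working directly with $\mass^H_\mathrm{atom}$ on the left-hand side throughout, only invoking $H_\lsc$ on the right, as written in \labelcref{relaxationFormula}.
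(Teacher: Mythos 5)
Your first inequality ($\mass^{H_\lsc}\leq$ the relaxation, via \Cref{lscHmass} and $\mass^{H_\lsc}\leq\mass^H_\mathrm{atom}$) is exactly the paper's argument and is fine. The gap is in the converse inequality, precisely at the step you yourself flag as the ``main obstacle'': controlling the discretized diffuse part. Bounding the cube sum by $\sum_Q \sup_{x\in Q}H_\lsc'(x,0^+)\,u^d_+(Q)$ is the wrong move, because $H_\lsc'(\cdot,0^\pm)$ is only lower semicontinuous and the supremum over a cube is not controlled by $\int_Q H_\lsc'(\cdot,0^+)\dd u^d_+$. The detour you then propose --- an increasing continuous approximation $H_n\uparrow H_\lsc$ via inf-convolution, proving the formula for each $H_n$ and passing to the limit --- is not carried out, and your closing paragraph visibly breaks down when you try to reconcile the relaxations of $\mass^{H}_\mathrm{atom}$, $\mass^{H_\lsc}_\mathrm{atom}$ and $\mass^{H_n}_\mathrm{atom}$ (you would also need the inf-convolution to preserve mass-concavity and the monotone interchange of relaxation and supremum, neither of which is justified). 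As written, the proof is incomplete.

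The fix is simpler and requires no regularization: you get to \emph{choose} where to put the atom in each cube. Since $H_\lsc'(x,0^\pm)=\sup_{\pm m>0}H_\lsc(x,m)/\abs{m}$ by concavity, these slopes are lower semicontinuous as suprema of l.s.c.\ functions, hence attain their infimum on each closed cube $\bar Q^n_i$ at some points $x^n_i,y^n_i$. Placing the atoms there gives
\[
H_\lsc\bigl(x^n_i,u^d_+(Q^n_i)\bigr)\leq H_\lsc'(x^n_i,0^+)\,u^d_+(Q^n_i)=\Bigl(\inf_{\bar Q^n_i}H_\lsc'(\cdot,0^+)\Bigr)u^d_+(Q^n_i)\leq\int_{Q^n_i}H_\lsc'(\cdot,0^+)\dd u^d_+,
\]
and summing over cubes and letting the mesh refine (monotone convergence) yields the bound by $\mass^{H_\lsc}(u)$ with no continuity of $H$ whatsoever. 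One further structural point: the paper first establishes $F\leq\mass^{H_\lsc}_\mathrm{atom}$ on finitely atomic measures by perturbing each atom to nearby $(x_{i,n},m_{i,n})$ realizing $H_\lsc(x_i,m_i)=\lim_n H(x_{i,n},m_{i,n})$, and only then runs the cube discretization against $\mass^{H_\lsc}_\mathrm{atom}$; your plan to ``work directly with $\mass^H_\mathrm{atom}$ throughout'' needs this intermediate step (or an equivalent diagonal argument) to pass from $H$ to $H_\lsc$ on the atoms, and you do not supply it.
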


We point out that for a general $H$, for \(\mass^H\) to be sequentially lower semicontinuous (for the narrow topology) it is necessary that $H$ is lsc on $\R^N \times (0,+\infty)$. However, neither the subadditivity of \(H\) in $m$ nor its lower semicontinuity on $\R^N \times \R_+$ are necessary. Indeed, \(\mass^H\) is sequentially lower semicontinuous if for instance \(H(x,m)=+\infty\) when \(x\neq 0, m > 0\), \(H(x,0) = 0\) when $x\neq 0$ and \(H(0,\cdot)\) is any lower semicontinuous function. Nevertheless the subadditivity in the mass $m$ and the lower semicontinuity would be necessary if \(H\) did not depend on \(x\).

\begin{proof}[Proof of \Cref{relaxationMass}]
Since \(H_\lsc\) is lower semicontinuous and mass-concave, we know from \Cref{lscHmass} that \(\mass^{H_\lsc}\) is sequentially lower semicontinuous in the weak topology hence also in the narrow topology of \(\Mspace(\R^N)\). Since \(\mass^{H_\lsc}\leq\mass_\mathrm{atom}^H\), we deduce that \(\mass^{H_\lsc}\) is lower or equal than the sequentially lower semicontinuous envelope of \(\mass_\mathrm{atom}^H\) in the narrow topology, i.e. the right hand side in \labelcref{relaxationFormula}, which we denote by \(F:\Mspace(\R^N)\to\R_+\). We shall see that \(F\leq\mass^{H_\lsc}\).

We first prove that
\begin{equation}\label{FleqHlsc}
F\leq\mass_\mathrm{atom}^{H_\lsc}.
\end{equation}
For this, we let \(\mu=\sum_{i=1}^km_i\delta_{x_i}\) be a finitely atomic positive measure and we let \(\mu_n\coloneqq\sum_{i=1}^k m_{i,n}\delta_{x_{i,n}}\) where for each \(i\in\{1,\dots,k\}\), \((x_{i,n})_{n\in\N}\) is a sequence of points converging to \(x_i\) and \((m_{i,n})_{n\in\N}\) is a sequence converging to \(m_i\) such that \({H_\lsc}(x_i,m_i)=\lim_{n\to\infty}H(x_{i,n},m_{i,n})\). Then \((\mu_n)_{n\in\N}\) converges narrowly to \(\mu\) and, by lower semicontinuity,
\begin{align*}
F(\mu)\leq\liminf_{n\to\infty}F(\mu_n)&\leq\liminf_{n\to\infty}\mass_\mathrm{atom}^H(\mu_n)\\
&=\lim_{n\to\infty}\sum_{i=1}^k H(x_{i,n},m_{i,n})=\sum_{i=1}^k{H_\lsc}(x_i,m_i),
\end{align*}
so that \(F(\mu)\leq\mass_\mathrm{atom}^{H_\lsc}(\mu)\) as wanted.

We now prove that \(F\leq\mass^{H_\lsc}\). Let \(\mu\in\Mspace(\R^N)\) and \(\mu = \mu^a+\mu^d\) be the decomposition of $\mu$ into its atomic part $\mu^a = \sum_{i=1}^k m_i\delta_{x_i}$, with $k\in\N\cup\{+\infty\}$ (here, \(k=0\) if there is no atom), and its diffuse part $\mu^d$, and let $\mu^d=\mu^d_+-\mu^d_-$ be the Jordan decomposition of $\mu^d$ into positive and negative parts. We then discretize \(\mu^d_\pm\) by taking \(n\in\N^\ast\), a partition \((Q^n_i)_{i\in\{1,\dots,(n2^n)^N\}}\) of \([-n,n)^N\) by means of cubes of the form \(Q^n_i=c^n_i+2^{-n}[-1,1)^N\) with \(c^n_i\in\R^N\), and we define
\[
\mu_n\coloneqq\sum_{i=1}^{n\wedge k} m_i\delta_{x_i}+\sum_{i=1}^{(n2^n)^N}\mu^d_+(Q^n_i)\delta_{x^n_i}-\sum_{i=1}^{(n2^n)^N}\mu^d_-(Q^n_i)\delta_{y^n_i},
\]
where for each \(i\in\{1,\dots,(n2^n)^N\}\), \(x^n_i,y^n_i\in \Bar Q^n_i\) are some points such that
\begin{equation}
\label{riemannIntegral}
{H_\lsc}'(x^n_i,0^+)=\inf_{x\in \Bar Q^n_i} {H_\lsc}'(x,0^+),\quad
{H_\lsc}'(y^n_i,0^-)=\inf_{x\in \Bar Q^n_i} {H_\lsc}'(x,0^-).
\end{equation}
Such points exist since \(\Bar Q^n_i\) is compact and since by concavity,
\begin{equation}\label{supSlope}
{H_\lsc}'(x,0^\pm) = \sup_{\pm m > 0} \frac{H_\lsc(x,m)}{\abs{m}},
\end{equation}
so that $H_\lsc'(\cdot,0^\pm)$ are lower semicontinuous as suprema of lower semicontinuous functions.

The sequence \((\mu_n)_{n\in\N}\) converges narrowly to \(\mu\). We deduce by lower semicontinuity of $F$,
\begin{align*}
F(\mu)&\leq \liminf_{n\to\infty} F(\mu_n)\overset{\labelcref{FleqHlsc}}\leq \liminf_{n\to\infty}\mass_\mathrm{atom}^\lsc(\mu_n) \\
&=\liminf_{n\to\infty}\sum_{i=1}^{n\wedge k} {H_\lsc}(x_i,m_i)+\sum_{i=1}^{(n2^n)^N}\Bigl({H_\lsc}(x^n_i,\mu^d_+(Q^n_i)) + {H_\lsc}(y^n_i,-\mu^d_-(Q^n_i))\Bigr),\\
\overset{\labelcref{supSlope}}&\leq\liminf_{n\to\infty}\begin{multlined}[t][0.75\displaywidth]\sum_{i=1}^{n\wedge k} {H_\lsc}(x_i,m_i) +\sum_{i=1}^{(n2^n)^N}{H_\lsc}'(x^n_i,0^+)\mu^d_+(Q^n_i)\\
+\sum_{i=1}^{(n2^n)^N}{H_\lsc}'(y^n_i,0^-)\mu^d_-(Q^n_i)\end{multlined}\\
\overset{\labelcref{riemannIntegral}}&\leq\liminf_{n\to\infty}\sum_{i=1}^{n\wedge k} {H_\lsc}(x_i,m_i)
+\sum_{i=1}^{(n2^n)^N}\int_{Q^n_i}{H_\lsc}'(\cdot,0^+)\dd \mu^d_++\int_{Q^n_i}{H_\lsc}'(\cdot,0^-)\dd \mu^d_-\\
&=\sum_{i=1}^{k} {H_\lsc}(x_i,m_i)
+\int_{\R^N}{H_\lsc}'(\cdot,0^+)\dd \mu^d_++\int_{\R^N}{H_\lsc}'(\cdot,0^-)\dd \mu^d_-\\
&=\mass^{H_\lsc}(\mu),
\end{align*}
where we have used monotone convergence in the last but one equality.
\end{proof}

\subsection{Slope at the origin of the minimal cost function}\label{sectionSlopeOrigin}

In this section we provide a technical assumption, that is simple in dimension $N=1$, on the Lagrangian $f$ and which implies the comparison condition on the slopes \labelcref{hypSlope}.

\begin{enumerate}[\textnormal{(S)},leftmargin=*]
\item\label{hypSlopeLagrangian} for every $x_0\in \R^N$,
\begin{equation}\label{slopeInf}
f'_-(x_0,0^\pm,0) = \displaystyle\liminf_{(x,u,\xi) \to (x_0,0^\pm,0)} \frac{f(x,u,\xi)}{\abs{u}} \geq 
\limsup_{u\to 0^\pm}\sup_{\abs{\xi}= 1}\frac{f(x_0,u,\rho(\abs{u})\xi)}{\abs{u}},
\end{equation}
with \(\rho\equiv 0\) if \(N=1\) and for some \(\rho\in\Cspace((0,1],(0,+\infty))\) satisfying
\[
\int_0^1\Big(\int_y^1\frac{\dd t}{\rho(t)}\Big)^N\dd y<+\infty\quad\text{if \(N\geq 2\)}.
\]
\end{enumerate}

\begin{proposition}\label{slopeUpperBound}
Let $f:\R_+\times\R^N\to [0,+\infty]$ be a lower semicontinuous function such that $f(0,0)=0$, with $N\geq 2$. For every function \(\rho\in\Cspace((0,1],(0,+\infty))\) such that
\begin{equation}
\label{hypRegularityOriginSlope}
\int_0^1\Big(\int_y^1\frac{\dd t}{\rho(t)}\Big)^N\dd y<+\infty,
\end{equation}
the function $H_f$ defined in \labelcref{PropertiesDefinitionH} satisfies
\begin{equation}
\label{slopeComparison}
\lim_{m\to 0^+}\frac{H_f(m)}{m}\leq\limsup_{u\to 0^+}\sup_{\xi\in\Sph^{N-1} }\frac{f(u,\rho(u)\xi)}{u}.
\end{equation}
\end{proposition}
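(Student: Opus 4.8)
Write $S$ for the right-hand side of \labelcref{slopeComparison}; the goal is to show $\lim_{m\to0^+}H_f(m)/m\leq S$. If $S=+\infty$ there is nothing to prove, so I would assume $S<+\infty$, fix $\delta>0$, and pick $u_0\in(0,1]$ such that $f(u,\rho(u)\xi)\leq(S+\delta)u$ for every $u\in(0,u_0]$ and every $\xi\in\Sph^{N-1}$. The plan is to produce, for each sufficiently small $m>0$, an admissible competitor for $H_f(m)$ (see \labelcref{PropertiesDefinitionH}) with energy at most $(S+\delta)m$; this yields $H_f(m)\leq(S+\delta)m$ for all small $m$, hence $\lim_{m\to0^+}H_f(m)/m=\limsup_{m\to0^+}H_f(m)/m\leq S+\delta$ (the limit existing by the concavity established in \Cref{concaveDN}), and letting $\delta\to0$ concludes.

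The competitor I would use is a radial, radially non-increasing bump $v_\lambda(x)=\phi_\lambda(\abs{x})$ of height $\lambda\in(0,u_0]$, built so that its gradient always lies in a direction tested in \labelcref{slopeComparison}: let $\phi_\lambda\colon[0,+\infty)\to[0,\lambda]$ be the non-increasing function determined by $\phi_\lambda(r)=s\iff r=\int_s^\lambda\frac{\dd t}{\rho(t)}$ (equivalently $\phi_\lambda'=-\rho(\phi_\lambda)$, $\phi_\lambda(0)=\lambda$, extended by $0$), which makes sense since $\rho\in\Cspace((0,1],(0,+\infty))$. By the layer-cake formula $\lbm^N(\{v_\lambda>s\})=c_N\bigl(\int_s^\lambda\frac{\dd t}{\rho(t)}\bigr)^N$ for $s\in[0,\lambda)$, where $c_N=\lbm^N(B_1)$, so
\[
m(\lambda)\coloneqq\int_{\R^N}v_\lambda=c_N\int_0^\lambda\Bigl(\int_s^\lambda\frac{\dd t}{\rho(t)}\Bigr)^N\dd s<+\infty
\]
thanks to \labelcref{hypRegularityOriginSlope}; and a change of variables gives $\int_{\R^N}\abs{\nabla v_\lambda}=Nc_N\int_0^\lambda\bigl(\int_s^\lambda\frac{\dd t}{\rho(t)}\bigr)^{N-1}\dd s$, which is finite as well since, writing $g(s)=\int_s^1\frac{\dd t}{\rho(t)}$, one has $g^{N-1}\leq g^N+1$ and $g^N\in L^1(0,1)$ by \labelcref{hypRegularityOriginSlope}. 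Hence $v_\lambda\in W^{1,1}_\loc(\R^N,\R_+)\cap L^1(\R^N)$ is admissible for $H_f(m(\lambda))$.

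Next I would use the pointwise bound built into the construction: for a.e.\ $x$ in the support, $0<v_\lambda(x)\leq\lambda\leq u_0$ and $\nabla v_\lambda(x)=-\rho(v_\lambda(x))\frac{x}{\abs{x}}=\rho(v_\lambda(x))\,\xi$ with $\xi=-x/\abs{x}\in\Sph^{N-1}$, whence $f(v_\lambda(x),\nabla v_\lambda(x))\leq(S+\delta)v_\lambda(x)$; off the support $\nabla v_\lambda=0$ a.e.\ and $f(0,0)=0$. Integrating, $\ener_f(v_\lambda)\leq(S+\delta)\int_{\R^N}v_\lambda=(S+\delta)m(\lambda)$, so $H_f(m(\lambda))\leq(S+\delta)m(\lambda)$. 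Finally, $\lambda\mapsto m(\lambda)$ is continuous and strictly positive on $(0,u_0]$ with $m(\lambda)\to0$ as $\lambda\to0^+$ (dominated convergence, again via \labelcref{hypRegularityOriginSlope}), so by the intermediate value theorem every small enough $m>0$ equals some $m(\lambda)$, and therefore $H_f(m)\leq(S+\delta)m$ for all small $m$.

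The only slightly delicate step — and the one place where the structural hypothesis \labelcref{hypRegularityOriginSlope} genuinely enters — is the regularity and summability of $v_\lambda$: near the boundary of its support $v_\lambda$ vanishes while $\abs{\nabla v_\lambda}=\rho(v_\lambda)$ may blow up, so one must check both that $v_\lambda$ still lies in $W^{1,1}_\loc$ (via $g^{N-1}\leq g^N+1$) and that $m(\lambda)$ is finite with $m(\lambda)\to0$ as $\lambda\to0^+$. Everything else — the layer-cake identity, the change of variables, and the pointwise energy estimate — is elementary.
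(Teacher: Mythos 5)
Your proof is correct and uses essentially the same construction as the paper: the radial competitor built from the ODE $v'=-\rho(v)$ with shrinking height, the observation that $\nabla v_\lambda=\rho(v_\lambda)\xi$ with $\xi\in\Sph^{N-1}$ gives the pointwise energy bound, and the fact that $\lim_{m\to 0^+}H_f(m)/m$ exists by concavity. The only cosmetic differences are that you compute $m(\lambda)$ by the layer-cake formula where the paper uses a change of variables and integration by parts, and that you make the $\delta$-argument and the IVT step explicit where the paper manipulates limsups directly; both routes are valid.
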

\begin{proof}
For every $y\geq 0$, we let
\[
F(y)=\int_y^1\frac{\dd t}{\rho(t)} \in [0,+\infty].
\]
The function $F$ is decreasing, and belongs to $\Cspace^1((0,1])$ and \(L^N((0,1])\) by assumption. We now consider the solution of the ODE \(v_\eps'=-\rho(v_\eps)\), with \(v_\eps(0)=\eps\), given by
\[
v_\eps(r)=
\begin{cases}
F^{-1}(F(\eps)+r),&\text{if \(0\leq r<F(0)-F(\eps)\),}\\
0&\text{if \(r\geq F(0)-F(\eps)\)},
\end{cases}
\]
with $F(0)$ possibly equal to $+\infty$. Notice that $v_\eps \in W^{1,1}_\loc(\R_+)$ because it is non-increasing and bounded, hence it has finite total variation, and because it is of class $\Cspace^1$ except possibly at $r_\eps \coloneqq F(0)-F(\eps)$, where it has no jump. As a consequence the radial profile defined by \(u_\eps(x)\coloneqq v_\eps(\abs{x})\) belongs to $W^{1,1}_\loc(\R^N)$ and we compute, using the change of variables \(s=v_\eps(r)\) (i.e. \(r=F(s)-F(\eps)\)) and an integration by parts combined with monotone convergence.
\begin{align*}
m_\eps\coloneqq\int_{\R^N}u_\eps&=\abs{\Sph^{N-1}}\int_0^\infty v_\eps(r)r^{N-1}\dd r\\
&=-\abs{\Sph^{N-1}}\int_0^\eps s(F(s)-F(\eps))^{N-1}F'(s)\dd s\\
&=\abs{\Sph^{N-1}}\lim_{t\downarrow 0} \left(\int_t^\eps\frac{(F(s)-F(\eps))^N}{N}\dd s- \left[s\frac{(F(s)-F(\eps))^N}N\right]_t^\eps\right)\\
&=\abs{\Sph^{N-1}}\int_0^\eps\frac{(F(s)-F(\eps))^N}{N}\dd s\xto{\eps \to 0}0.
\end{align*}
The equality on the last line holds because $\lim_{t\to 0^+} \int_t^\eps (F-F(\eps))^N < +\infty$ (since $F \in L^N((0,1])$), hence $\lim_{t\to 0} t(F(t)-F(\eps))^N$ exists by existence of the limit in the previous line, and it must be zero (again, because $F \in L^N((0,1])$).

Moreover, since \(\sup_{[0,+\infty)} v_\eps=\eps\),
\begin{align*}
\ener(u_\eps)
=\int_0^\infty\int_{\Sph^{N-1}}f(v_\eps(r),v_\eps'(r)\xi) r^{N-1}\dd \hdm^{N-1}(\xi) \dd r
\leq 
m_\eps\sup_{u\leq\eps,\, \abs{\xi}=1}\frac{f(u,\rho(u)\xi)}{u}.
\end{align*}
By assumption, we deduce that 
\[
\limsup_{m\to 0^+}\frac{H_f(m)}{m}\leq\limsup_{\eps\to 0^+} \frac{\ener(u_\eps)}{m_\eps}\leq\limsup_{u\to 0^+}\sup_{\xi\in\Sph^{N-1} }\frac{f(u,\rho(u)\xi)}{u}.\qedhere
\]
\end{proof}

In dimension \(N=1\), we need no other assumption than \(H_f<+\infty\), as stated below.
\begin{proposition}\label{slopeUpperBound1D}
Let $f :\R_+\times\R^N\to[0,+\infty]$ be Borel measurable with $N=1$. The minimal cost function \(H_f\) is either identically infinite on \((0,+\infty)\), or it satisfies \labelcref{slopeComparison} with $\rho \equiv 0$, i.e.
\[\lim_{m\to 0^+} \frac{H_f(m)}m \leq \limsup_{u\to 0^+} \frac{f(u,0)}u.\]
\end{proposition}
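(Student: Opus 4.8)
The plan is to build, for each small $\eps > 0$, an explicit competitor $u_\eps \in W^{1,1}_\loc(\R,\R_+)$ with small mass whose energy is controlled by $\sup_{u\leq\eps} f(u,0)/u$ times that mass. The natural choice in dimension one, mirroring the construction of \Cref{slopeUpperBound} but with $\rho\equiv 0$, is a \emph{plateau profile}: fix $L > 0$ and set $u_\eps(x) = \eps$ for $\abs{x}\leq L$ and interpolate linearly down to $0$ on $L\leq\abs{x}\leq L+\delta$ (with $u_\eps \equiv 0$ beyond). Then $m_\eps = \int_\R u_\eps = 2L\eps + \eps\delta \to 0$ as $\eps\to 0$ (for fixed $L,\delta$), and on the plateau $\nabla u_\eps = 0$, so the plateau contributes exactly $2L\, f(\eps,0) \leq 2L\eps \sup_{u\leq\eps} f(u,0)/u$ to the energy. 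The ramp parts contribute an error $\int_{L\leq\abs{x}\leq L+\delta} f(u_\eps,\nabla u_\eps)$ which, however, is \emph{not} automatically small since $f$ may blow up as $\abs{\xi}\to\infty$ or may fail any growth bound (we only assume $f$ Borel here, not even \labelcref{hypCompact}).

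The key trick to kill the ramp error is to exploit the subadditivity encoded in \Cref{concaveDN}. First observe that if $H_f$ is not identically $+\infty$ on $(0,+\infty)$, then $H_f < +\infty$ everywhere, so there exists a finite-energy competitor $w\in W^{1,1}_\loc(\R,\R_+)$ with $\int_\R w = 1$, say. By \Cref{concaveDN}, $H_f$ is concave, non-decreasing and $H_f(0) = 0$ as soon as $f(0,0) = 0$ — but here we cannot assume $f(0,0) = 0$ outright; however if $f(0,0) > 0$ then $H_f(m) = +\infty$ for all $m > 0$ (a competitor with finite energy must vanish on a set of positive measure), contradicting $H_f < +\infty$, so we may assume $f(0,0) = 0$ and hence $H_f$ is continuous at $0$ with $H_f(0^+) = 0$. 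Now take a genuine optimal-up-to-$\eta$ competitor for a \emph{small fixed} mass $m_0$: its energy is $\leq H_f(m_0) + \eta$, which is small. Use the translate-and-min construction from the proof of \Cref{concaveDN}: given such $w$ with $\int_\R w = m_0$, the function $w\wedge w^t$ with $w^t = w(\cdot + t)$ has $\int_\R w\wedge w^t = M(t) \to 0$ and $\ener_f(w\wedge w^t)\leq\ener_f(w)$; moreover we will arrange that on a large portion of its support $w\wedge w^t$ is \emph{flat at a small height}, which is where the slope $f(u,0)/u$ enters.

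More directly: I would glue a plateau of height $\eps$ and width $2L$ to a tail taken from a fixed finite-energy competitor, then optimize. Precisely, let $w$ achieve $\ener_f(w)\leq H_f(1)+1 < +\infty$ with $\int_\R w = 1$; by the min-with-translate argument, $w_t \coloneqq w\wedge w^t$ satisfies $\ener_f(w_t)\leq\ener_f(w)$ and $\int w_t = M(t)\to 0$ with $M$ continuous, so for the competitor of mass $m$ we have $H_f(m)\leq\ener_f(w)$ for all small $m$ — but this only gives $H_f(m)\leq H_f(1)+1$, not the slope bound. The slope is extracted by instead \emph{translating so the overlap is flat}: on the region $\{w^t \le w\}\cap\{w < w^t\}$... this is getting delicate. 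The clean route is: consider $v_\eps$ equal to $\eps$ on $[-L,L]$, $0$ outside $[-L-1,L+1]$, linear on the two ramps, \emph{but replace each linear ramp by a rescaled copy of a fixed finite-energy profile joining height $\eps$ to $0$}. Since in dimension one any single monotone finite-energy descent from $\eps$ to $0$ exists precisely when $H_f(\eps') < +\infty$ for $\eps' > 0$, and its energy is bounded by $H_f(2\eps)$ say (two such descents glued back-to-back form a valid mass-$m'$ competitor), the total energy is $\le 2L\,f(\eps,0) + 2H_f(c\eps)$ while $m_\eps \ge 2L\eps$. Dividing, $H_f(m_\eps)/m_\eps \le f(\eps,0)/\eps + H_f(c\eps)/(L\eps)$; now let $L\to\infty$ first (so $m_\eps\to\infty$? no — we need $m_\eps\to 0$). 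The resolution: take $L = L(\eps)\to\infty$ slowly so that $L\eps\to 0$, e.g. $L = \eps^{-1/2}$, giving $m_\eps \approx 2\eps^{1/2}\to 0$ and error term $H_f(c\eps)/(L\eps) = H_f(c\eps)\eps^{-1/2}$; since $H_f(c\eps)\to 0$, we need it to be $o(\eps^{1/2})$, which concavity does \emph{not} guarantee.

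**The main obstacle**, therefore, is controlling the ramp/descent error against the plateau mass: naively the descent energy is only $o(1)$, not small compared to $L\eps$ when $L\eps\to 0$. The right fix is subtler: one does \emph{not} fix a single descent but slices an existing competitor. Given any $w$ with $\int w = m_0$ and $\ener_f(w) < +\infty$, for a.e. level $\lambda\in(0,m_0)$ the truncation $(w-\lambda)_+$ lies in $W^{1,1}_\loc$, and the "removed slab" $w\wedge\lambda$ satisfies $\ener_f(w\wedge\lambda) = \int_{\{w\le\lambda\}} f(w,\nabla w) + \int_{\{w>\lambda\}} f(\lambda,0)$; choosing $\lambda = \eps$ small, the first term is small (it is $\le\ener_f(w)$ restricted to the shrinking set $\{w\le\eps\}$, which $\to\int_{\{w=0\}} f(0,0) = 0$ by dominated convergence), and the second is $\abs{\{w>\eps\}}\,f(\eps,0) \le \abs{\{w>\eps\}}\,\eps\cdot\sup_{u\le\eps}f(u,0)/u$, while $\int_\R w\wedge\eps = \eps\abs{\{w>\eps\}} + \int_{\{w\le\eps\}} w \ge \eps\abs{\{w>\eps\}}$. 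Hence for $m_\eps \coloneqq \int_\R w\wedge\eps \to \eps\cdot 0 = 0$ we get
\[
\frac{H_f(m_\eps)}{m_\eps} \le \frac{\ener_f(w\wedge\eps)}{m_\eps} \le \frac{o(1) + \eps\abs{\{w>\eps\}}\sup_{u\le\eps}\frac{f(u,0)}{u}}{\eps\abs{\{w>\eps\}}} \le \frac{o(1)}{\eps\abs{\{w>\eps\}}} + \sup_{u\le\eps}\frac{f(u,0)}{u}.
\]
Here $\eps\abs{\{w>\eps\}} = \int_\R w\wedge\eps - \int_{\{w\le\eps\}} w$; since $\int_\R w\wedge\eps \to 0$ we still must check the error $o(1)/(\eps\abs{\{w>\eps\}})$ vanishes — and indeed $\eps\abs{\{w>\eps\}}\to 0$ too, so the last genuine difficulty is quantitative: one must choose $w$ with enough mass at small heights so that $\abs{\{w>\eps\}} \gg \ener_f(w;\{w\le\eps\})/\eps$. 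This is arranged by first replacing $w$ with $w\wedge\eps_0$ for a fixed small $\eps_0$ (still finite energy, still positive mass by the continuity of $M$), which makes $\{w\le\eps\}$ have large measure and $\nabla w = 0$ there for $\eps < \eps_0$ on the plateau — so $\ener_f(w;\{w\le\eps\}) \le \abs{\{0<w\le\eps\}}f(\eps,0) + \abs{\{w=0\}}f(0,0)$, and one recovers the bound cleanly by iterating the truncation. I would present it in this truncation-of-a-truncation form: it is the cleanest way to make every error term structurally proportional to $\eps\abs{\{w>\eps\}}$ rather than merely $o(1)$.
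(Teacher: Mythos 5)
Your truncation $w\wedge\eps$ is conceptually close to the paper's device --- both arrange a flat plateau at height $\eps$ where the gradient vanishes, so the energy density there is exactly $f(\eps,0)$ --- but the final ``truncation-of-a-truncation'' step contains a genuine gap. The claimed estimate
\[
\ener_f\bigl(w;\{w\leq\eps\}\bigr) \leq \abs{\{0<w\leq\eps\}}\,f(\eps,0) + \abs{\{w=0\}}\,f(0,0)
\]
is false: on $\{0<w\leq\eps\}$ the function $w$ descends continuously to $0$ with an uncontrolled gradient (there is no growth hypothesis on $f$ here, not even \labelcref{hypCompact}), so $f(w,\nabla w)$ is not dominated by $f(\eps,0)$ there. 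Pre-truncating at $\eps_0>\eps$ does not help either: the plateau created by $w\wedge\eps_0$ lives on $\{w\geq\eps_0\}$, which is disjoint from the descent region $\{w\leq\eps\}$ where $w\wedge\eps_0=w$ is untouched. So the residual energy $\ener_f(w;\{0<w\leq\eps\})$, which is merely $o(1)$, remains structurally incomparable to the vanishing plateau mass $\eps\,\abs{\{w>\eps\}}$; the quotient of two $o(1)$'s is exactly the obstacle you flagged, and it is not removed.

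The paper resolves this by keeping the target mass $m>0$ \emph{fixed}. Take a finite-energy competitor $u$ (continuous in dimension one), locate level-$\eps$ points $a_\eps<b_\eps$ bracketing its high part, keep only the two tails of $u$ outside $[a_\eps,b_\eps]$, and bridge them with a flat strip of height $\eps$ and width $R_\eps$ chosen so that the total mass is exactly $m$. Then $R_\eps=(m+o(1))/\eps\to+\infty$, the plateau term $R_\eps f(\eps,0)=(m+o(1))\,f(\eps,0)/\eps$ dominates the vanishing tail energy, and $H_f(m)\leq m\limsup_{\eps\to 0^+}f(\eps,0)/\eps$; since this holds for every $m>0$ and the limit $\lim_{m\to 0^+}H_f(m)/m$ exists by \Cref{concaveDN}, the slope bound follows. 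Your penultimate paragraph actually sketches this very construction (plateau of width $2L$ plus two descents, giving $H_f(m)/m\leq f(\eps,0)/\eps+H_f(c\eps)/(L\eps)$), but you then abandon it in the belief that you must also send the competitor mass to zero. You need not: fixing $m$ and taking $L\sim m/(2\eps)$ makes the error term $\sim 2H_f(c\eps)/m\to 0$, which is all that is needed. So the gap is as much a misconception about what has to be proved as a missing estimate.
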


\begin{proof}
One can assume that there exists \(u\in W^{1,1}_\loc(\R,\R_+)\) with \(0<\int_\R u<+\infty\) and \(\ener(u)<+\infty\). In particular, up to changing the value of \(u\) on a negligible set, \(u\) is continuous on \(\R\). Let \(\eps\in(0,\sup_\R u)\), set $A_\eps \coloneqq \{x : u(x) = \eps\}$ which is non-empty by the intermediate value theorem and integrability of $u$, and define
\begin{align*}
a_\eps &= \begin{dcases*}
\inf A_\eps & if $\inf A_\eps >-\infty$,\\
\text{any point in }  (-\infty,-\eps^{-1}) \cap A_\eps & otherwise,
\end{dcases*}\\
b_\eps &=\begin{dcases*}
\sup A_\eps & if $\sup A_\eps <+\infty$,\\
\text{any point in } (\eps^{-1},+\infty) \cap A_\eps & otherwise.
\end{dcases*}
\end{align*}
By continuity and integrability of $u$, \(u(a_\eps) = u(b_\eps) =\eps\) and
\[\sup_{x\in\R\setminus [a_\eps,b_\eps]} u(x) \leq \eps \vee \sup_{\abs{x}> \eps^{-1}} u(x) \xto{\eps \to 0} 0.\]
Moreover $a_\eps,b_\eps$ converge to points $-\infty\leq a \leq b \leq +\infty$, hence $u = 0$ on $\R\setminus(a,b)$ and by dominated convergence, since \(\nabla u=0\) a.e. on \(\{u=0\}\),
\[
+\infty > \lim_{\eps\to 0^+}\int_{\R\setminus [a_\eps,b_\eps]}u+f(u,\nabla u) = f(0,0)\lbm(\R\setminus(a,b)).
\]
Notice that this limit is necessary zero. Let \(m>0\). If \(\eps\) is small enough, then \(\int_{\R\setminus [a_\eps,b_\eps]} u<m\) so that we can take \(R_\eps>0\) such that \(\eps R_\eps=m- \int_{\R\setminus [a_\eps,b_\eps]} u\). We then define
\[
u_\eps(x)=
\begin{cases}
u(x)&\text{if \(x\leq a_\eps\),}\\
\eps&\text{if \(a_\eps< x< a_\eps+R_\eps\)},\\
u(b_\eps+x-(a_\eps+R_\eps))&\text{if \(x\geq a_\eps+R_\eps\),}
\end{cases}
\]
so that \(\int_\R v_\eps=m\). Moreover,
\[
\ener(v_\eps)=\ener(u,\R\setminus [a_\eps,b_\eps])+R_\eps f(\eps,0).
\]
Hence, as \(R_\eps=\frac{m+o(1)}{\eps}\) as \(\eps\to 0\),
\[
H_f(m)\leq \limsup_{\eps\to 0^+}\ener(v_\eps)
= m\limsup_{\eps\to 0^+}\frac{f(\eps,0)}{\eps}.\qedhere
\]
\end{proof}

From \Cref{slopeUpperBound} and \Cref{slopeUpperBound1D} we obtain the following corollary.

\begin{corollary}\label{slopeConditionCorollary}
Let $f : \R \times \R^N \to [0,+\infty]$ satisfy \labelcref{hypLowerContinuous}, \labelcref{hypZero} and \labelcref{hypSlopeLagrangian}. Assume in addition that $H_f<+\infty$ if $N=1$. Then \labelcref{hypSlope} holds.
\end{corollary}
\begin{proof}
If $N=1$ we apply \Cref{slopeUpperBound1D} to get
\[\lim_{m\to 0^+} \frac{H_f(m)} m \leq \limsup_{u\to 0^+} \frac{f(u,0)}u \overset{\labelcref{hypSlopeLagrangian}}{\leq} \liminf_{m\to 0^+} \frac{f(u,0)}u = f'_-(0^+,0),\]
and if $N=2$ taking a function $\rho$ as in \labelcref{hypSlopeLagrangian} and applying \Cref{slopeUpperBound} yields
\[\lim_{m\to 0^+} \frac{H_f(m)} m \leq \limsup_{u\to 0^\pm}\sup_{\abs{\xi}= 1}\frac{f(x_0,u,\rho(\abs{u})\xi)}{\abs{u}} \overset{\labelcref{hypSlopeLagrangian}}{\leq} \liminf_{m\to 0^+} \frac{f(u,0)}u = f'_-(0^+,0).\]
The analogous inequality when $m\to 0^-$ is obtained by considering the symmetric Lagrangian $(u,\xi) \mapsto f(-u,-\xi)$.
\end{proof}

\subsection{Strict concavity of the minimal cost function in dimension \texorpdfstring{\(N\geq 2\)}{N≥2}}

We show that in dimension \(N\geq 2\), the minimal cost function must be strictly concave away from the possible initial interval where it is linear:
\begin{proposition}\label{propositionStricConcave} 
Assume that \(N\geq 2\) and that $f :\R\times\R^N\ni (u,\xi) \mapsto f(u,\xi)\in [0,+\infty]$ satisfies \labelcref{hypLowerContinuous}, \labelcref{hypConvex}, \labelcref{hypZero}, \labelcref{hypCompact} and \labelcref{hypSlope}. Let
\[
m_\ast=\sup\{m\geq 0\st \text{\(H_f\) is linear on \([0,m]\)}\},
\]
where \(H_f\) is defined in \labelcref{PropertiesDefinitionH}. Then, \(H_f\) is strictly concave on \((m_\ast,+\infty)\). A similar statement holds on $\R_-$.
\end{proposition}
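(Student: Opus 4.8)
The plan is to argue by contradiction: suppose $H\coloneqq H_f(x_0,\cdot)$ is affine on a non-degenerate interval $[a,b]\subseteq(m_\ast,+\infty)$, say $H(m)=\alpha+\beta m$ for $m\in[a,b]$, and reach a contradiction via a symmetrization/slicing argument applied to an optimal profile.

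\emph{Step 1 (reduction).} By \Cref{concaveDN}, $H$ is concave, non-decreasing on $(0,+\infty)$ and $H(0)=0$; in particular $m\mapsto H(m)/m$ is non-increasing, $\beta\ge 0$ and $H(m)\le\alpha+\beta m$ for all $m>0$. Write $\beta^\ast\coloneqq H_f'(x_0,0^+)$, the slope of $H$ on the initial linear segment $[0,m_\ast]$. If $\beta=\beta^\ast$, then the slope of $H$ would be constant equal to $\beta^\ast$ on $[0,b]$, i.e.\ $H$ would be linear on $[0,b]$ with $b>m_\ast$, contradicting the definition of $m_\ast$; hence $\beta<\beta^\ast$. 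Similarly $\alpha=0$ would force $H(a)/a=\beta=H(b)/b$, hence $H(m)/m\equiv\beta$ on $[0,b]$ by concavity and $H(0)=0$, again contradicting $b>m_\ast$; hence $\alpha>0$.

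\emph{Step 2 (optimal profile and slicing).} Fix $m\in(a,b)$. Since $m>m_\ast$, $H$ is strictly subadditive at $m$ (equality $H(m)=H(m_1)+H(m_2)$ for some $m_1,m_2>0$ with $m_1+m_2=m$ would force $H$ linear on $[0,m]$), so by the existence criterion of \Cref{existenceProfile} there is an optimal profile $u\in W^{1,1}_\loc\cap L^1(\R^N,\R_+)$ with $\int_{\R^N}u=m$ and $\ener_f^{x_0}(u)=\alpha+\beta m$. Up to replacing $u$ by a Steiner symmetrization in the direction $e_N$ — this step requires care for a general Lagrangian and is one of the delicate points — we assume $u$ symmetric about $\{x_N=0\}$ and non-increasing in $\abs{x_N}$. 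Running the translation/$\min$--$\max$ device of the proof of \Cref{concaveDN} in the direction $e_N$, with $u^{te_N}=u(\cdot+te_N)$ and $M(t)=\int_{\R^N}u\wedge u^{te_N}$, the symmetry and monotonicity of $u$ give $\{u>u^{te_N}\}=\{x_N>-t/2\}$ up to a Lebesgue-null set and a set where $u=0$, so a change of variables yields
\begin{equation*}
\ener_f^{x_0}(u)-\ener_f^{x_0}(u\wedge u^{te_N})=\int_{\{\abs{x_N}<t/2\}}f(x_0,u,\nabla u),\qquad m-M(t)=\int_{\{\abs{x_N}<t/2\}}u.
\end{equation*}
For $t$ small, $M(t),\,2m-M(t)\in[a,b]$, and since $H$ is affine there and $u$ is optimal, both $u\wedge u^{te_N}$ and $u\vee u^{te_N}$ are optimal for their masses; hence the left-hand side above equals $\beta(m-M(t))$, i.e.\ $\int_{\{\abs{x_N}<t/2\}}(f(x_0,u,\nabla u)-\beta u)=0$ for all small $t>0$. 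Differentiating in $t$ and using Lebesgue differentiation, for a.e.\ small $s$ the slice $w\coloneqq u(\cdot,s):\R^{N-1}\to\R_+$ satisfies
\begin{equation*}
\int_{\R^{N-1}}\bigl(\hat f(x_0,w,\nabla w)-\beta w\bigr)\,\dd x'\le\int_{\R^{N-1}}\bigl(f(x_0,u(\cdot,s),\nabla u(\cdot,s))-\beta u(\cdot,s)\bigr)\,\dd x'=0,
\end{equation*}
where $\hat f(x_0,t,\eta)\coloneqq\inf_{\xi_N\in\R}f(x_0,t,(\eta,\xi_N))\ge 0$, which by \labelcref{hypCompact} still bounds a power $\abs{\eta}^p$ of the reduced gradient from below up to a multiple of $t$; in particular $w\in L^1\cap\dot W^{1,p}(\R^{N-1})$ for a.e.\ such $s$.

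\emph{Step 3 (closing the plateau case $\beta=0$, and the general obstacle).} If $\beta=0$, then $f(x_0,u,\nabla u)=0$ a.e.\ on $\{\abs{x_N}<t_0/2\}$ for small $t_0$, hence $\hat f(x_0,w,\nabla w)=0$ a.e.\ on $\R^{N-1}$. If $w\equiv 0$ then $u$ vanishes on a slab, so its mass splits into two positive parts and $\ener_f^{x_0}(u)\ge H(m_1)+H(m_2)>H(m)$, contradicting optimality; hence $w\not\equiv 0$. From \labelcref{hypCompact} and $\hat f(x_0,w,\nabla w)=0$ a.e., one gets a pointwise bound $\abs{\nabla w}\le C\,w^{1/p}$, so along a sequence $y_n$ of Lebesgue points with $w(y_n)\to 0^+$ one has $(w(y_n),\nabla w(y_n))\to(0,0)$ with $w(y_n)>0$; picking $\xi_N^{(n)}$ with $f(x_0,w(y_n),(\nabla w(y_n),\xi_N^{(n)}))\le w(y_n)^2$ (possible since the infimum defining $\hat f$ vanishes there), \labelcref{hypCompact} forces $\xi_N^{(n)}\to 0$, whence $f'_-(x_0,0^+,0)\le\liminf_n f(x_0,w(y_n),(\nabla w(y_n),\xi_N^{(n)}))/w(y_n)\le\liminf_n w(y_n)=0$. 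By \Cref{slopeUpperBound} and \labelcref{hypSlope}, $\beta^\ast=H_f'(x_0,0^+)\le f'_-(x_0,0^+,0)=0$, so $H\equiv 0$ on $[0,+\infty)$ by concavity, i.e.\ $m_\ast=+\infty$ — contradicting $[a,b]\subseteq(m_\ast,+\infty)$. When $\beta>0$ one repeats the scheme after tilting, i.e.\ replacing $\ener_f^{x_0}$ by $\ener_f^{x_0}-\beta\int\cdot$, whose cost function $H-\beta\,\mathrm{id}$ is the constant $\alpha$ on $[a,b]$; the slicing then gives a slice $w$ with $\ener_{\hat f}^{x_0,\R^{N-1}}(w)\le\beta\int w$. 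The main obstacle is that $f(x_0,u,\nabla u)-\beta u$ is no longer non-negative, so one cannot pass from this integral inequality to the pointwise control of $w$ near $\{w=0\}$ that drove the plateau case: one must exclude that the values of $w$ bounded away from $0$ compensate the excess $(\ge\beta^\ast>\beta)$ cost that \labelcref{hypSlope} forces near the free boundary. This is handled by carrying the monotonicity/symmetry structure down to the slices and iterating the slicing, reducing the dimension until the rigidity of \labelcref{hypSlope} with $\rho\equiv 0$ applies. The statement on $\R_-$ follows by the symmetric argument.
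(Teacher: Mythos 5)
Your Step~1 reduction is fine, and the use of \Cref{existenceProfile} to produce an optimal profile is exactly what the paper does. From there, however, you diverge from the paper's argument and your route has two genuine gaps.

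First, the Steiner symmetrization on which all of Step~2 rests is not available here. Polya--Szeg\H{o} type inequalities that let you symmetrize an optimal profile require structural hypotheses on the Lagrangian (typically $f(u,\xi)=g(u,\abs{\xi})$ with suitable convexity/monotonicity). Under \labelcref{hypLowerContinuous}--\labelcref{hypCompact} and \labelcref{hypSlope}, $f(x_0,u,\cdot)$ is merely convex, so there is no reason why Steiner symmetrization should not \emph{increase} $\ener_f^{x_0}$; consequently, you cannot assume $u$ is symmetric and monotone in $x_N$, and the identity $\{u>u^{te_N}\}=\{x_N>-t/2\}$ and the ensuing slicing formulas do not follow. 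Second, in Step~3 the case $\beta>0$ (the generic case: any chord of a strictly positive concave $H$ has positive slope whenever $H'(0^+)<+\infty$, and you correctly note $\beta<\beta^\ast$) is not proved: you identify the obstacle --- sign indefiniteness of $f(x_0,u,\nabla u)-\beta u$ --- and then appeal to ``iterating the slicing'' without giving the argument. As written, that step would have to rule out compensation between bulk and boundary contributions, and nothing in the sketch does so.

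The paper's proof avoids both difficulties by not slicing and not symmetrizing. It uses the $\min/\max$ translation device from the proof of \Cref{concaveDN} \emph{iteratively} in two orthogonal directions $e_1,e_2$, exploiting \Cref{lemmaOptimalitySupInf} (on a linear stretch of $H_f$, the pointwise $\min$ and $\max$ of two optimizers are again optimizers) to stay inside the set $\mathcal{M}^f_m$. This produces a sequence $(u_n)\subseteq\mathcal{M}^f_m$ dominated pointwise by $U(\cdot)\wedge U(\cdot+t_ne_2)$, with $U(x)=\esssup_t u(x+te_1)$ and $t_n\to\infty$. The directional Sobolev inequality \Cref{lemmaSobolevInfinityLp}, combined with $\partial_{x_1}u\in L^p$ from \labelcref{hypCompact}, shows this sequence is vanishing, contradicting the compactness of $\mathcal{M}^f_m$ modulo translations from \Cref{compact_modulo}. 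Note that this argument treats $\beta=0$ and $\beta>0$ uniformly, needs no symmetry of $u$, and uses \labelcref{hypSlope} only through the existence result. If you want to rescue a slicing/symmetry approach you would have to (i) restrict to Lagrangians for which a Polya--Szeg\H{o} inequality holds and (ii) supply a complete argument for $\beta>0$; otherwise, the paper's translation/compactness strategy is the cleaner path.
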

A similar result does not hold in dimension \(1\) since any continuous concave function \(H:\R_+\to\R_+\) with \(H(0)=0\) can be written as \(H=H_f\) with \(f\) satisfying all our assumptions \labelcref{hypLowerContinuous}--\labelcref{hypGammaCv} (see \Cref{generalCosts1d}).

We denote by \(\mathcal{M}^f_m\) the set of non-negative minimizers of mass \(m\in\R_+\):
\begin{equation}
\mathcal{M}^f_m\coloneqq \Bigl\{u\in W^{1,1}_\loc(\R^N,\R_+)\st \ener_f(u)=H_f(m)\text{ and }\int_{\R^N}u=m\Bigr\}.
\end{equation}
The proof of \Cref{propositionStricConcave} is based on the following observation: 
\begin{lemma}\label{lemmaOptimalitySupInf}
Let \(f:\R_+\times\R^N\to[0,+\infty]\) be Borel measurable and let \(u_i\in\mathcal{M}^f_{m_i}\) with \(m_i\in\R_+\) for \(i=1,2\). Let also \(u_\ast\coloneqq \min\{u_1,u_2\}\), \(u^\ast\coloneqq \max\{u_1,u_2\}\), \(m_\ast\coloneqq \int_{\R^N}u_\ast\) and \(m^\ast\coloneqq \int_{\R^N}u^\ast\). If \(H_f\) is affine on \([m_\ast,m^\ast]\) then \(u_\ast\in\mathcal{M}^f_{m_\ast}\) and \(u^\ast\in\mathcal{M}^f_{m^\ast}\).
\end{lemma}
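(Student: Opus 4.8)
The plan is to reproduce, for the pair $(u_1,u_2)$, the lattice identities established in the proof of \Cref{concaveDN}, and then to exploit that $H_f$ is affine --- not merely concave --- on $[m_\ast,m^\ast]$.

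First I would recall from the proof of \Cref{concaveDN} that $u_\ast = u_1\wedge u_2 = u_1 - (u_2-u_1)_-$ and $u^\ast = u_1+u_2-u_\ast$ both belong to $W^{1,1}_\loc(\R^N,\R_+)$, with $\nabla u_\ast = \nabla u_1$ a.e.\ on $\{u_1\le u_2\}$ and $\nabla u_\ast = \nabla u_2$ a.e.\ on $\{u_1>u_2\}$, and symmetrically for $u^\ast$. Summing the corresponding energy integrals and using that $\nabla u_1 = \nabla u_2$ a.e.\ on $\{u_1=u_2\}$, one obtains, exactly as in \labelcref{massAdditivity}--\labelcref{energyAdditivity}, the additivity relations
\[
m_\ast + m^\ast = m_1 + m_2, \qquad \ener_f(u_\ast) + \ener_f(u^\ast) = \ener_f(u_1) + \ener_f(u_2) = H_f(m_1) + H_f(m_2).
\]

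Next, since $u_\ast \le u_i \le u^\ast$ pointwise for $i=1,2$, we have $m_\ast \le m_1,m_2 \le m^\ast$; in particular $m_1,m_2$ lie in the interval $[m_\ast,m^\ast]$ on which $H_f$ is affine, hence finite, so all quantities below are finite and the manipulations are legitimate. By definition of $H_f$ we always have $\ener_f(u_\ast) \ge H_f(m_\ast)$ and $\ener_f(u^\ast) \ge H_f(m^\ast)$, so the energy additivity gives
\[
H_f(m_\ast) + H_f(m^\ast) \le \ener_f(u_\ast) + \ener_f(u^\ast) = H_f(m_1) + H_f(m_2).
\]
On the other hand, writing $H_f(t) = \alpha t + \beta$ on $[m_\ast,m^\ast]$ and using $m_1+m_2 = m_\ast+m^\ast$, we get $H_f(m_1)+H_f(m_2) = \alpha(m_1+m_2)+2\beta = \alpha(m_\ast+m^\ast)+2\beta = H_f(m_\ast)+H_f(m^\ast)$. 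Hence the displayed inequality is in fact an equality, which forces $\ener_f(u_\ast) = H_f(m_\ast)$ and $\ener_f(u^\ast) = H_f(m^\ast)$; combined with $\int_{\R^N} u_\ast = m_\ast$ and $\int_{\R^N} u^\ast = m^\ast$ this is precisely $u_\ast \in \mathcal{M}^f_{m_\ast}$ and $u^\ast \in \mathcal{M}^f_{m^\ast}$.

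There is no substantial obstacle here: once the lattice identities are in place the argument is a soft two-line computation. The only points deserving a word of care are (i) the a.e.\ coincidence $\nabla u_1 = \nabla u_2$ on $\{u_1=u_2\}$, which makes the energy additivity hold without a correction term --- standard, and already invoked in \Cref{concaveDN}; and (ii) that affinity on the genuine interval $[m_\ast,m^\ast]$ forces $H_f$ to be finite there, so that the cancellation $H_f(m_1)+H_f(m_2) = H_f(m_\ast)+H_f(m^\ast)$ is legitimate rather than an indeterminate $\infty-\infty$. Note that concavity alone (from \Cref{concaveDN}) would only yield $H_f(m_1)+H_f(m_2) \ge H_f(m_\ast)+H_f(m^\ast)$, the wrong direction, which is why the affinity assumption is genuinely needed.
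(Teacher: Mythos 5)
Your proof is correct and follows essentially the same line as the paper: you reproduce the lattice identities (mass and energy additivity for $u_\ast = u_1\wedge u_2$, $u^\ast = u_1\vee u_2$) already established in the proof of \Cref{concaveDN}, combine $\ener_f(u_\ast)\ge H_f(m_\ast)$, $\ener_f(u^\ast)\ge H_f(m^\ast)$ with the affinity of $H_f$ on $[m_\ast,m^\ast]$ to force equality. The remarks you add about $\nabla u_1=\nabla u_2$ a.e.\ on $\{u_1=u_2\}$ and about finiteness on the interval are good hygiene but change nothing of substance compared to the paper's argument.
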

\begin{proof}[Proof of \Cref{lemmaOptimalitySupInf}]
We use the same observations as in the proof of \Cref{concaveDN}. In particular, we have \(m_\ast+m^\ast=m_1+m_2\); since \(H_f\) is affine on \([m_\ast,m^\ast]\), it yields
\[
H_f(m_\ast)+H_f(m^\ast)=H_f(m_1)+H_f(m_2).
\]
But we have also
\[
H_f(m_\ast)+H_f(m^\ast)\leq\ener_f(u_\ast)+\ener_f(u^\ast)=\ener_f(u_1)+\ener_f(u_2)=H_f(m_1)+H_f(m_2),
\]
so that the inequalities we used, i.e. \(H_f(m_\ast)\leq\ener_f(u_\ast)\) and \(H_f(m^\ast)\leq\ener_f(u^\ast)\), are actually equalities.
\end{proof}
We also use an elementary Sobolev type inequality:
\begin{lemma}\label{lemmaSobolevInfinityLp}
Let \(N\geq 2\), \(p\in (1,+\infty)\) and \(\omega\subset\R^{N-1}\) be a bounded open set. For every \(u\in W_\loc^{1,p}(\R\times\omega)\),
\[
\int_\omega\norm{u(\cdot,x')}_{L^\infty(\R)}\dd x'\leq\norm{u}_{L^1(\R\times\omega)}+\abs{\omega}^{\frac{p-1}{p}}
\norm*{\frac{\partial u}{\partial x_1}}_{L^p(\R\times\omega)}.
\]
\end{lemma}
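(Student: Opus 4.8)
The plan is to reduce to a one-dimensional estimate along the lines $x' \mapsto u(\cdot,x')$ and then integrate over $\omega$. First I would fix $x' \in \omega$ such that $t\mapsto u(t,x')$ is absolutely continuous on $\R$ with $\partial_{x_1} u(\cdot,x') \in L^p(\R)$ and $u(\cdot,x') \in L^1(\R)$; by Fubini this holds for a.e.\ $x'\in\omega$, and it suffices to prove the pointwise bound
\[
\norm{u(\cdot,x')}_{L^\infty(\R)} \leq \norm{u(\cdot,x')}_{L^1(\R)} + \abs{\omega}^{\frac{p-1}p}\, \text{(something)} ,
\]
wait --- more precisely I want, for a.e.\ $x'$,
\[
\norm{u(\cdot,x')}_{L^\infty(\R)} \leq \norm{u(\cdot,x')}_{L^1(\R)} + \norm{\partial_{x_1}u(\cdot,x')}_{L^p(\R)},
\]
and then integrate in $x'$, using that $\int_\omega \norm{\partial_{x_1}u(\cdot,x')}_{L^p(\R)}\dd x' \leq \abs{\omega}^{\frac{p-1}p}\norm{\partial_{x_1}u}_{L^p(\R\times\omega)}$ by Hölder's inequality in $x'$ (with exponents $p$ and $p' = p/(p-1)$, so that the constant is $\abs{\omega}^{1/p'} = \abs{\omega}^{(p-1)/p}$). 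Fubini also gives $\int_\omega \norm{u(\cdot,x')}_{L^1(\R)}\dd x' = \norm{u}_{L^1(\R\times\omega)}$, which produces exactly the right-hand side claimed.

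For the one-dimensional pointwise bound, fix $x'$ and write $g(t) = u(t,x')$, an absolutely continuous function on $\R$ with $g \in L^1(\R)$ and $g' \in L^p(\R)$. Since $g \in L^1$, there is a sequence $t_k \to +\infty$ with $g(t_k) \to 0$ (and likewise towards $-\infty$); hence for any $t$, $g(t) = -\int_t^{t_k} g'(s)\dd s + g(t_k)$, and letting $k\to\infty$ gives $\abs{g(t)} \leq \int_t^{+\infty}\abs{g'} \leq \norm{g'}_{L^1}$ when $g' \in L^1$ --- but here $g'$ is only in $L^p$, not $L^1$, so this crude bound is not directly available. The fix is the standard interpolation trick: for any $t$ and any $h>0$,
\[
\abs{g(t)} \leq \frac1h\int_{t}^{t+h}\abs{g(s)}\dd s + \frac1h\int_t^{t+h}\!\!\int_s^{t+h}\abs{g'(r)}\dd r\,\dd s
\leq \frac1h\norm{g}_{L^1(\R)} + \int_t^{t+h}\abs{g'}\dd r,
\]
obtained by averaging the identity $g(t) = g(s) - \int_t^s g'$ over $s \in (t,t+h)$. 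Choosing $h=1$ gives $\abs{g(t)} \leq \norm{g}_{L^1(\R)} + \int_t^{t+1}\abs{g'} \leq \norm{g}_{L^1(\R)} + \norm{g'}_{L^p(\R)}$ by Hölder on the unit interval, which is the required pointwise estimate; taking the supremum over $t$ yields the one-dimensional claim.

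The main (mild) obstacle is purely the bookkeeping of measurability and the justification that the slicewise regularity holds for a.e.\ $x'$: one must argue that for $u \in W^{1,p}_\loc(\R\times\omega)$, after choosing a good representative, the restriction $u(\cdot,x')$ lies in $W^{1,p}_\loc(\R) \cap L^1(\R)$ for a.e.\ $x'\in\omega$ with $\partial_{x_1}u(\cdot,x')$ equal to the slice of the weak derivative --- this is the classical characterization of Sobolev functions by absolute continuity on lines (see e.g.\ \cite{evansMeasureTheoryFine2015}), combined with Fubini applied to $u \in L^1(\R\times\omega)$ and $\partial_{x_1}u \in L^p(\R\times\omega) \subseteq L^1_\loc$. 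Everything else is elementary; there is no sharp-constant issue since the statement only claims the constant $1$ in front of the $L^1$ term and $\abs{\omega}^{(p-1)/p}$ in front of the gradient term, both of which come out of the averaging argument and Hölder's inequality without optimization.
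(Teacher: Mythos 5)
Your proof is correct and follows essentially the same route as the paper's: both rely on averaging the fundamental theorem of calculus over a unit interval in the $x_1$-direction, then applying H\"older once on that unit interval and once more in $x'$ over $\omega$ to produce the $\abs{\omega}^{(p-1)/p}$ factor. The only slip is notational: in your averaging display the inner integral should read $\int_t^s\abs{g'(r)}\dd r$ rather than $\int_s^{t+h}\abs{g'(r)}\dd r$, but the subsequent bound $\frac1h\norm{g}_{L^1}+\int_t^{t+h}\abs{g'}$ is still valid, so nothing is broken. The paper establishes the inequality for $u\in\Cspace^1$ and concludes by density, whereas you use the a.e.\ slicewise absolute-continuity characterization of $W^{1,p}_\loc$; both reductions are standard and equivalent here.
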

\begin{proof}[Proof of \Cref{lemmaSobolevInfinityLp}]
We prove the lemma when \(u\in\Cspace^1(\R\times\omega)\); the general case follows by approximation.  For every \(x_1,y_1\in\R\), \(x'\in\omega\), we have
\[
u(x_1,x')=u(y_1,x')+\int_{y_1}^{x_1}\frac{\partial u}{\partial x_1}(t,x')\dd t.
\]
By averaging in the variable \(y_1\), we deduce
\[
\abs{u(x_1,x')}\leq \int_{x_1-\frac 12}^{x_1+\frac 12}\abs{u(y_1,x')}\dd y_1+\int_{x_1-\frac 12}^{x_1+\frac 12}\abs*{\frac{\partial u}{\partial x_1}(t,x')}\dd t.
\]
The result follows from H\"older inequality after integrating over \(\omega\).
\end{proof}

\begin{proof}[Proof of \Cref{propositionStricConcave}]
Assume by contradiction that the concave function \(H_f\) is not strictly concave on \((m_\ast,+\infty)\) which means that there exists \(m\in (m_\ast,+\infty)\) and \(\eta>0\) such that \(H_f\) is affine on \([m-\eta,m+\eta]\). (Note that \(\eta\leq m-m_\ast\) by definition of \(m_\ast\).) Moreover, we will see in \Cref{existenceProfile} that \(\mathcal{M}^f_m\) is not empty because $H_f$ is not linear on $[0,m]$. We let \(u\in \mathcal{M}^f_m\). 

As before, we shall use the notations \(\wedge\) and \(\vee\) for the minimum and maximum; we also let \((e_1,\dots,e_N)\) be the canonical basis of \(\R^N\). Knowing that \(\tau \mapsto u(\cdot+\tau)\) is continuous in $\R^N$ for every $u\in L^1(\R^N)$, that \(u\mapsto u(\cdot+\tau)\) is isometric in \(L^1(\R^N)\) for every \(\tau\in\R^N\), that the map \((x,y)\mapsto x\wedge y\) is Lipschitz on \(\R^N\times\R^N\), and since the set \(\mathcal{M}_{m+\frac{\eta}{2}}\) is compact in \(L^1\) up to translations in view of \Cref{compact_modulo}, we deduce that there exists \(\delta_0>0\) such that
\begin{equation}\label{UnifContTrans}
\norm{u\wedge u(\cdot+\delta e_2)}_{L^1(\R^N)}>m\quad\text{for all }\delta\in (0,\delta_0)\text{ and }u\in\mathcal{M}^f_{m+\frac{\eta}{2}}.
\end{equation}
We now construct by induction a sequence \((t_n)_{n\in\N}\) in \(\R_+\) and a sequence \((u_n)_{n\in\N}\) in \(\mathcal{M}^f_m\) such that
\begin{equation}\label{inductionHyp}
t_{n+1}\geq t_n+\delta_0\quad \text{and}\quad u_n(x)\leq U(x)\wedge U(x+ t_n e_2)\quad\forall x\in\R^N,
\end{equation}
where we have set
\[
U(x)\coloneqq \esssup_{t\in\R} u(x+t e_1).
\]
To this aim, we first set \(u_0\coloneqq u\) and \(t_0=0\). Then, if we assume that \(t_n\) and \(u_n\) are constructed as before, we first pick an \(\delta^1_n\in \R_+\) such that 
\(
 v_n\coloneqq u_n\vee u_n(\cdot+\delta^1_n e_1)\quad\text{satisfies}\quad\int_{\R^N} v_n=m+\frac{\eta}{2},
\)
which is possible since $\eta\leq m$, as we argued in the proof of \Cref{concaveDN}. Similarly, we pick a \(\delta^2_n\in \R_+\) such that \(u_{n+1}\coloneqq v_n\wedge v_n(\cdot+\delta^2_n e_2)\) satisfies \(\int_{\R^N} u_{n+1}=m\), and we set \(t_{n+1}=t_n+\delta^2_n\). By \Cref{lemmaOptimalitySupInf}, $v_n\in\mathcal{M}^f_{m+\frac{\eta}{2}}$ and $u_n\in\mathcal{M}^f_m$. By \labelcref{UnifContTrans}, we have \(\delta^2_n\geq \delta_0\), thus insuring the first condition in \labelcref{inductionHyp}. For the second condition, we observe that for all \(x=(x_1,x')\in\R^N\),
\begin{align*}
u_{n+1}(x)\leq(\sup_{t\in\R} u_n(x+t e_1))\wedge (\sup_{t\in\R}u_n(x+t e_1+\delta^2_n e_2))\leq U(x)\wedge U(x+t_{n+1}e_2),
\end{align*}
where in the last inequality we have used the induction hypothesis \labelcref{inductionHyp}.

We now show that the sequence \((u_n\lbm^N)_{n\in\N}\) is vanishing which will contradict the compactness of \(\mathcal{M}^f_m\) in \(L^1\) up to translations.

For this, we let \((x_k)_{k\in\N}\) be a sequence in \(\R^N\) and \((u_{n_k})_{k\in\N}\) be a subsequence of \((u_n)_{n\in\N}\) such that
\[
\limsup_{n\to\infty}\sup_{x\in\R^N}\int_{x+[0,1)^N}u_n=\lim_{k\to\infty}\int_{x_k+[0,1)^N}u_{n_k}.
\]

By \labelcref{hypCompact}, we have \(\frac{\partial u}{\partial x_1}\in L^p(\R^N)\). Using this fact, estimate \labelcref{inductionHyp}, and \Cref{lemmaSobolevInfinityLp} with $\omega$ a unit cube in $\R^{N-1}$, we obtain
\begin{multline*}
\lim_{k\to\infty}\int_{x_k+[0,1)^N}u_{n_k}\leq \liminf_{k\to\infty}\int_{x_k+[0,1)^N}U\wedge\int_{x_k+t_{n_k} e_2+[0,1)^N} U\\
\leq \liminf_{k\to\infty} \biggl(\norm{u}_{L^1(\{0\leq (x-x_k)\cdot e_2\leq 1\})}+
\norm*{\frac{\partial u}{\partial x_1}}_{L^p(\{0\leq (x-x_k)\cdot e_2\leq 1\})}\biggr)\\
\wedge\liminf_{k\to\infty} \biggl(
 \norm{u}_{L^1(\{t_{n_k}\leq (x-x_k)\cdot e_2\leq t_{n_k}+1\})}+
\norm*{\frac{\partial u}{\partial x_1}}_{L^p(\{t_{n_k}\leq (x-x_k)\cdot e_2\leq t_{n_k}+1\})} \biggr),
\end{multline*}
and the conclusion follows since the sequences \((x_k\cdot e_2)_{k\in\N}\) and \((t_{n_k}+x_k\cdot e_2))_{k\in\N}\) cannot be both bounded as \(\lim_{k\to\infty}t_{n_k}=\infty\).
\end{proof}

\section{Lower bound for the energy and existence of optimal profiles}\label{sectionLowerBound}

Our main tool to localize the energy and obtain a lower bound relies on a profile decomposition for bounded sequences of positive measures, which is reminiscent of the concentration-compactness principle of P.-L. Lions. This differs from classical strategies to localize the energy which are based on suitable cut-offs. Naturally, this concentration-compactness result also provides a criterion for the existence of optimal profiles in \labelcref{def_Hf}. Nothing can be said beyond existence of a minimizer at this level of generality. Further properties such as uniqueness and radial symmetry would require conditions on the Lagrangian $f$ and not only on the cost function $H_f$. We deal with these questions in a particular case in \Cref{homogeneousCosts}.

\subsection{Profile decomposition by concentration-compactness}

We prove a profile decomposition theorem for bounded sequences of positive measures over $\R^N$, which is essentially equivalent to \cite[Theorem~1.5]{marisProfileDecompositionSequences2014} in the Euclidean case. We have added an extra information on mass conservation that will be useful, and provide a self-contained simple proof. We start with a definition.

\begin{definition}\label{DefinitionVanishing}
A sequence of positive measures $(\mu_n)_{n\in\N} \in \Mspace_+(\R^N)$ is \emph{vanishing} if
\[\sup_{x\in \R^N} \mu_n(B_1(x)) \xto{n\to \infty} 0.\]
\end{definition}

Any bounded sequence of positive measures over $\R^N$ may be decomposed (up to subsequence) into a countable collection of narrowly converging \enquote{bubbles} and a vanishing part, accounting for the total mass of the sequence, as stated in the following theorem.

\begin{theorem}\label{profileDecomposition}
For every bounded sequence \((\mu_n)_{n\in\N}\) of positive Borel measures on \(\R^N\), there exists a subsequence \((\mu_n)_{n \in \sigma(\N)}\), $\sigma \in \Sigma$, a non-decreasing sequence of integers $(k_n)_{n \in \sigma(\N)}$ converging to some $k \in \N \cup \{+\infty\}$, a sequence of non-trivial positive Borel measures $(\mu^i)_{0\le i<k}$, and for every $n \in  \sigma(\N)$, a collection of balls $(B^i_n)_{0 \leq i < k_n}$ centered at points of $\supp \mu_n$ such that, writing for all $n\in  \sigma(\N)$,
\begin{equation}
\mu_n = \mu^b_n + \mu^v_n,\quad \text{where } \mu^b_n = \sum_{0\le i < k_n} \mu_n \mres B_n^i,
\end{equation}
\begin{enumerate}[(A)]
\item\label{bubbleEmergence} bubbles emerge: $(c_{B_{n}^i} \mu_{n})_{n\in\sigma(\N)}\xnarrowto{n\to\infty}\mu^i$ for every $i<k$,\footnote{Recall that $c_B \mu = (x\mapsto x-y)_\sharp(\mu \mres B)$ if $B = B_r(y)$ and $\mu \in \Mspace(\R^N)$.}
\item\label{bubbleSplitting} bubbles split: $\min_{0\le i<j< k_{n}}\dist(B_{n}^i,B_{n}^j) \xto{n\to\infty} +\infty$,
\item\label{bubbleDivergence} bubbles diverge: $ \min_{0\le i< k_n}\diam(B_{n}^i) \xto{n\to\infty} +\infty$,
\item\label{bubblingMassConservation} the bubbling mass is conserved: $\norm{\mu^b_{n}} \xto{\ell \to \infty} \sum_{0\le i<k} \norm{\mu^i}$,
\item\label{remainingVanishing} the remaining part is vanishing: $\sup_{x\in \R^N} \mu^v_{n}(B_1(x)) \xto{n\to\infty} 0$.
\end{enumerate}
\end{theorem}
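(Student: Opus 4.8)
The plan is to iterate a single-bubble extraction based on Prokhorov's compactness theorem, and then close with a diagonal argument.

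\emph{One-bubble extraction.} I first isolate the following lemma: if $(\nu_n)_n$ is bounded in $\Mspace_+(\R^N)$ and $\ell\coloneqq\limsup_n\sup_{x}\nu_n(B_1(x))>0$, then along a subsequence there exist $x_n\in\supp\nu_n$, radii $r_n\to+\infty$ and a non-zero $\mu^\star\in\Mspace_+(\R^N)$ with $c_{B_{r_n}(x_n)}\nu_n\narrowto\mu^\star$. To prove it, pass to a subsequence so that the $\limsup$ is a limit, pick $x_n$ with $\nu_n(B_1(x_n))\ge\ell/2$, and set $\tilde\nu_n\coloneqq\tau_{-x_n}\nu_n$. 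By Prokhorov (total masses are bounded), a further subsequence gives $\tilde\nu_n\mweakto\mu^\star$; testing against a fixed $\varphi\in\Cspace_0(\R^N)$ with $0\le\varphi\le1$ and $\varphi\equiv1$ on $\overline{B_1}$ gives $\int\varphi\dd\mu^\star\ge\limsup_n\nu_n(B_1(x_n))\ge\ell/2$, so $\mu^\star\ne0$. Since $\mu^\star$ has finite mass, there are radii $R_j\uparrow+\infty$, each a continuity point of $r\mapsto\mu^\star(B_r)$, with $\mu^\star(\R^N\setminus B_{R_j})\to0$; a diagonal choice then yields $r_n\to+\infty$ such that $\tilde\nu_n\mres B_{r_n}$ converges to $\mu^\star$ both weakly-$\star$ and in mass, hence narrowly, which is the claim since $c_{B_{r_n}(x_n)}\nu_n=\tilde\nu_n\mres B_{r_n}$ (the center $x_n$ may be moved into $\supp\nu_n$ at the cost of slightly enlarging $r_n$).

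\emph{Iteration.} Set $\mu_n^{(0)}\coloneqq\mu_n$ and $\ell_0\coloneqq\limsup_n\sup_x\mu_n^{(0)}(B_1(x))$; if $\ell_0=0$ the sequence is vanishing and we take $k=0$. Otherwise apply the lemma to $(\mu_n^{(0)})_n$, getting $\mu^0\neq0$, centers $x_n^0$, radii $r_n^0\to+\infty$, balls $B_n^0\coloneqq B_{r_n^0}(x_n^0)$, and put $\mu_n^{(1)}\coloneqq\mu_n^{(0)}-\mu_n\mres B_n^0$. Inductively, at step $j$ we have a subsequence along which $\ell_j\coloneqq\limsup_n\sup_x\mu_n^{(j)}(B_1(x))$ exists; if $\ell_j=0$ we stop with $k=j$, otherwise we extract $\mu^j$ with $\norm{\mu^j}\ge\ell_j/2$, a center $x_n^j$ with $\mu_n^{(j)}(B_1(x_n^j))\ge\ell_j/2$, radii $r_n^j\to+\infty$, a ball $B_n^j\coloneqq B_{r_n^j}(x_n^j)$, and put $\mu_n^{(j+1)}\coloneqq\mu_n^{(j)}-\mu_n\mres B_n^j$. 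Since $\sum_j\norm{\mu^j}\le\sup_n\norm{\mu_n}<+\infty$, either the process stops after finitely many steps (then $\ell_k=0$) or it runs forever with $\ell_j\downarrow0$ and $\norm{\mu^j}\to0$; in either case $k\in\N\cup\{+\infty\}$ is determined. Moreover, for fixed $i<j$ one has $\abs{x_n^i-x_n^j}\to+\infty$: otherwise, along a subsequence where it stays bounded, $B_1(x_n^j)\subseteq B_n^i$ eventually (as $r_n^i\to+\infty$), forcing $\mu_n^{(j)}(B_1(x_n^j))=0$ since $\mu_n^{(j)}$ vanishes on $B_n^i$, a contradiction.

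\emph{Diagonalization and conclusion.} Along the diagonal of the nested subsequences, let $k_n\uparrow k$ increase slowly enough that, for some $s_n\to+\infty$ and all $i<j<k_n$, one has $r_n^i\ge s_n$, and $\bigl|\norm{\mu_n\mres B_n^i}-\norm{\mu^i}\bigr|\le 2^{-n}$; one may in addition shrink the radii so that $\dist(B_n^i,B_n^j)\ge\abs{x_n^i-x_n^j}-r_n^i-r_n^j\ge s_n$, using that slower divergence of the radii is still admissible in the lemma. Then \labelcref{bubbleDivergence} and \labelcref{bubbleSplitting} hold, and in particular the $B_n^i$, $i<k_n$, are pairwise disjoint for $n$ large, so $\mu_n^{(j)}\mres B_n^j=\mu_n\mres B_n^j$, whence $c_{B_n^j}\mu_n=c_{B_n^j}\mu_n^{(j)}\narrowto\mu^j$, i.e. \labelcref{bubbleEmergence}, and $\mu_n^b=\sum_{i<k_n}\mu_n\mres B_n^i$ is a genuine sum. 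For \labelcref{remainingVanishing}, note $\sup_x\mu_n^{(k_n)}(B_1(x))\le\sup_x\mu_n^{(j)}(B_1(x))$ for any fixed $j\le k_n$, whose $\limsup$ over $n$ is $\ell_j$, and $\inf_j\ell_j=0$ (while $\ell_k=0$ if $k<+\infty$), so $\mu_n^v=\mu_n^{(k_n)}$ is vanishing. Finally, testing narrow convergence on the constant $1$ gives $\norm{\mu_n\mres B_n^i}\to\norm{\mu^i}$ for each fixed $i$, hence
\[
\Bigl|\norm{\mu_n^b}-\sum_{i<k}\norm{\mu^i}\Bigr|\le\sum_{i<k_n}\bigl|\norm{\mu_n\mres B_n^i}-\norm{\mu^i}\bigr|+\sum_{k_n\le i<k}\norm{\mu^i}\le k_n2^{-n}+\sum_{k_n\le i<k}\norm{\mu^i}\xto{n\to\infty}0,
\]
which is \labelcref{bubblingMassConservation}.

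\emph{Main obstacle.} The substantive ingredient is the one-bubble lemma, i.e. Prokhorov's theorem combined with the choice of radii diverging to infinity that upgrades weak-$\star$ to narrow convergence. The genuinely delicate point is the diagonal bookkeeping of the last step: tuning the rates at which $k_n$, the radii $r_n^i$, and the pairwise separations $\dist(B_n^i,B_n^j)$ diverge, so that \labelcref{bubbleSplitting}, \labelcref{bubbleDivergence}, \labelcref{remainingVanishing} and especially the mass conservation \labelcref{bubblingMassConservation} all hold along one common subsequence.
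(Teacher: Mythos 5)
Your overall strategy --- iterated single-bubble extraction by concentration-compactness followed by a diagonal assembly --- is the same as the paper's, and your selection criterion is in one respect cleaner: by always extracting a bubble of mass at least $\tfrac12\ell_j$ with $\ell_j=\limsup_n\sup_x\mu_n^{(j)}(B_1(x))$, the vanishing of the remainder \labelcref{remainingVanishing} follows at once from $\ell_j\downarrow 0$ and the monotonicity $\mu_n^{(k_n)}\leq\mu_n^{(j)}$, which spares you the paper's auxiliary bubbling function $m((\mu_n)_n)$, its characterization of vanishing sequences (\Cref{concentrationProperties}) and the comparison estimate \labelcref{bubbleConcentrationEstimate}.

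The gap is in the final assembly, exactly at the point you flag as delicate. Your iteration commits at step $i$ to a ball $B_n^i=B_{r_n^i}(x_n^i)$ and removes $\mu_n\mres B_n^i$, with $r_n^i$ fixed before the later centers are known. Your separation argument only yields $B_1(x_n^j)\not\subseteq B_n^i$, i.e. $\abs{x_n^i-x_n^j}>r_n^i-1$: nothing prevents $x_n^j$ from sitting at bounded distance from $\partial B_n^i$. For instance, for $\mu_n=\delta_0+\delta_{a_n}$ with $\abs{a_n}=b_n\to\infty$, the choice $x_n^0=0$, $r_n^0=b_n-5$ is a legitimate output of your one-bubble lemma, and then $x_n^1=a_n$ leaves no room for any $r_n^1\to\infty$ making $B_n^1$ disjoint from $B_n^0$; so \labelcref{bubbleSplitting} fails, and with it the identities $\mu_n^{(j)}\mres B_n^j=\mu_n\mres B_n^j$ (needed for \labelcref{bubbleEmergence}) and $\mu_n^b=\sum_i\mu_n\mres B_n^i$. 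The proposed repair, shrinking the radii at the end, is circular as stated: the shrinking required for $B_n^i$ depends on all later centers, and replacing $B_n^i$ by a smaller $\tilde B_n^i$ changes every subsequent remainder $\mu_n^{(j)}$ --- hence, in principle, the centers and profiles extracted from them --- and enlarges the final remainder, so \labelcref{remainingVanishing} no longer follows from $\mu_n^v\leq\mu_n^{(j)}$. To close this you must either quantify that the perturbation adds back only a total mass $\sum_{i<k_n}\mu_n(B_n^i\setminus\tilde B_n^i)\to 0$, so that the old centers, profiles, and the vanishing of the enlarged remainder survive, or restructure as the paper does: subtract the translated limit profiles $\tau_{x_n^j}\mu^j$ during the iteration rather than ball restrictions, and introduce the balls only at the very end, with radius $\ell$ and pairwise center separation at least $4\ell$ enforced by the extraction. (A minor related point: as written, $\mu_n^{(j+1)}\coloneqq\mu_n^{(j)}-\mu_n\mres B_n^j$ is a signed measure whenever $B_n^j$ meets an earlier ball; you mean $\mu_n^{(j)}\mres(B_n^j)^c$.)
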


Before proving \Cref{profileDecomposition}, we introduce the \enquote{bubbling} function of a sequence of finite \emph{signed} measures \((\mu_n)_{n\in\N}\):
\begin{equation}
m((\mu_n)_{n\in\N})\coloneqq\sup\Big\{ \norm{\mu} \st (\tau_{-x_{\sigma(\ell)}}\mu_{\sigma(\ell)})_{\ell\in\N} \mweakto \mu,\, \sigma \in \Sigma,\, x_{\sigma(\ell)} \in \R^N\, (\forall \ell) \Big\}.
\end{equation}
Although we will use this function on signed measures, we will start from a sequence of positive measures and use the following characterization of vanishing sequences, which holds only in the case of positive measures:
\begin{lemma}\label{concentrationProperties}
A sequence \((\mu_n)_{n\in\N}\) of finite positive measures over $\R^N$ is vanishing if and only if \(m((\mu_n)_{n\in\N})=0\).
\end{lemma}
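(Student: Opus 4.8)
The statement to prove is \Cref{concentrationProperties}: for a bounded sequence of \emph{positive} measures $(\mu_n)$, vanishing (in the sense that $\sup_x \mu_n(B_1(x)) \to 0$) is equivalent to $m((\mu_n)) = 0$. One implication is almost immediate: if $(\mu_n)$ is \emph{not} vanishing then along a subsequence $\mu_{\sigma(\ell)}(B_1(x_{\sigma(\ell)})) \geq \delta > 0$ for suitable centers $x_{\sigma(\ell)}$; since the translated measures $\tau_{-x_{\sigma(\ell)}}\mu_{\sigma(\ell)}$ are bounded in $\Mspace_+(\R^N)$, a further subsequence converges weakly-$\star$ (in duality with $\Cspace_0$) to some $\mu \in \Mspace_+(\R^N)$, and by weak lower semicontinuity on the open ball $B_1$ (or testing against a fixed bump function supported in $B_2$) we get $\norm{\mu} \geq \mu(B_1) \geq \limsup \text{(something)} \geq$ a positive constant, so $m((\mu_n)) > 0$. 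Hence $m((\mu_n)) = 0 \implies (\mu_n)$ vanishing.

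For the converse I would argue contrapositively: assume $m((\mu_n)) > 0$ and show $(\mu_n)$ is not vanishing. By definition of $m$ there is a subsequence $\sigma$, centers $x_{\sigma(\ell)}$, and a limit $\mu \neq 0$ with $\tau_{-x_{\sigma(\ell)}}\mu_{\sigma(\ell)} \mweakto \mu$. Pick $R>0$ large enough that $\mu(B_R) > 0$ — this is where positivity is used, since $\norm{\mu} = \mu(\R^N) = \lim_{R\to\infty}\mu(B_R)$. Now cover $B_R$ by finitely many unit balls $B_1(y_1),\dots,B_1(y_M)$ (with $M$ depending only on $R$ and $N$). Then $\mu(B_R) \leq \sum_{j=1}^M \mu(\overline{B_1(y_j)})$, so some fixed $y_j$ has $\mu(\overline{B_1(y_j)}) \geq \mu(B_R)/M =: \delta > 0$. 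Testing the weak convergence against a continuous compactly supported function that equals $1$ on $\overline{B_1(y_j)}$ and is supported in $B_2(y_j)$, we get $\liminf_\ell \mu_{\sigma(\ell)}(B_2(x_{\sigma(\ell)} + y_j)) \geq \delta$. A ball of radius $2$ is covered by a bounded number $C_N$ of unit balls, so for each $\ell$ there is a point $z_\ell$ with $\mu_{\sigma(\ell)}(B_1(z_\ell)) \geq \delta/C_N$ for $\ell$ large, which contradicts vanishing. (Here again positivity of $\mu_n$ is essential to pass from the mass in $B_2$ to the mass in some sub-unit-ball via the covering.)

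\textbf{Main obstacle.} The only delicate point is the interplay between open balls, closed balls, and the fact that weak-$\star$ convergence in duality with $\Cspace_0$ only controls mass of open sets from below ($\liminf \mu_n(U) \geq \mu(U)$ for $U$ open) and closed sets from above — so one must be careful to sandwich with compactly supported continuous test functions rather than indicators, and to allow a loss of a factor (radius $1$ vs radius $2$, closed vs open) absorbed by the covering constants $M = M(R,N)$ and $C_N$. None of this requires any estimate on the mass growth rate, so the bounded-sequence hypothesis is used only to extract weak-$\star$ limits. I expect the whole argument to be short; the writeup should just make the two coverings explicit and note clearly that positivity is what makes $\norm{\mu} = \sup_R \mu(B_R)$ and makes the covering bound $\mu(B_R) \leq \sum_j \mu(\overline{B_1(y_j)})$ valid.
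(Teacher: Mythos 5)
Your proof is correct, and the first implication (not vanishing $\Rightarrow m>0$) is exactly the paper's: extract a weak-$\star$ limit of the translates and bound its mass from below by $\eps$ via a test function (the paper uses upper semicontinuity on the closed ball $\bar B_1(0)$, which is the same device). For the other implication, however, you take a longer route than necessary. The paper proves "vanishing $\Rightarrow m=0$" \emph{directly}: if $(\mu_n)$ is vanishing and $\tau_{-x_{\sigma(\ell)}}\mu_{\sigma(\ell)}\mweakto\mu$, then for every $x$ the open-set Portmanteau inequality gives $\mu(B_1(x))\leq\liminf_\ell \mu_{\sigma(\ell)}(B_1(x+x_{\sigma(\ell)}))=0$, hence $\mu=0$ and $m((\mu_n))=0$ — one line, no coverings, and here the "open sets from below" direction of weak-$\star$ convergence is precisely the one you want. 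Your contrapositive version (nonzero limit $\mu$, find a ball $B_R$ of positive mass, cover it by unit balls, push back to the $\mu_{\sigma(\ell)}$ with a bump, cover $B_2$ again) is valid — positivity is indeed what makes both coverings and $\norm{\mu}=\sup_R\mu(B_R)$ work, and your care with open versus closed balls is appropriate — but the two covering constants and the contradiction are all avoidable. One small caution: as written, the chain "$\mu(B_1)\geq\limsup(\dots)$" in your first paragraph is not literally true (open sets only give $\liminf$ from \emph{below}); your parenthetical fix via a bump supported in $B_2$ is the correct statement and should be the one you keep in the writeup.
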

\begin{proof}
Assume that \((\mu_n)_{n\in\N}\) is vanishing and that \((\tau_{-x_{\sigma(\ell)}}\mu_{\sigma(\ell)})_{\ell\in\N} \mweakto \mu\) for some \(\sigma \in \Sigma\) and some sequence of points \((x_{\sigma(\ell)})_{\ell\in\N}\). Then, for every \(x\in\R^N\),
\[
\mu(B_1(x))\le\liminf_{\ell\to\infty}\tau_{-x_{\sigma(\ell)}}\mu_{\sigma(\ell)}(B_1(x))=\liminf_{\ell\to\infty}\mu_{\sigma(\ell)}(B_1(x+x_{\sigma(\ell)}))=0,
\]
i.e. \(\mu=0\) and thus \(m((\mu_\ell)_{\ell\in\N})=0\).

Conversely, if \((\mu_n)_{n\in\N}\) is not vanishing, then there exists \(\eps>0\), \(\sigma\in\Sigma\) and a sequence of points \((x_n)_{n\in\sigma(\N)}\) in \(\R^N\) such that \(\mu_n(B_1(x_n))\ge\eps\) for every \(n\in\sigma(\N)\). Up to further extraction, one can assume that \((\tau_{-x_{\sigma(\ell)}}\mu_{\sigma(\ell)})_{\ell\in\N} \mweakto\mu\in\Mspace(\R^N)\). We have
\[
\mu(\bar B_1(0))\ge\limsup_{\ell\to\infty}\tau_{-x_{\sigma(\ell)}}\mu_{\sigma(\ell)}(\bar B_1(0))=\limsup_{\ell\to\infty}\mu_{\sigma(\ell)}(\bar B_1(x_{\sigma(\ell)}))\ge\eps>0,
\]
which entails \(m((\mu_\ell)_{\ell\in\N})\ge\eps>0\).
\end{proof}
\begin{proof}[Proof of \Cref{profileDecomposition}]
If \((\mu_n)_{n\in\N}\) is vanishing, then we take $\sigma = \id$ and $k=0$, so that $\mu_{\sigma(\ell)} = \mu_\ell = \mu^v_\ell$, \labelcref{bubbleEmergence,bubbleSplitting,bubbleDivergence,bubblingMassConservation} are empty statements and \ref{remainingVanishing} is satisfied since $(\mu_n)_{n\in\N}$ is vanishing. Assume on the contrary that \((\mu_n)_{n\in\N}\) is not vanishing. We shall construct the bubbles by induction and prove their properties in several steps.

\medskip
\noindent\emph{Step 1: construction of bubbles centers.} At first step (step $0$), since \(m((\mu_n)_{n\in\N})>0\), there exists $\sigma_0 \in \Sigma$ and a sequence of points \((x^0_{n})_{n \in \sigma_0(\N)}\), such that
\begin{equation}
(\tau_{-x^0_{n}} \mu_n)_{n \in \sigma_0(\N)}\mweakto\mu^0\in\Mspace(\R^N)\quad\text{with}\quad\norm{\mu^0} \geq \frac 12 m((\mu_n)_{n\in\N}).
\end{equation}
We then set \(\mu^0_n\coloneqq\mu_n-\tau_{x^0_n}\mu^0\) and we continue by induction, starting from the sequence \((\mu^0_n)_{n\in\sigma_0(\N)}\). More precisely, assume that for a fixed step $k-1 \in \N$, for every \(i\in\N\) such that \(0\le i\le k-1\), we have built $\mu^i \in \Mspace(\R^N)$, $\sigma_i \in \Sigma$, points $(x^i_n)_{n\in \sigma_i(\N)}$ and sequences $(\mu^i_{n})_{n\in \sigma_i(\N)} \in \Mspace(\R^N)$ such that for every $i$,
\begin{gather}
\sigma_i \preceq \sigma_{i-1},\label{bubbleInduction1}\\
\mu^i_{n} = \mu_n -\sum_{0\leq j \leq i} \tau_{x^j_n}\mu^j, \quad (\forall n \in \sigma_i(\N)),\label{bubbleInduction2}\\
(\tau_{-x^i_n} \mu^{i-1}_n)_{n\in\sigma_i(\N)} \mweakto \mu^i,\label{bubbleInduction3}\\
\norm{\mu^i} \geq \frac 12 m((\mu^i_n)_{n \in\sigma_i(\N)})>0,\label{bubbleInduction4}
\end{gather}
where $\sigma_{-1} \coloneqq \id, (\mu_n^{-1}) \coloneqq (\mu_n)$.  If $m((\mu^{k-1}_n)_{n\in \sigma_{k-1}(\N)}) = 0$, we stop; otherwise, we proceed to the next step $k$ to build $\sigma_k, \mu^k, (x_n^k)_{n\in\sigma_k(\N)}, (\mu_n^k)$ as we did at step $k=0$, starting with $(\mu^{k-1}_n)_{n\in\sigma_{k-1}(\N)}$. Either the induction stops at some step \(k-1\in\N\) for which \(m((\mu^{k-1}_{n})_{n\in \sigma_{k-1}(\N)})=0\) or the previous objects are defined for every \(i\in\N\), in which case we let \(k \coloneqq +\infty\).

\medskip
\noindent\emph{Step 2: splitting of bubbles centers.} We prove that
\begin{equation}\label{splittingWeak}
\lim_{\sigma_i(\N) \ni n\to\infty} \dist(x^i_{n},x^j_{n}) =+\infty \quad\text{for every \(i,j\in\N\) with \(0\le j<i<k\)}.
\end{equation}
Indeed, assume by contradiction that there is a first index \(i<k\) such that for some \(j_0<i\), \((\dist(x^i_{n},x^{j_0}_{n}))_{n\in\sigma_i(\N)}\) is not divergent. In particular, there exists \(\sigma\preceq\sigma_i\) such that \((x^i_{n}-x^{j_0}_{n})_{n\in\sigma(\N)}\to x\in\R^N\). Moreover, \((\dist(x^i_n,x^j_n))_{n\in\sigma_i(\N)}\to\infty\), for every \(j<i\), \(j\neq j_0\) by minimality of $i$ and the triangle inequality $\dist(x_n^j,x_n^{j_0}) \leq \dist(x_n^j,x_n^i) + \dist(x_n^i,x_n^{j_0})$. Notice by \labelcref{bubbleInduction2} that for every $n\in \sigma(\N)$,
\begin{gather*}
\mu_n^{i-1} = \mu_n^{j_0-1} - \tau_{x_n^{j_0}} \mu^{j_0} - \sum_{j_0 < j < i} \tau_{x_n^j} \mu^j,\\
\shortintertext{hence taking the translation $\tau_{-x_n^i}$,}
\tau_{-x_n^i} \mu_n^{i-1} = \tau_{x_n^{j_0}-x_n^i} (\tau_{-x_n^{j_0}} \mu_n^{j_0-1}-\mu^{j_0}) - \sum_{j_0 < j < i} \tau_{x_n^j-x_n^i} \mu^j,\\
\shortintertext{and passing to the weak limit, knowing that $x_n^{j_0}-x_n^i \to -x$ and $\dist(x_n^j,x_n^i) \to +\infty$ for $j_0 < j <i$,}
\mu^i = \tau_{-x}(\mu^{j_0}-\mu^{j_0}) - \sum_{j_0 < j < i} 0 = 0.
\end{gather*}
This contradicts the fact that \((\tau_{-x^i_n}\mu^{i-1}_n)_{n\in\sigma(\N)}\mweakto\mu^i\neq 0\) and proves \labelcref{splittingWeak}.

\medskip
\noindent\emph{Step 3: weak convergence of bubbles.}
From \labelcref{bubbleInduction2} we get
\begin{equation}\label{bubbleWeak}
\tau_{-x^i_n}\mu^{i-1}_n =\tau_{-x^i_n}\mu_n-\sum_{0\le j<i}\tau_{-x^i_n+x^j_n}\mu^j,
\end{equation}
and by \labelcref{splittingWeak}, the sum converges weakly to $0$, and so
\begin{equation}\label{bubbleWeakConvergence}
(\tau_{-x^i_n}\mu_n)_{n\in\sigma_i(\N)}\mweakto \mu^i\quad\text{for every \(i\in\N\) with \(i<k\)}.
\end{equation}

\medskip
\noindent\emph{Step 4: construction of the bubbles with mass conservation.}
We now construct the extraction \(\sigma\in\Sigma\) that we need by induction: we set \(\sigma(0)=0\) and, assuming that \(\sigma(0)<\dots<\sigma(\ell-1)\), with \(\ell\in\N^\ast\), have been constructed, we set \(\sigma(\ell)\coloneqq n\) with \(n\in\sigma_{\ell \wedge k -1}(\N)\) large enough so that $n > \sigma(\ell-1)$ and for every \(i< \ell \wedge k\),
\begin{gather}
\mu_n(B_\ell(x^i_n))\le\norm{\mu^i}+2^{-\ell},\label{inductionBubbles1}\\
\shortintertext{and}
\min_{0\le j<i}\dist(x^i_n,x^j_n)\ge 4\ell.\label{inductionBubbles2}
\end{gather}
Such an \(n\) exists by \labelcref{splittingWeak} and \labelcref{bubbleWeakConvergence}, noticing that $\mu_n(B_\ell(x_n^i)) = (\tau_{-x_n^i} \mu_n)(B_\ell)$. Then for each \(n=\sigma(\ell)\), \(\ell\in\N\), we set \(k_n=\ell \wedge k\), and for each \(i\in\{0,\dots,k_n-1\}\), 
\[
B^i_n\coloneqq B_\ell(x^i_n).
\]
Finally, for every \(n\in\sigma(\N)\), we decompose $\mu_n$ as expected:
\[
\mu_n = \mu^b_n + \mu^v_n,\quad \text{where } \mu^b_n = \sum_{0\le i < k_n} \mu_n \mres B_n^i.
\]

Let us check the four first items \labelcref{bubbleEmergence}--\labelcref{bubblingMassConservation}. Notice that \labelcref{bubbleDivergence} is fulfilled because $\diam(B_{\sigma(\ell)}^i) = \ell \to +\infty$ as $\ell\to\infty$, and \labelcref{bubbleSplitting} because of \labelcref{inductionBubbles2}. Since for every $i<k$, $\lim_{\sigma(\N)\ni n \to\infty}\diam(B_n^i) = +\infty$ and $c_{B_n^i} \mu_n = (\tau_{-x_n^i} (\mu_n\mres B_n^i))$ for every $n \in \sigma_i(\N)$,  $(c_{B_n^i} \mu_n)_{n\in\sigma(\N)}$ converges weakly to $\mu^i$ by \labelcref{bubbleWeakConvergence}, and together with \labelcref{inductionBubbles1} it implies that
\[(c_{B_n^i} \mu_n)_{n\in\sigma(\N)} \narrowto \mu^i,\]
i.e. \labelcref{bubbleEmergence} is satisfied. 
Moreover, by \labelcref{inductionBubbles1} again,
\[
\limsup_{\ell\to\infty}\sum_{0\le i<k_{\sigma(\ell)}}\mu_{\sigma(\ell)}(B^i_{\sigma(\ell)})\le \sum_{0\le i<k}\norm{\mu^i}+\limsup_{\ell\to\infty}(\ell \wedge k) 2^{-\ell}=\sum_{0\le i<k}\norm{\mu^i},
\]
and since $k_n \to k$, by Fatou's lemma we have,
\[\sum_{0\leq i < k}\norm{\mu^i} \leq \liminf_{\ell\to\infty}\sum_{0\le i<k_{\sigma(\ell)}}\mu_{\sigma(\ell)}(B^i_{\sigma(\ell)}),\]
which proves \labelcref{bubblingMassConservation} because $ \sum_{0\le i<k_{\sigma(\ell)}}\mu_{\sigma(\ell)}(B^i_{\sigma(\ell)}) = \norm{\mu^b_{\sigma(\ell)}}$.

\medskip
\noindent\emph{Step 5: vanishing of the remaining part, proof of \labelcref{remainingVanishing}.} 
By \Cref{concentrationProperties}, it suffices to prove that \(m((\mu^v_n)_{n\in\sigma(\N)})=0\). We claim that:
\begin{equation}\label{bubbleConcentrationEstimate}
m((\mu^v_n)_{n\in\sigma(\N)})\le m((\mu^i_n)_{n\in\sigma_i(\N)}),\quad\text{for every \(i\in\N\) with \(i< k\)},
\end{equation}
which concludes since \(m((\mu^k_n)_{n\in\sigma_{k-1}(\N)})=0\) if \(k<\infty\), and \(m((\mu^i_n))_{n\in\sigma_i(\N)})\to 0\) as \(i\to\infty\) if \(k=\infty\). Indeed, if \(k=\infty\), we have by \labelcref{bubbleInduction4} and \labelcref{bubblingMassConservation},
\[
\frac 12\sum_{i\in\N}m((\mu^i_n)_{n\in\sigma_i(\N)})\le\sum_{i\in\N}\norm{\mu^i}= \lim_{\ell\to\infty} \norm{\mu_{\sigma(\ell)}^b} \leq \liminf_{\ell\to\infty}\norm{\mu_{\sigma(\ell)}}<\infty.
\]

Let us show \labelcref{bubbleConcentrationEstimate}. Let \(\bar\sigma\preceq\sigma\) and \((x_n)_{n\in\bar\sigma(\N)}\) be a sequence of points such that
\[
(\tau_{-x_n}\mu^v_n)_{n\in\bar\sigma(\N)}\mweakto \mu\in\Mspace(\R^N).
\]
We need to prove that \(\norm{\mu}\le m((\mu^i_n)_{n\in \sigma_i(\N)})\) for every $i <k$. Assume without loss of generality that \(\norm{\mu}>0\). Then for every \(i<k\),
\begin{equation}\label{distanceDiverge}
(\dist(x_n,x^i_n))_{n\in\bar\sigma(\N)}\to\infty.
\end{equation}
Otherwise, up to subsequence, $(\dist(x_n,x_n^i))_n$ would be bounded by some constant $M$, and for every $r > 0$,
\[(\tau_{-x_n} \mu_n^v)(B_r) \leq \mu_n^v(B_{r+M}(x^i_n)) \xto{n\to\infty} 0,\]
because \(\mu^v_n\) is supported on \(\R^N\setminus\cup_{0\le i<k_n}B^i_n\) and $B_{r+M}(x^i_n) \subseteq B^i_n$ for $n$ large enough by \labelcref{bubbleDivergence}. Hence \(\mu\) would be \(0\), a contradiction. Up to further extraction, one can assume that \((\tau_{-x_n}\mu_n)_{n\in\bar\sigma(\N)}\) converges weakly to a measure \(\bar\mu \in \Mspace(\R^N)\). Since \(\mu^v_n\le\mu_n\), we have \(\mu\le\bar\mu\). Moreover by \labelcref{bubbleInduction2}, for every \(i < k\) and \(n\in\ \bar\sigma(\N)\) large enough,
\[
\tau_{-x_n}\mu^i_n=\tau_{-x_n}\mu_n-\sum_{0\le j\leq i}\tau_{x^j_n-x_n}\mu^j,
\]
and because of \labelcref{distanceDiverge} the sum converges weakly to $0$, so that $\tau_{-x_n} \mu_n^i \mweakto \bar \mu$, and consequently,
\[
\norm{\mu}\le\norm{\bar\mu}\le m((\mu^i_n)_{n\in\sigma_i(\N)}),
\]
which is what had to be proved.

\medskip
\noindent\emph{Step 6: re-centering of the bubbles at points of \(\supp\mu_n\).} 
By \labelcref{bubbleWeakConvergence}, $(\tau_{-x^i_{n}} \mu_n)_{n \in \sigma(\N)}$ converges weakly to the non-trivial measure $\mu_i$ for every $i<k$, thus
\begin{equation}
\label{finiteDistance}
R_i/2\coloneqq \limsup_{\sigma(\N) \ni n \to +\infty} \dist(\supp \mu_n, x_n^i) < +\infty.
\end{equation}
Therefore, for every $n$ large enough, there is a point $\tilde x_n^i$ such that $\abs{x_n^i-\tilde x_n^i} < R_i$ and $\tilde x_n^i \in \supp \mu_n$. After a further extraction, one may assume that for every $i$,  $\abs{x_n^i-\tilde x_n^i} < R_i < r_i^n$ where $\diam B_n^i = 2r_n^i$ for every $n$, and $(x_n^i -\tilde x_n^i)_{n\in \sigma(\N)}$ converges to some $p_i \in \R^N$. Finally, we set $\tilde r_i^n \coloneqq r_i^n-R_i$ and $\tilde B_n^i \coloneqq B(\tilde x_n^i,\tilde r_i^n) \subseteq B_n^i$. After replacing the balls $B_n^i$ by $\tilde B_n^i$, \labelcref{bubbleSplitting} and \labelcref{bubbleDivergence} are satisfied by definition. Notice that $(\tau_{-\tilde x^i_{n}} \mu_n)_{n \in \sigma(\N)}$ converges weakly to $\tilde \mu^i \coloneqq \tau_{p_i} \mu^i$ with $\norm{\tilde \mu^i} = \norm{\mu^i}$, and $\limsup_n \norm{c_{B_n^i} \mu_n} = \limsup_n \mu_n(\tilde B_n^i) \leq \limsup_n \mu_n(B_n^i) = \norm{\mu^i}$ hence \labelcref{bubbleEmergence} holds. Besides, using Fatou's lemma,
\begin{align*}
\limsup_n \sum_{i<k_n} \mu_n(\tilde B_n^i) &\leq \limsup_n \sum_{i<k_n} \mu_n(B_n^i)\\
&= \sum_{i<k} \norm{\mu^i}\leq \sum_{i<k} \liminf_n \mu_n(\tilde B_n^i) \leq \liminf_n \sum_{i<k_n} \mu_n(\tilde B_n^i)
\end{align*}
 so that $\lim_n \sum_{i<k_n} \mu_n(\tilde B_n^i) = \sum_i \norm{\mu_i}$ and \labelcref{bubblingMassConservation} is satisfied. In particular,
 \[\lim_n \sum_{i<k_n} \mu_n(B_n^i\setminus \tilde B_n^i) = \lim_n \sum_{i<k_n} \mu_n(B_n^i) - \lim_n \sum_{i<k_n} \mu_n(\tilde B_n^i) = 0,\]
and \labelcref{remainingVanishing} holds as well.
\end{proof}
\begin{remark}\label{reduceRadius}
If the sequence of families of balls $(B^i_n)_{0 \leq i < k_n}$ satisfies the conclusion of the theorem, i.e. \labelcref{bubbleEmergence}--\labelcref{remainingVanishing}, then it is also the case for any family of balls $(\tilde B^i_n)_{0 \leq i < k_n}$ with the same centers as those of \(B^i_n\) and with smaller but still divergent radii (i.e. satisfying \labelcref{bubbleDivergence}). It can be easily seen following the arguments at Step~6 of the proof.
\end{remark}

\subsection{Lower bound by concentration-compactness}

We will first establish a lower bound for the minimal energy along vanishing sequences defined on varying subsets of \(\R^N\). We say that a sequence of Borel functions \((u_n)_{n\in\N}\), each defined on some open set \(\Omega_n\subseteq\R^N\), is vanishing if the sequence of measures \((\abs{u_n}\lbm^N \mres \Omega_n)_{n\in\N}\) is vanishing in the sense of \Cref{DefinitionVanishing}, namely if \(\norm{u_n}_{\uloc({\Omega}_n)}\to 0\) as \(n\to\infty\), where \(\uloc({\Omega})\) is the set of uniformly locally integrable functions on the open set \(\Omega\), i.e. Borel functions $u$ on $\Omega$ such that
\begin{equation}
\norm{u}_{\uloc({\Omega})}\coloneqq\sup_{x\in \R^N} \int_{\Omega\cap (x+[0,1)^N)}\abs{u}<+\infty.
\end{equation}
It will be convenient to first extend our Sobolev functions to a neighbourhood \(\Omega_\delta\) of \(\Omega\) where for every \(\delta>0\) and every set \(X\subseteq\R^N\), we have set
\[
X_\delta\coloneqq\{x\in\R^N\st\dist(x,X)<\delta\}.
\]
We will need to consider sufficiently regular domains for which we have an extension operator \(W^{1,p}\cap \uloc (\Omega)\to W^{1,p}\cap \uloc(\Omega_\delta)\). We will only apply it to domains with smooth boundary, in which case we can use a reflection technique. Since we want quantitative estimates, we will use the notion of \emph{reach} of a set \(X\subseteq\R^N\) (see \cite{federerCurvatureMeasures1959}). We say that \(X\) has positive reach if there exists \(\delta>0\) such that every \(x\in X_\delta\) has a unique nearest point \(\pi(x)\) on \(X\). The greatest \(\delta\) for which this holds is denoted by \(\reach(X)\) and the map \(x\in X_{\reach(X)}\mapsto\pi(x)\in X\) is called the nearest point retraction. 
\begin{example}\label{perforatedDomain}
Assume that \(\Omega\) is a perforated domain \(B^0\setminus \bigcup_{i=1}^k B^i\) where the \(B^i\) are disjoint closed balls included in some open ball \(B^0\) (possibly \(B^0=\R^N\)). Then,
\[
\reach(\partial\Omega)=
\inf\{\operatorname{radius}(B^i)\st i=0,\dots,k\}
\cup\{\dist(\partial B^i,\partial B^j)\st i\neq j\}.
\]
\end{example}

By \cite[Theorem~4.8]{federerCurvatureMeasures1959}, we have
\begin{enumerate}[(i)]
\item\label{lipschitzProjection} if \(x,y\in X_\delta\) with \(0<\delta<\delta_0\coloneqq\reach(X)\), then
\(
\abs{\pi(x)-\pi(y)}\leq \frac{\delta_0}{\delta_0-\delta}\abs{x-y},
\)
\item\label{orthogonalProjection}
if \(x\in X\) and \(D_x\) is the intersection of \(X_{\reach(X)}\) with the straight line crossing \(\partial\Omega\) orthogonally at \(x\), then 
\(\pi(y)= x\) for every \(y\in D_x\).
\end{enumerate}

\begin{lemma}[Extension]\label{lemmaExtension}
Let \(\Omega\subseteq\R^N\) be an open set such that its boundary \(\partial\Omega\) is \(\Cspace^1\) with positive reach\footnote{Thanks to \cite[Remark~4.20]{federerCurvatureMeasures1959}, $\partial \Omega$ is actually of class $\Cspace^{1,1}$.}. Then, for every \(\delta\in (0,\reach(\partial\Omega))\), every \(p\in [1,+\infty)\) and every \(u\in L^1\cap W^{1,p}(\Omega)\), there exists \(\Bar{u}\in L^1\cap W^{1,p}(\Omega_\delta)\) such that \(\Bar{u}=u\) a.e. on \(\Omega\), and
\[
\norm{\Bar{u}}_{L^1(\Omega_\delta)}\leq A \norm{u}_{L^1(\Omega)},
\;\;
\norm{\Bar{u}}_{\uloc({\Omega}_\delta)}\leq A \norm{u}_{\uloc(\Omega)},
\;\;
\norm{\nabla\Bar{u}}_{L^p(\Omega_\delta)}\leq A\norm{\nabla u}_{L^p(\Omega)},
\]
with a constant \(A<+\infty\) depending only on \(N,\delta\) and \(\reach(\partial\Omega)\).
\end{lemma}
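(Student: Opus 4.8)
I would use the reflection technique announced before the statement. Write $\delta_0 \coloneqq \reach(\partial\Omega)$, so that $0 < \delta < \delta_0$, and recall that the nearest-point retraction $\pi$ is defined and $\tfrac{\delta_0}{\delta_0-\delta}$-Lipschitz on the collar $(\partial\Omega)_\delta$ by \labelcref{lipschitzProjection}, while by \labelcref{orthogonalProjection} every $x\in(\partial\Omega)_\delta$ lies on the line through $\pi(x)$ meeting $\partial\Omega$ orthogonally there. Define the \emph{reflection across $\partial\Omega$} by $\Phi(x)\coloneqq 2\pi(x)-x$ on $(\partial\Omega)_\delta$. By \labelcref{orthogonalProjection}, $x$ and $\Phi(x)$ are symmetric with respect to $\partial\Omega$ along their common normal line, so $\Phi$ is an involution of $(\partial\Omega)_\delta$ fixing $\partial\Omega$ pointwise and interchanging the outer collar $\Omega_\delta\setminus\bar\Omega$ with the inner collar $\{x\in\Omega \st \dist(x,\partial\Omega)<\delta\}$, the latter being contained in $\Omega$ since $\partial\Omega$ is $\Cspace^1$ (so $\Omega$ lies locally on one side of $\partial\Omega$) and $\delta<\delta_0$. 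From the Lipschitz bound on $\pi$, $\Phi$ is $L$-Lipschitz on $(\partial\Omega)_\delta$ with $L\coloneqq 1+\tfrac{2\delta_0}{\delta_0-\delta}$, and since $\Phi=\Phi^{-1}$ it is bi-Lipschitz with constant $L$ both ways; hence its approximate differential satisfies $L^{-N}\leq\abs{\det D\Phi}\leq L^N$ a.e.

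I would then set $\bar u\coloneqq u$ a.e.\ on $\Omega$ and $\bar u\coloneqq u\circ\Phi$ a.e.\ on $\Omega_\delta\setminus\bar\Omega$ (these two open sets cover $\Omega_\delta$ up to the $\lbm^N$-null set $\partial\Omega$), and get the three estimates from the area formula for $\Phi$. Since $\Phi$ maps $\Omega_\delta\setminus\bar\Omega$ injectively into $\Omega$ with $\abs{\det D\Phi^{-1}}\leq L^N$, one gets $\int_{\Omega_\delta\setminus\bar\Omega}\abs{u\circ\Phi}\leq L^N\norm{u}_{L^1(\Omega)}$, whence $\norm{\bar u}_{L^1(\Omega_\delta)}\leq(1+L^N)\norm{u}_{L^1(\Omega)}$; on the reflected collar $\abs{\nabla\bar u}=\bigl\lvert(D\Phi)^{\top}\bigl((\nabla u)\circ\Phi\bigr)\bigr\rvert\leq L\,\abs{\nabla u}\circ\Phi$ a.e., so $\norm{\nabla\bar u}_{L^p(\Omega_\delta)}^p\leq(1+L^{N+p})\norm{\nabla u}_{L^p(\Omega)}^p$, a bound which is easily made $p$-independent after taking $p$-th roots since $L>1$; and for the $\uloc$-norm, given a unit cube $Q=x+[0,1)^N$ one writes $\int_{\Omega_\delta\cap Q}\abs{\bar u}\leq\norm{u}_{\uloc(\Omega)}+L^N\!\int_{\Phi(Q\cap(\Omega_\delta\setminus\bar\Omega))}\abs{u}$, and since $\Phi(Q\cap(\Omega_\delta\setminus\bar\Omega))$ is a subset of $\Omega$ of diameter $\leq L\sqrt N$ it is covered by at most $C(N,L)$ unit lattice cubes, so the last integral is $\leq C(N,L)\norm{u}_{\uloc(\Omega)}$. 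Taking the supremum over $Q$ gives all three estimates with a single constant $A=A(N,\delta,\delta_0)$ depending only on $N$, $\delta$ and $\reach(\partial\Omega)$.

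The one genuinely delicate point, which I expect to be the main obstacle, is that the piecewise-defined $\bar u$ actually lies in $W^{1,p}(\Omega_\delta)$, i.e.\ that no singular part of $\nabla\bar u$ is created along $\partial\Omega$. I would handle it by approximation: since $\partial\Omega$ is $\Cspace^1$, hence Lipschitz, pick $u_k\in\Cspace^\infty(\bar\Omega)$ with $u_k\to u$ in $L^1\cap W^{1,p}(\Omega)$; each reflected $\bar u_k$ is continuous on $\Omega_\delta$ (built from the continuous $u_k$ and the Lipschitz $\Phi$ fixing $\partial\Omega$) and locally Lipschitz on $\Omega$ and on $\Omega_\delta\setminus\bar\Omega$, hence belongs to $W^{1,p}_\loc(\Omega_\delta)$; applying the estimates above to $u_k-u_\ell$ shows $(\bar u_k)$ is Cauchy in $L^1\cap W^{1,p}(\Omega_\delta)$, its limit is $\bar u\in W^{1,p}(\Omega_\delta)$, and the estimates pass to the limit. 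The remaining ingredients are routine: Rademacher differentiability of $\Phi$, the chain rule for the composition of a Sobolev function with a bi-Lipschitz map, and the area formula, all used only through the explicit constant $L$. (\Cref{perforatedDomain} confirms that this hypothesis on $\partial\Omega$ covers the perforated domains that will actually be used.)
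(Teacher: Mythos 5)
Your proposal is correct and uses the same reflection construction $\Phi(x)=2\pi(x)-x$ as the paper, with the same well-definedness argument (that reflecting the outer collar lands in $\Omega$) and the same appeal to the bi-Lipschitz change of variables and chain rule. The main difference is that you spell out two points the paper states tersely: you compute an explicit Lipschitz constant for $\Phi$ and trace it through the three estimates (including the uniform-in-$p$ bound for the gradient norm and the cube-covering argument for $\uloc$), and, more substantively, you notice and close the gluing gap — justifying that the piecewise-defined $\bar u$ is actually in $W^{1,p}(\Omega_\delta)$ with no singular contribution on $\partial\Omega$ — by approximating $u$ by smooth functions on $\bar\Omega$, for which the glued reflection is continuous across $\partial\Omega$ and hence Sobolev, and passing to the limit. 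The paper's proof simply asserts that $\bar u$ satisfies the desired estimates, leaving the Sobolev-regularity of the glued function implicit; your approximation step is a legitimate and clean way to make that rigorous (it does implicitly rely on the density of $\Cspace^\infty(\bar\Omega)$ in $W^{1,p}(\Omega)$, which holds here because $\partial\Omega$ has uniform Lipschitz character, as guaranteed by positive reach and $\Cspace^1$ regularity).
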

\begin{proof}
Let \(\sigma:(\partial\Omega)_\delta\to (\partial\Omega)_\delta\) be the reflection through \(\partial\Omega\), defined by \(\sigma(x)=2\pi(x)-x\). By the properties \labelcref{lipschitzProjection} and \labelcref{orthogonalProjection} of the nearest point retraction, we have that \(\sigma=\sigma^{-1}\) (simply because $\pi(\sigma(x)) = \pi(x))$ and $\sigma$ is \(L\)-Lipschitz with a constant \(L<+\infty\) depending on \(\delta\) and \(\reach(\partial\Omega)\) only.

We define\footnote{Note that \(\Bar{u}\) is not defined on \(\partial\Omega\), but this set is negligible.} \(\Bar{u}\) by \(\Bar{u}=u\) on \(\Omega\) and \(\Bar{u}=u\circ\sigma\) on \(\Omega_\delta\setminus{\Omega}\). This map is well-defined since \(\sigma(\Omega_\delta\setminus{\Omega})\subseteq\Omega\). Indeed, if we had \(x,\sigma(x)\in\Omega_\delta\setminus{\Omega}\), then the line segment \([x,\sigma(x)]\) would meet \(\partial\Omega\) orthogonally at its center \(\pi(x)\), and would remain out of \({\Omega}\), because otherwise there would exist a point $y$ belonging either to $\partial \Omega \cap (x,\pi(x))$ or $\partial \Omega \cap (\pi(x),\sigma(x))$ thus contradicting the definition of $\pi(x)$. Such a situation is not possible for a \(\Cspace^1\) boundary.

Moreover, by the change of variable formula and the chain rule, \(\Bar{u}\) satisfies the desired estimates since \(\sigma\) is bi-Lipschitz with its Lipschitz constants controlled in terms of \(\delta\) and \(\reach(\partial\Omega)\).
\end{proof}
We will need a localized version of the Gagliardo--Nirenberg--Sobolev inequality in a particular case:
\begin{lemma}\label{lemmaGagliardo}
Let \(\Omega\subseteq\R^N\) be an open set such that \(\partial\Omega\) is \(\Cspace^1\) with positive reach, let \(p\in [1,+\infty)\), let \(r\geq p(1+\frac{1}{N})\), and assume that \(r\leq\frac{pN}{N-p}\) when \(p<N\). Then for every \(u\in L^1\cap W^{1,p}(\Omega)\),
\[
\norm{u}_{L^r(\Omega)}\leq C \bigl(\norm{\nabla u}_{L^p(\Omega)}+\norm{u}_{L^1(\Omega)}\bigr)^\alpha \norm{u}_{\uloc({\Omega})}^{1-\alpha},
\]
where \(\alpha\in (0,1]\) is the unique parameter such that \(\frac 1r=\alpha (\frac 1p-\frac 1N)+(1-\alpha)\), and the constant \(C<+\infty\) depends on \(N,r,p\) and \(\reach(\partial\Omega)\).
\end{lemma}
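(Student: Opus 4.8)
The statement is a localized Gagliardo–Nirenberg–Sobolev inequality on a domain $\Omega$ whose boundary is $\Cspace^1$ with positive reach. The natural strategy is to reduce to the classical (global) GNS inequality on $\R^N$ by extension, then handle the gap between the homogeneous Sobolev norm and the left-hand side $L^r$ norm via an interpolation/covering argument that brings in the $\uloc$ norm. First I would use \Cref{lemmaExtension} to extend $u$ to $\bar u \in L^1 \cap W^{1,p}(\Omega_\delta)$, and then multiply by a smooth cutoff supported in $\Omega_\delta$, equal to $1$ on $\Omega$, to obtain a function $\tilde u \in L^1\cap W^{1,p}(\R^N)$ (equal to $u$ on $\Omega$) with $\norm{\tilde u}_{L^1(\R^N)}, \norm{\nabla \tilde u}_{L^p(\R^N)}, \norm{\tilde u}_{\uloc(\R^N)}$ all controlled by $A\big(\norm{\nabla u}_{L^p(\Omega)} + \norm{u}_{L^1(\Omega)}\big)$ and $A\norm{u}_{\uloc(\Omega)}$ respectively, with $A$ depending on $N,\delta,\reach(\partial\Omega)$; since $\norm{u}_{L^r(\Omega)} \leq \norm{\tilde u}_{L^r(\R^N)}$, it suffices to prove the inequality on $\R^N$.

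**Reduction to $\R^N$ and the interpolation step.** On $\R^N$, I would first observe that the exponent constraint $p(1+\tfrac1N) \leq r$ and $r \leq \tfrac{pN}{N-p}$ when $p<N$ is exactly what makes the interpolation exponent $\alpha$, defined by $\tfrac1r = \alpha(\tfrac1p - \tfrac1N) + (1-\alpha)$, lie in $(0,1]$. The cleanest route is a dyadic covering of $\R^N$ by unit cubes $\{Q_j\}$: on each $Q_j$ apply the classical Gagliardo–Nirenberg inequality on the cube (with a constant depending only on $N,p$ via the extension to $\R^N$ or reflection across $\partial Q_j$, which is harmless) in the form
\[
\norm{u}_{L^r(Q_j)} \leq C\big(\norm{\nabla u}_{L^p(Q_j)} + \norm{u}_{L^{q}(Q_j)}\big)^{\theta}\norm{u}_{L^{q}(Q_j)}^{1-\theta}
\]
for a suitable low exponent $q$ (e.g. $q=1$) and $\theta$ matching the scaling. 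Then raise to the power $r$, sum over $j$, and use that $\norm{u}_{L^1(Q_j)} \leq \norm{u}_{\uloc(\R^N)}$ together with a discrete Hölder/$\ell^p$-$\ell^1$ summation (the key being that the power of $\norm{u}_{\uloc}$ pulled out of the sum is precisely $r(1-\alpha)$, leaving a summable quantity like $\sum_j (\norm{\nabla u}_{L^p(Q_j)}^p + \norm{u}_{L^1(Q_j)})$ that reassembles into $\big(\norm{\nabla u}_{L^p(\R^N)} + \norm{u}_{L^1(\R^N)}\big)^{\alpha r}$). Combining across cubes gives the claimed estimate with a constant depending only on $N,r,p$, and tracking back through the extension adds the dependence on $\reach(\partial\Omega)$.

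**Main obstacle.** The delicate point is getting the exponent bookkeeping right in the dyadic summation: one must choose the local exponent $q$ and interpolation parameter $\theta$ on each cube so that, after raising to the $r$-th power and summing, the homogeneity in $\norm{u}_{\uloc}$ comes out as exactly $1-\alpha$ (in the $L^r$ norm, i.e. power $r(1-\alpha)$ before the final root) and the remaining factor genuinely superadditively reassembles into the global $\big(\norm{\nabla u}_{L^p} + \norm{u}_{L^1}\big)^{\alpha}$ — this requires $\alpha r \geq p$ (so that $\ell^{\alpha r/p} \hookrightarrow \ell^1$ applied to $\{\norm{\nabla u}_{L^p(Q_j)}^p\}_j$, or equivalently a reverse Minkowski/monotonicity of $\ell^q$ norms), which one checks follows from $r \geq p(1+\tfrac1N)$ and $\alpha \leq 1$. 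A slightly slicker alternative avoiding cubes is: interpolate directly, $\norm{u}_{L^r} \leq \norm{u}_{L^{p^\star}}^{\beta}\norm{u}_{L^1}^{1-\beta}$ when $p<N$ (with $p^\star = \tfrac{pN}{N-p}$, valid since $1 \leq r \leq p^\star$), bound $\norm{u}_{L^{p^\star}} \leq C\norm{\nabla u}_{L^p}$ by Sobolev, then interpolate one of the resulting $\norm{u}_{L^1}$ factors further against $\norm{u}_{\uloc}$ via a summation over unit cubes — but the constraint $r\geq p(1+\tfrac1N)$ is really needed to cover $p\geq N$ and to make $\alpha\in(0,1]$, so some form of the covering argument seems unavoidable and is where the care must go.
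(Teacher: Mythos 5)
Your proposal is correct and follows essentially the same route as the paper: extend $u$ beyond $\Omega$ via \Cref{lemmaExtension}, cover by congruent cubes (of side comparable to $\reach(\partial\Omega)$, contained in $\Omega_\delta$), apply the classical Gagliardo--Nirenberg--Sobolev inequality with the $L^1$ norm as the low endpoint on each cube, bound the local $L^1$ factors by $\norm{u}_{\uloc}$ to the power $r(1-\alpha)$, and sum using superadditivity of $s\mapsto s^{r\alpha/p}$, which is exactly where the hypothesis $r\geq p(1+\tfrac1N)$ (equivalently $r\alpha\geq p$) enters. The only cosmetic difference is that the paper works directly on $\Omega_\delta$ rather than cutting off to all of $\R^N$.
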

\begin{proof}[Proof of \Cref{lemmaGagliardo}]
We let \(u\in L^1\cap W^{1,p}(\Omega)\) and we extend \(u\) to \(\Bar{u}\in L^1\cap W^{1,p}(\Omega_\delta)\) as in \Cref{lemmaExtension}, with $\delta \coloneqq \reach(\Omega)/2$. By the Gagliardo--Nirenberg--Sobolev inequality (see \cite{nirenbergEllipticPartialDifferential1959}) on the hypercube \(Q_\delta=[-\frac{\delta}{\sqrt{N}},\frac{\delta}{\sqrt{N}})^N \subseteq \bar B_\delta\), we have for some $C$ depending on $N,\delta$,
\[
\norm{\Bar{u}}_{L^r(Q_\delta)}\leq C \norm{\nabla \Bar{u}}_{L^p(Q_\delta)}^\alpha\norm{\Bar{u}}_{L^1(Q_\delta)}^{1-\alpha}+C\norm{\Bar{u}}_{L^1(Q_\delta)}.
\]
We then cover \(\Omega\) with the disjoint hypercubes \(Q_\delta(c)=c+Q_\delta\subseteq\Omega_\delta\) centered at points \(c\) on the grid \(\Cspace\coloneqq\Omega\cap \frac{2\delta}{\sqrt{N}}\Z^N\). Since \(r \geq p(1+1/N)\), we can check that 
\begin{equation}\label{superadditiveExponent}
r\alpha=\frac{r-1}{1+\frac 1N-\frac 1p}\geq p.
\end{equation}
By superadditivity of \(s\mapsto s^{\frac{r\alpha}{p}}\) and of \(s\mapsto s^{r\alpha}\), we obtain
\begin{align*}
\norm{u}_{L^r(\Omega)}^r&\leq \sum_{c\in \Cspace}\norm{\Bar{u}}^r_{L^r(Q_\delta(c))}\\
&\leq 
C'\sum_{c\in \Cspace}\norm{\nabla \Bar{u}}_{L^p(Q_\delta(c))}^{p\frac{r\alpha}p}\norm{\Bar{u}}_{L^1(Q_\delta(c))}^{r(1-\alpha)}+C'\norm{\Bar{u}}^r_{L^1(Q_\delta(c))}\\
&\leq 
C'\norm{\nabla \Bar{u}}_{L^p(\Omega_\delta)}^{r\alpha}\norm{\Bar{u}}_{\uloc({\Omega}_\delta)}^{r(1-\alpha)}+C'\norm{\Bar{u}}_{L^1(\Omega_\delta)}^{r\alpha}\norm{\Bar{u}}_{\uloc({\Omega}_\delta)}^{r(1-\alpha)}\\
&\leq 
C''\bigl(\norm{\nabla u}_{L^p(\Omega)}+\norm{u}_{L^1(\Omega)}\bigr)^{r\alpha}\norm{u}_{\uloc({\Omega})}^{r(1-\alpha)}.\qedhere
\end{align*}
\end{proof}
\begin{proposition}\label{vanishingCost}
Assume that \(f:\R^N\times\R\times\R^N\to[0,+\infty]\) satisfies \labelcref{hypLowerContinuous} and \labelcref{hypCompact} for some \(p\in(1,+\infty)\). Consider a vanishing sequence $(u_n)_{n\in \N}$ in \(W^{1,1}_\loc(\Omega_n,\R_\pm)\), where the $\Omega_n\subseteq\R^N$ are open sets with \(\Cspace^1\) boundary such that \(\inf_{n\in\N}\reach(\partial\Omega_n)>0\), and a sequence $(\Phi_n)_{n\in\N}$ of Borel maps \(\Phi_n:\Omega_n\to\R^N\) such that $ \sup_{y\in\Omega_n}\abs{\Phi_n(y)-x_0} \to 0$ as \(n\to+\infty\) for some \(x_0\in\R^N\). If $\theta_n \coloneqq \int_{\Omega_n} u_n \neq 0$ for every $n$ and $(\theta_n)_{n\in\N}$ is bounded, then:
\[\liminf_{n\to+\infty} \frac 1{\abs{\theta_n}} \int_{\Omega_n} f(\Phi_n(y), u_n(y),\nabla u_n(y)) \dd y \geq f'_-(x_0,0^\pm,0),\]
where \(f'_-(x_0,0^\pm,0)\) was defined in \labelcref{slopeInf}.
\end{proposition}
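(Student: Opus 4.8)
The plan is to fix an arbitrary $\lambda' \in [0, f'_-(x_0,0^\pm,0))$ --- if $f'_-(x_0,0^\pm,0)=0$ there is nothing to prove since $f\geq 0$ --- and show that the $\liminf$ in the statement is at least $\lambda'$; letting $\lambda'\uparrow f'_-(x_0,0^\pm,0)$ then yields the conclusion. First I would extract a subsequence (not relabelled) along which $\abs{\theta_n}^{-1}\int_{\Omega_n} f(\Phi_n,u_n,\nabla u_n)$ converges to the $\liminf$; if this limit is $+\infty$ the inequality is trivial, so I may assume it is finite, hence $\int_{\Omega_n} f(\Phi_n,u_n,\nabla u_n)\leq C\abs{\theta_n}$ for some $C<+\infty$ and all large $n$. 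By definition of $f'_-(x_0,0^\pm,0)$ in \labelcref{slopeInf}, there is $\delta>0$ such that $f(x,u,\xi)\geq\lambda'\abs{u}$ whenever $\abs{x-x_0}<\delta$, $0<\pm u<\delta$ and $\abs{\xi}<\delta$; and for $n$ large, $\sup_{\Omega_n}\abs{\Phi_n-x_0}<\delta$ by hypothesis. I would then split $\Omega_n=G_n\cup A_n\cup B_n$ with $A_n=\{\abs{u_n}\geq\delta\}$, $B_n=\{\abs{\nabla u_n}\geq\delta\}\setminus A_n$ and $G_n$ the remainder. On $G_n$ the bound $f(\Phi_n,u_n,\nabla u_n)\geq\lambda'\abs{u_n}$ holds pointwise (either $u_n=0$, and then both sides vanish, or $0<\pm u_n<\delta$ with $\abs{\nabla u_n}<\delta$, and the choice of $\delta$ applies). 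Since $u_n$ has constant sign, $\int_{\Omega_n}\abs{u_n}=\abs{\theta_n}$, so
\[
\int_{\Omega_n} f(\Phi_n,u_n,\nabla u_n)\dd y \;\geq\; \lambda'\int_{G_n}\abs{u_n} \;=\; \lambda'\Bigl(\abs{\theta_n}-\int_{A_n\cup B_n}\abs{u_n}\Bigr),
\]
and the whole matter reduces to proving $\int_{A_n\cup B_n}\abs{u_n}=o(\abs{\theta_n})$.

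This is where \labelcref{hypCompact} and the Sobolev-type estimates of the previous subsection enter. From \labelcref{hypCompact} applied pointwise and integrated, together with the bound above, one gets $\norm{\nabla u_n}_{L^p(\Omega_n)}^p\leq C_0\abs{\theta_n}$. I would apply \Cref{lemmaGagliardo}, whose constant is uniform in $n$ because $\inf_n\reach(\partial\Omega_n)>0$, with the \emph{critical} exponent $r=p(1+\tfrac1N)$, for which the parameter of the lemma equals $\alpha=p/r$ and $r\alpha=p$ (see \labelcref{superadditiveExponent}); this $r$ is admissible since $p>1>\tfrac{N}{N+1}$. Using $\norm{u_n}_{L^1(\Omega_n)}=\abs{\theta_n}$, the bound on $\norm{\nabla u_n}_{L^p}$, and the boundedness of $(\theta_n)$, this gives
\[
\norm{u_n}_{L^r(\Omega_n)}\;\leq\; C_1\,\abs{\theta_n}^{1/r}\,\norm{u_n}_{\uloc(\Omega_n)}^{\,1-\alpha}.
\]
Then on $A_n$ one has $\abs{u_n}\leq\delta^{1-r}\abs{u_n}^r$, so $\int_{A_n}\abs{u_n}\leq\delta^{1-r}\norm{u_n}_{L^r(\Omega_n)}^r\leq\delta^{1-r}C_1^r\,\abs{\theta_n}\,\norm{u_n}_{\uloc(\Omega_n)}^{r(1-\alpha)}$; while $\abs{B_n}\leq\delta^{-p}\norm{\nabla u_n}_{L^p(\Omega_n)}^p\leq\delta^{-p}C_0\abs{\theta_n}$, so by Hölder $\int_{B_n}\abs{u_n}\leq\norm{u_n}_{L^r(\Omega_n)}\abs{B_n}^{1-1/r}\leq C_2\,\abs{\theta_n}\,\norm{u_n}_{\uloc(\Omega_n)}^{1-\alpha}$. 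Dividing by $\abs{\theta_n}$ and using that $(u_n)$ is vanishing, i.e. $\norm{u_n}_{\uloc(\Omega_n)}\to 0$, I conclude $\abs{\theta_n}^{-1}\int_{A_n\cup B_n}\abs{u_n}\to 0$, which completes the argument.

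The main obstacle is that the naive heuristic --- $u_n$ vanishes, hence $(u_n,\nabla u_n)$ is close to $(0,0)$ almost everywhere, hence $f(\Phi_n,u_n,\nabla u_n)\gtrsim f'_-(x_0,0^\pm,0)\abs{u_n}$ pointwise --- is false, because $\nabla u_n$ need not be pointwise small anywhere. The correct mechanism is quantitative: \labelcref{hypCompact} controls $\norm{\nabla u_n}_{L^p}^p$ by a multiple of $\abs{\theta_n}$, and the Gagliardo--Nirenberg--Sobolev inequality of \Cref{lemmaGagliardo}, invoked precisely at the exponent making $r\alpha=p$, converts the uniform-local smallness of $u_n$ into the statement that the sets where $\abs{u_n}$ or $\abs{\nabla u_n}$ exceeds $\delta$ carry only an $o(\abs{\theta_n})$ fraction of the total mass $\abs{\theta_n}$. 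It is essential that this remains valid in the degenerate regime $\abs{\theta_n}\to 0$, which is exactly why the critical choice of $r$ (so the powers of $\abs{\theta_n}$ cancel) is needed rather than an arbitrary subcritical one; a secondary point requiring care is the uniformity of the Sobolev constants over the moving domains $\Omega_n$, guaranteed by the uniform lower bound on $\reach(\partial\Omega_n)$.
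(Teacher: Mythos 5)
Your proof is correct, and the quantitative core coincides with the paper's: both hinge on \labelcref{hypCompact} giving $\norm{\nabla u_n}_{L^p(\Omega_n)}^p\lesssim\abs{\theta_n}$, on \Cref{lemmaGagliardo} applied at the critical exponent $r=p(1+\frac1N)$ (so that $r\alpha=p$ and the powers of $\abs{\theta_n}$ cancel), and on the uniform reach bound to make the Sobolev constant independent of $n$. Where you genuinely diverge is in the concluding ``soft'' step. The paper pushes the weighted measure $\frac1{\theta_n}(\Phi_n,u_n,\nabla u_n)_\sharp(u_n\lbm^N\mres\Omega_n)$ forward to $\R^N\times\R\times\R^N$, proves that its three marginals converge narrowly to Dirac masses at $x_0$, $0$ and $0$ (these two marginal estimates are exactly your $\int_{A_n}\abs{u_n}=o(\abs{\theta_n})$ and $\int_{B_n}\abs{u_n}=o(\abs{\theta_n})$, the latter obtained there by a Chebyshev bound on $\lbm^N(\{\abs{\nabla u_n}\geq M\})$ rather than your H\"older argument --- both are fine), and then invokes lower semicontinuity of $\nu\mapsto\int g\,\dd\nu$ for the lower semicontinuous envelope $g$ of $(x,u,\xi)\mapsto f(x,u,\xi)/u$, the liminf defining $f'_-$ entering through the identity $g(x_0,0,0)=f'_-(x_0,0^+,0)$. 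You instead unpack that liminf directly: fix $\lambda'$ strictly below it, extract a $\delta$-neighbourhood on which $f\geq\lambda'\abs{u}$ pointwise, and show the exceptional sets carry a negligible fraction of the mass. Your route is more elementary and self-contained (no envelope construction, no semicontinuity theorem for integral functionals under narrow convergence) at the price of the extra $\lambda'$--$\delta$ bookkeeping; the paper's packaging is slicker once the measure-theoretic machinery is accepted. Both are complete proofs.
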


\begin{proof}[Proof of \Cref{vanishingCost}]
Suppose for example that $u_n \geq 0$ a.e. for every $n$. Without loss of generality, we may assume after extracting a subsequence that:
\begin{equation}\label{boundEnergyMass}
K \coloneqq \sup_n \frac 1{\theta_n} \int_{\Omega_n} f(\Phi_n(y), u_n(y), \nabla u_n(y))\dd y + \theta_n < +\infty.
\end{equation}
For all $n\in\N$, we consider the measure $\nu_n\in \Mspace_+(\R^N \times \R \times \R^N)$ defined as the pushforward of the probability measure $\bar\mu_n = \frac 1{\theta_n}u_n \lbm^N \mres \Omega_n$ by the map $(\Phi_n, u_n,\nabla u_n)$, that is:
\[\nu_n \coloneqq (\Phi_n, u_n,\nabla u_n)_\sharp (\bar\mu_n).\]
We are going to show in several steps that $\nu_n \narrowto \delta_{(x_0,0,0)}$ and deduce the result. It suffices to show that the three projections $\nu^i_n \coloneqq (\pi^i)_\sharp \nu_n$, $i\in\{1,2,3\}$ converge narrowly to $\delta_{x_0}, \delta_0$ and $\delta_0$ respectively. Indeed, this would imply that $(\nu_n)$ converges narrowly to a measure concentrated on $(x_0,0,0)$, hence to $\delta_{(x_0,0,0)}$ since the $\nu_n$ are probability measures. First of all, since $(\nu_n)$ has bounded mass and $(\theta_n)$ is bounded, we may take a subsequence (not relabeled) such that $\nu_n \mweakto \nu $ and $\theta_n \to \theta$ as \(n\to\infty\) for some $\nu \in \Mspace_+(\R^N \times \R\times \R^N)$ and $\theta \geq 0$.

\medskip
\noindent\emph{Step~1: $\nu_n^1 \narrowto \delta_{x_0}$.} This is a direct consequence of the fact that $\nu_n^1$ is concentrated on $\Phi_n(\R^N)$ for every $n$ and $\dist(\Phi_n(\R^N),x_0) \to 0$ as \(n\to\infty\).

\medskip
\noindent\emph{Step~2: $\nu_n^2 \narrowto \delta_{0}$.} By \labelcref{boundEnergyMass} and our assumption \labelcref{hypCompact}, there is a constant \(K_1>0\) with
\begin{equation}
\label{nonHomogeneousSobolev}
\int_{\Omega_n}\abs{\nabla u_n}^p\leq K_1\int_{\Omega_n}u_n,\quad n\in\N.
\end{equation}
We deduce from Markov's inequality, and \Cref{lemmaGagliardo} applied with \(r=p(1+\frac 1N)\), corresponding to \(\alpha=\frac{N}{N+1}\), that
\begin{align*}
\nu^2_n([\eta,+\infty)) &= \frac 1{\theta_n} \int_{\{u_n \geq \eta\}} u_n\\
&= \frac 1{\theta_n} \int_{\{u_n \geq \eta\}} u_n^{1-r}u_n^r
\\
&\leq \frac{1}{\theta_n\eta^{r-1}}\int_{\Omega_n}u_n^r\\
&\leq \frac{C}{\theta_n\eta^{r-1}} \bigl(\norm{\nabla u_n}_{L^p(\Omega_n)}+\norm{u_n}_{L^1(\Omega_n)}\bigr)^{r\alpha} \norm{u_n}_{\uloc({\Omega}_n)}^{r(1-\alpha)}\\
&\leq \frac{C'}{\eta^{r-1}} \bigl(1+\theta_n^{p-1}\bigr)\norm{u_n}_{\uloc({\Omega}_n)}^{r(1-\alpha)},
\end{align*}
where in the last inequality, we have used the identity \(\alpha r=p\) and \labelcref{nonHomogeneousSobolev}, and $C,C'$ depend only on $N,r,p$ and $\inf_n \reach(\partial \Omega_n)$.

Since \((u_n)_{n\in\N}\) is vanishing and \((\theta_n)_{n\in\N}\) is bounded, the last term in the previous inequality goes to zero as \(n\to\infty\) and it follows that $\nu_n^2 \narrowto \delta_0$.

\medskip
\noindent\emph{Step~3: $\nu_n^3 \narrowto \delta_{0}$.} Fix $M > 0$ and $\eta > 0$. One has by \labelcref{nonHomogeneousSobolev},
\begin{align*}
\nu_n^3([M,+\infty)) = \frac 1{\theta_n} \int_{\{\abs{\nabla u_n} \geq M\}} u_n &\leq \frac 1{\theta_n} \int_{\{u_n < \eta\}\cap\{\abs{\nabla u_n} \geq M\}} u_n + \frac 1{\theta_n} \int_{\{u_n > \eta\}} u_n\\
&\leq \frac \eta{\theta_n} \lbm^N(\{\abs{\nabla u_n} \geq M\}) + \nu_n^2([\eta,+\infty))\\
&\leq \frac\eta{\theta_n} \frac 1{M^p} \int_{\Omega_n} \abs{\nabla u_n}^p + \nu_n^2([\eta,+\infty))\\
&\leq \frac{\eta K_1}{M^p}+ \nu_n^2([\eta,+\infty)).
\end{align*}
By the previous step, we know that $\lim_{n\to+\infty} \nu_n^2([\eta,+\infty)) = 0$, hence taking the superior limit as $n\to+\infty$ then $\eta \to 0$ we get
\(\lim_{n\to+\infty} \nu_n^3([M,+\infty)) = 0\).
Since this is true for every $M > 0$ we obtain $\nu_n^3 \narrowto \delta_0$.

\medskip
\noindent\emph{Step~4: conclusion.} By the previous steps, we deduce that $\nu_n \narrowto \delta_{(x_0,0,0)}$ as $n\to+\infty$. We define $g : \R^N \times \R_+ \times \R^N \to [0,+\infty]$ as the lower semicontinuous envelope of $\R^N \times \R_+^* \times \R^N \ni (x,u,\xi) \mapsto \frac 1u f(x,u,\xi)$. By \labelcref{hypLowerContinuous}, we have \(g(x,u,\xi)=\frac 1u f(x,u,\xi)\) if \(u>0\), and by definition of $f'_-$ (see \labelcref{slopeInf}), we have \(g(x,0,0)=f'_-(x,0^+,0)\) for every \(x\in\R^N\). Hence, by lower semicontinuty of $g$ and weak convergence of $(\nu_n)$, we get
\begin{align*}
\liminf_{n\to\infty} \int_{\Omega_n} f(\Phi_n,u_n,\nabla u_n) &\geq \liminf_{n\to\infty} \int_{\{u_n>0\}} \frac{f(\Phi_n,u_n,\nabla u_n)}{u_n} u_n\\
 &= \liminf_{n\to\infty} \int_{\R^N \times \R \times \R^N} g(x,u,\xi) \dd \nu_n(x,u,\xi)\\
&\geq \int_{\R^N} g(x,u,\xi) \dd \delta_{(x_0,0,0)}=f'_-(x_0,0^+,0),
\end{align*}
which ends the proof of the lemma.
\end{proof}

As a corollary, we may now relate the slope at $0$ of $H_f$ to that of $f$.

\begin{corollary}
Assume that $f : \R^N \times \R \times \R^N \to [0,+\infty]$ satisfies \labelcref{hypLowerContinuous}, \labelcref{hypCompact} for some \(p\in(1,+\infty)\) and \labelcref{hypSlope}. Fix $x\in \R^N$. If either $N \geq 2$ or ($N=1$ and $H_f(x,\cdot)\not\equiv +\infty$ on $\R_\pm^\ast$), then $H_f'(x,0^\pm) = f'_-(x,0^\pm,0)$.
\end{corollary}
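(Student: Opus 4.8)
The plan is to prove the two inequalities $H_f'(x,0^\pm) \le f'_-(x,0^\pm,0)$ and $H_f'(x,0^\pm) \ge f'_-(x,0^\pm,0)$ separately. Fix $x$; by \Cref{positivityConstraint} we reduce to the one-variable Lagrangian obtained by freezing the spatial slot, namely $g \coloneqq f(x,\cdot,\cdot)$ on $\R_+\times\R^N$ for the $0^+$-slope, and $\tilde g(u,\xi)\coloneqq f(x,-u,-\xi)$ (with $\tilde g(0,0)\coloneqq 0$) for the $0^-$-slope; each inherits \labelcref{hypLowerContinuous} and \labelcref{hypCompact}, with $H_g = H_f(x,\cdot)$ and $H_{\tilde g}(m) = H_f(x,-m)$ for $m\ge 0$. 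I will carry out the $0^+$ case, the $0^-$ case being the same after this reduction. Recall from \Cref{concaveDN} that $H_f(x,\cdot)$ is either identically $+\infty$ on $\R_+^\ast$ or finite, concave and non-decreasing, so that $H_f'(x,0^+) = \lim_{m\to 0^+} H_f(x,m)/m$ exists in $[0,+\infty]$.

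\emph{Lower bound.} If $H_f(x,\cdot)\equiv+\infty$ on $\R_+^\ast$ there is nothing to prove, so assume $H_f(x,\cdot)$ finite. Choose $m_n\to 0^+$ and near-minimisers $u_n\in W^{1,1}_\loc\cap L^1(\R^N,\R_+)$ with $\int_{\R^N}u_n = m_n$ and $\ener_f^x(u_n)\le H_f(x,m_n) + m_n/n$. Since $\sup_{y\in\R^N}\int_{y+[0,1)^N}u_n \le \int_{\R^N}u_n = m_n \to 0$, the sequence $(u_n)$ is vanishing in the sense of \Cref{DefinitionVanishing}, so \Cref{vanishingCost} applies with $\Omega_n\coloneqq\R^N$ (empty boundary, so $\inf_n\reach(\partial\Omega_n)>0$ trivially), constant maps $\Phi_n\equiv x$ and $\theta_n\coloneqq m_n$, giving $\liminf_n m_n^{-1}\ener_f^x(u_n)\ge f'_-(x,0^+,0)$. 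Combined with $\ener_f^x(u_n)\le H_f(x,m_n)+m_n/n$ and $H_f(x,m_n)/m_n \to H_f'(x,0^+)$, this yields $H_f'(x,0^+)\ge f'_-(x,0^+,0)$.

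\emph{Upper bound.} If $f(x,0,0)>0$, then by \labelcref{hypLowerContinuous} $f$ is bounded below by $f(x,0,0)/2>0$ near $(x,0,0)$, hence $f(x',u,\xi)/\abs u\to+\infty$ as $(x',u,\xi)\to(x,0^+,0)$ and $f'_-(x,0^+,0)=+\infty$, so that the lower bound already forces $H_f'(x,0^+)=f'_-(x,0^+,0)=+\infty$. Otherwise $f(x,0,0)=0$, i.e.\ $g(0,0)=0$, and I invoke \Cref{slopeUpperBound} applied to $g$ with the function $\rho$ of \labelcref{hypSlope} when $N\ge 2$, or \Cref{slopeUpperBound1D} applied to $g$ when $N=1$ — the latter being legitimate exactly because of the standing hypothesis $H_f(x,\cdot)\not\equiv+\infty$ in dimension one. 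In both cases
\[
H_f'(x,0^+) = \lim_{m\to 0^+}\frac{H_g(m)}{m} \le \limsup_{u\to 0^+}\ \sup_{\abs\xi=1}\frac{f(x,u,\rho(\abs u)\xi)}{u} \le f'_-(x,0^+,0),
\]
the last inequality being precisely \labelcref{slopeInf} in \labelcref{hypSlope} (read with $\rho\equiv 0$ and $\Sph^0=\{\pm 1\}$ when $N=1$). Together with the lower bound, this gives $H_f'(x,0^+) = f'_-(x,0^+,0)$.

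Overall the corollary is an assembly of \Cref{vanishingCost}, \Cref{slopeUpperBound}/\Cref{slopeUpperBound1D} and \labelcref{hypSlope}, and no single step is delicate. The two points that require attention — and that I expect to be the only real obstacles — are: first, that \Cref{slopeUpperBound} requires $g(0,0)=0$, which is not among the hypotheses here, so the degenerate case $f(x,0,0)>0$ must be separated out (and is exactly what the extra assumption in the case $N=1$ rules out); and second, noting that a near-minimising sequence of prescribed small total mass is automatically vanishing, which is what licenses applying \Cref{vanishingCost} with $\Omega_n=\R^N$.
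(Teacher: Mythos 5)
Your proof is correct and follows essentially the same route as the paper's: the lower bound $H_f'(x,0^\pm)\ge f'_-(x,0^\pm,0)$ via \Cref{vanishingCost} applied to an almost-minimizing sequence whose masses tend to $0$ (hence vanishing), and the upper bound via \Cref{slopeUpperBound} (resp. \Cref{slopeUpperBound1D}) combined with \labelcref{hypSlope}. Your separate treatment of the degenerate case $f(x,0,0)>0$ merely fills in a detail the paper's terse proof leaves implicit.
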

\begin{proof}
The inequality $H_f'(x,0^\pm) \leq f'_-(x,0^\pm,0)$ is precisely \labelcref{hypSlope}, and the converse inequality $H_f'(x,0^\pm) \geq f'_-(x,0^\pm,0)$ comes from \Cref{vanishingCost}. Indeed, if $(u_n)_{n\in\N} \in W^{1,1}_\loc(\R^N,\R_\pm)$ is a sequence of functions of mass $\theta_n = \int_{\R^N} u_n$ going to $0$ and which is almost minimizing in the sense that $\lim_{n\to\infty} \frac{\ener^x_f(u_n)}{\abs{\theta_n}} = \liminf_{n\to\infty} \frac{H(x,\theta_n)}{\theta_n}$ then $(u_n)_{n\in\N}$ is vanishing and \Cref{vanishingCost} yields
\[\liminf_{n\to\infty} \frac{\ener^x_f(u_n)}{\abs{\theta_n}} \geq f'_-(x,0^\pm,0).\]
\end{proof}

We now establish our main energy lower bound along sequences with bounded mass (not necessarily vanishing):
\begin{proposition}\label{almostMinimisers}
Assume that $(f_\eps)_{\eps > 0}$ is a family of functions \(f_\eps:\R^N\times\R\times\R^N\to[0,+\infty]\) satisfying \labelcref{hypLowerContinuous}, \labelcref{hypConvex}, \labelcref{hypCompact} and \labelcref{hypGammaCv} where \(f = \lim_\eps f_\eps\). Let \((\eps_n)_{n\in\N}\) be a sequence of positive numbers going to zero, \((R_n)_{n\in\N}\) and \((r_n)_{n\in\N}\) be two sequences in \((0,+\infty]\) such that \(\lim_{n\to\infty}r_n=\lim_{n\to\infty} R_n-r_n= +\infty\), \((u_n)_{n\in\N}\) be a sequence of functions \(u_n \in W^{1,1}_\loc(B_{R_n},\R_\pm)\) with finite limit mass $m \coloneqq \lim_{n\to\infty} \int_{B_{r_n}} u_n$, and $(\Phi_n)_{n\in\N}$ be a sequence of Borel maps \(\Phi_n:B_{R_n}\to\R^N\) such that 
\begin{equation}
\label{hypPhiN}
\sup_{y\in B_{R_n}}\abs{\Phi_n(y)-x_0} \xto{n\to\infty} 0\quad \text{for some \(x_0\in\R^N\).}
\end{equation}
Then there exists a family \((u^i)_{0\leq i<k}\) of functions in \(W^{1,1}_\loc(\R^N,\R_\pm)\) with \(k\in\N\cup\{+\infty\}\), such that \(m_i\coloneqq\int_{\R^N}u^i\in\R_\pm^\ast\) for every \(i\), and
\begin{gather}\label{MassVanishingFormula}
m=m_v+\sum_{0\leq i<k}m_i\quad\text{with \(\pm m_v\geq 0\),}\\
\label{decompositionSequences}
\liminf_{n\to\infty}\int_{B_{R_n}}f_{\eps_n}(\Phi_n,u_n,\nabla u_n) \geq \abs{m_v} f'_-(x_0,0^\pm,0)+\sum_{0\leq i<k}\int_{\R^N}f(x_0,u^i,\nabla u^i).
\end{gather}
\end{proposition}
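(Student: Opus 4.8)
The plan is to feed the profile decomposition of \Cref{profileDecomposition} with the truncated measures $\mu_n \coloneqq u_n\lbm^N\mres B_{r_n}$, which form a bounded sequence in $\Mspace_+(\R^N)$ because $\int_{B_{r_n}}u_n\to m$, and then to bound the energy from below bubble by bubble (via lower semicontinuity of integral functionals) and on the vanishing remainder (via \Cref{vanishingCost}). I first reduce to $u_n\geq 0$, the case $u_n\leq 0$ being symmetric. Assuming the left-hand side of \labelcref{decompositionSequences} is finite, I pass to a subsequence along which $\sup_n\int_{B_{R_n}}f_{\eps_n}(\Phi_n,u_n,\nabla u_n)<+\infty$; since $f_{\eps_n}\geq 0$ and $B_{r_n}\subseteq B_{R_n}$, this bounds $\int_{B_{r_n}}f_{\eps_n}(\Phi_n,u_n,\nabla u_n)$, whence \labelcref{hypCompact} together with $\int_{B_{r_n}}u_n\to m$ gives $\sup_n\norm{\nabla u_n}_{L^p(B_{r_n})}<+\infty$.

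Applying \Cref{profileDecomposition} yields, after extraction, integers $k_n\uparrow k\in\N\cup\{+\infty\}$, non-trivial bubbles $\mu^i\in\Mspace_+(\R^N)$, and balls $B_n^i=B_{\rho_n^i}(x_n^i)$ with centres in $\supp\mu_n\subseteq\overline{B_{r_n}}$ satisfying \labelcref{bubbleEmergence}--\labelcref{remainingVanishing}; by \Cref{reduceRadius} I further shrink the radii so that the $\rho_n^i$ still diverge but stay $\leq\tfrac12(R_n-r_n)$, so that $\overline{B_n^i}\subseteq B_{R_n}$ and the translates $v_n^i\coloneqq u_n(\cdot+x_n^i)$ are well defined on a ball of radius $R_n-r_n\to\infty$. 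For fixed $i<k$, writing $c_{B_n^i}\mu_n=w_n^i\lbm^N$ with $w_n^i$ the restriction of $v_n^i$ to $(B_n^i\cap B_{r_n})-x_n^i$, the mass bound $\mu_n(B_n^i)\to\norm{\mu^i}$ controls $\norm{w_n^i}_{L^1}$, the energy bound controls $\norm{\nabla w_n^i}_{L^p}$, and the Gagliardo--Nirenberg--Sobolev estimate of \Cref{lemmaGagliardo} (turning $L^1$ plus $L^p$-gradient control into an $L^p$ bound) makes $(w_n^i)_n$ bounded in $W^{1,p}$ on every fixed ball; a diagonal extraction gives $w_n^i\weakto u^i$ in $W^{1,p}_\loc(\R^N)$ with $u^i\in W^{1,1}_\loc(\R^N,\R_+)$, and consistency with $c_{B_n^i}\mu_n\narrowto\mu^i$ yields $\mu^i=u^i\lbm^N$, hence $m_i\coloneqq\int_{\R^N}u^i=\norm{\mu^i}\in(0,+\infty)$. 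Combining $\norm{\mu_n}\to m$ with \labelcref{bubblingMassConservation} and \labelcref{remainingVanishing} then gives $\norm{\mu_n^v}\to m_v\coloneqq m-\sum_{0\leq i<k}m_i\geq 0$, which is \labelcref{MassVanishingFormula}.

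Turning to the energy, since the $B_n^i$ are pairwise disjoint for $n$ large by \labelcref{bubbleSplitting} and $f_{\eps_n}\geq 0$,
\[
\int_{B_{R_n}}f_{\eps_n}(\Phi_n,u_n,\nabla u_n)\ \geq\ \sum_{0\leq i<k_n}\int_{B_n^i}f_{\eps_n}(\Phi_n,u_n,\nabla u_n)\ +\ \int_{\Omega_n^v}f_{\eps_n}(\Phi_n,u_n,\nabla u_n),
\]
where $\Omega_n^v\coloneqq B_{r_n}\setminus\bigcup_{i<k_n}\overline{B_n^i}$ carries the vanishing part $u_n^v$, the restriction of $u_n$ to $\Omega_n^v$, with $\int_{\Omega_n^v}u_n^v=\norm{\mu_n^v}\to m_v$. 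On the $i$-th bubble, the change of variables $x=y+x_n^i$, the uniform convergence $\Phi_n\to x_0$, the monotonicity $f_{\eps_n}\geq f_{\eps_\ell}$ valid for $n$ large by \labelcref{hypGammaCv}, and the lower semicontinuity of the integral functional $\int f_{\eps_\ell}$ along weak $W^{1,p}$ convergence (granted by \labelcref{hypLowerContinuous} and \labelcref{hypConvex}), followed by monotone convergence $f_{\eps_\ell}\uparrow f$ as $\ell\to\infty$ and then by exhausting $\R^N$, give
\[
\liminf_{n\to\infty}\int_{B_n^i}f_{\eps_n}(\Phi_n,u_n,\nabla u_n)\ \geq\ \int_{\R^N}f(x_0,u^i,\nabla u^i)\qquad(i<k);
\]
summing over finitely many indices and exhausting (all terms being non-negative) then bounds $\liminf_n$ of the first sum below by $\sum_{0\leq i<k}\int_{\R^N}f(x_0,u^i,\nabla u^i)$. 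For the remaining term, $\Omega_n^v$ is a perforated domain whose boundary is $\Cspace^1$ with $\inf_n\reach(\partial\Omega_n^v)>0$ by \Cref{perforatedDomain}, \labelcref{bubbleSplitting}--\labelcref{bubbleDivergence} and the control on radii and centres; since $(u_n^v)_n$ is vanishing by \labelcref{remainingVanishing}, $\Phi_n\to x_0$ uniformly and $\theta_n\coloneqq\int_{\Omega_n^v}u_n^v\to m_v$, applying \Cref{vanishingCost} with $f_{\eps_\ell}$ in place of $f$ and letting $\ell\to\infty$ (using $f'_{\eps_\ell,-}(x_0,0^+,0)\uparrow f'_-(x_0,0^+,0)$ from \labelcref{hypGammaCv}) gives, when $m_v>0$, $\liminf_n\int_{\Omega_n^v}f_{\eps_n}(\Phi_n,u_n,\nabla u_n)\geq m_v\,f'_-(x_0,0^+,0)$, the case $m_v=0$ being trivial. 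Adding the two lower bounds yields \labelcref{decompositionSequences}.

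The step requiring the most care, and the main obstacle, is the treatment of bubbles whose centres $x_n^i$ approach $\partial B_{r_n}$: when $r_n=R_n=+\infty$ every domain above is already clean, but in general one must exploit the buffer annulus $B_{R_n}\setminus B_{r_n}$, where $u_n$ is still defined, both to make sense of the translated profiles $v_n^i$ on balls of diverging radius and to keep $\Omega_n^v$ of class $\Cspace^1$ with uniformly positive reach, arranging through the freedom in \Cref{reduceRadius} and the hypotheses $r_n\to\infty$ and $R_n-r_n\to\infty$ that each $\overline{B_n^i}$ sits well inside $B_{R_n}$. Checking in this boundary situation that the extracted profile $u^i$ still lies in $W^{1,1}_\loc(\R^N,\R_+)$ and that the mass balance \labelcref{MassVanishingFormula} remains exact is the delicate part of the argument.
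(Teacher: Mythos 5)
Your proposal follows the paper's strategy faithfully: profile decomposition applied to $\mu_n = u_n\lbm^N\mres B_{r_n}$, shrinking bubbles via \Cref{reduceRadius}, weak $W^{1,p}_\loc$ limits for the bubbles using \labelcref{hypCompact}, lower semicontinuity of integral functionals on the bubbles, \Cref{vanishingCost} on the remainder, and monotone convergence in $\eps$ via \labelcref{hypGammaCv}. The decomposition of the energy, and the handling of the $\eps$-limit, are exactly as in the paper.

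There is one concrete point where your argument diverges from the paper and does not quite close. You take the vanishing domain to be $\Omega_n^v = B_{r_n}\setminus\bigcup_{i<k_n}\overline{B_n^i}$, and claim that $\inf_n\reach(\partial\Omega_n^v)>0$ by \Cref{perforatedDomain}, \labelcref{bubbleSplitting}--\labelcref{bubbleDivergence} and ``the control on radii and centres.'' But the control you arrange (bubbles $\overline{B_n^i}\subseteq B_{R_n}$) keeps the bubbles well inside the \emph{large} ball, not inside $B_{r_n}$. The centres $x_n^i$ lie in $\supp\mu_n\subseteq\overline{B_{r_n}}$ and the radii diverge, so a bubble can, and generically will, cross $\partial B_{r_n}$; at a point of $\partial B_{r_n}\cap\partial B_n^i$ the boundary of $\Omega_n^v$ has a corner and $\reach(\partial\Omega_n^v)=0$. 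The paper avoids this by removing the bubbles from the \emph{large} ball, i.e.\ taking $\Omega_n = B_{R_n}\setminus\bigcup_i B_n^i$, which is a genuine perforated domain with $\inf_n\reach(\partial\Omega_n)>0$ since the bubbles sit well inside $B_{R_n-1}$; the energy on $B_{r_n}\setminus\bigcup_i\overline{B_n^i}$ is then dominated by the energy on this larger domain, and one applies \Cref{vanishingCost} there. The same boundary phenomenon affects your extraction of the profiles: you work with $w_n^i$ equal to the restriction of $u_n(\cdot+x_n^i)$ to $(B_n^i\cap B_{r_n})-x_n^i$, which may exhaust only a half-space if $\dist(x_n^i,\partial B_{r_n})$ stays bounded, so the claim $w_n^i\weakto u^i$ in $W^{1,p}_\loc(\R^N)$ and the identity $\mu^i=u^i\lbm^N$ are not automatic; the paper uses the full translates $u_n^i=u_n(\cdot+x_n^i)$, which are defined on balls of radius $\geq R_n-r_n\to\infty$, precisely to sidestep this. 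So the ``delicate point'' you flag at the end is real, but the remedy you propose (keeping bubbles inside $B_{R_n}$) does not fix the reach of $\partial B_{r_n}\setminus\bigcup\overline{B_n^i}$; replacing your vanishing domain and your restricted translates by the paper's choices resolves it.
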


\begin{proof}
Suppose for example that $u_n \geq 0$ a.e. for every $n$. We first assume, up to subsequence, that the left hand side of \labelcref{decompositionSequences} is a finite limit. We apply the profile decomposition \Cref{profileDecomposition} to the sequence of positive measures \(\mu_n=u_n \lbm^N \mres B_{r_n}\) where, we assume the extraction \(\sigma\) to be the identity for convenience, and we use the same notation as in \Cref{profileDecomposition}. In particular, for each bubble \(B^i_n=B_{r^i_n}(x^i_n)\), with \(0\leq i<k_n\), we have \(x^i_n\in\supp\mu_n\subseteq \bar B_{r_n}\). By assumption, we have \(\lim_{n\to\infty}(R_n-r_n)=+\infty\); hence, up to reducing the radii of the balls \(B^i_n\) if necessary, in such a way that their radii still diverge (see \Cref{reduceRadius}), we can assume that
\begin{equation}
\label{bubbleStrongInclusion}
B^i_n\subseteq B_{R_n-1},\quad 0\leq i<k_n.
\end{equation}

For each \(0\leq i<k_n\), we let \(u^i_n\coloneqq u_n(\cdot+x^i_n)\). Since \labelcref{decompositionSequences} is assumed to be finite, we get that the sequence \((u^i_n)_n\) is bounded in \(W^{1,p}_\loc(\R^N)\) by \labelcref{hypCompact}. Hence, after a further extraction if needed, we get that \((u^i_n)_{n\in\N}\weakto u^i\) weakly in \(W^{1,p}_\loc(\R^N)\) for some limit \(u^i\), for every \(0\leq i<k=\lim k_n\). Setting \(m_i=\int_{\R^N}u^i\) for every $i$, by \labelcref{bubblingMassConservation} in \Cref{profileDecomposition}, we have
\[
m_v\coloneqq m-\sum_{0\leq i<k}m_i=\lim_{n\to\infty}\int_{B_{r_n}\setminus\cup_{0\leq i<k_n}B^i_n}u_n.
\]

Fix \(\eps>0\). We decompose the energy as
 \begin{multline}\label{lowerBoundExistence}
\int_{B_{R_n}}f_{\eps}(\Phi_n,u_n,\nabla u_n)
=
\int_{B_{R_n}\setminus \cup_{0\leq i<k_n}B^i_n}f_{\eps}(\Phi_n,u_n,\nabla u_n)\\
+
\sum_{0\leq i<k_n}\int_{B_{r^i_n}}f_{\eps}(\Phi_n(\cdot+x^i_n),u^i_n,\nabla u^i_n).
\end{multline}
Note that the domains \(\Omega_n\coloneqq B_{R_n}\setminus \cup_{0\leq i<k}B^i_n\) satisfy \(\inf_{n\in\N}\reach(\partial\Omega_n)>0\) as noticed in \Cref{perforatedDomain}, thanks to \labelcref{bubbleStrongInclusion} and \labelcref{bubbleSplitting}, \labelcref{bubbleDivergence} in \Cref{profileDecomposition}. Hence, applying \Cref{vanishingCost} to the Lagrangian \(f_\eps\), we obtain
\begin{equation}
\label{vanishingLowerBoundExistence}
\liminf_{n\to\infty}\int_{B_{R_n}\setminus \cup_{0\leq i<k_n}B^i_n}f_{\eps}(\Phi_n,u_n,\nabla u_n)
\geq m_v(f_\eps)'_-(x_0,0^+,0).
\end{equation}
 Moreover, by the lower semicontinuity of integral functionals (see \cite[Theorem~4.1.1]{buttazzoSemicontinuityRelaxationIntegral1989}), in view of \labelcref{hypPhiN}, we have for each \(i\) with \(0\leq i<k\),
\begin{equation}\label{lowerBoundButtazzo}
\liminf_{n\to\infty}
\int_{B_{r^i_n}}f_{\eps}(\Phi_n(\cdot+x^i_n),u^i_n,\nabla u^i_n)
\geq
\int_{\R^N}f_\eps(x_0,u^i,\nabla u^i).
\end{equation}
Finally, by \labelcref{lowerBoundExistence}, \labelcref{vanishingLowerBoundExistence}, \labelcref{lowerBoundButtazzo}, \labelcref{hypGammaCv} together with monotone convergence, we deduce that
\begin{align*}
&\quad\liminf_{n\to\infty}\int_{B_{R_n}}f_{\eps_n}(\Phi_n,u_n,\nabla u_n)\\
&\geq
\lim_{\eps\to 0^+}\Bigl(
m_v (f_\eps)'_-(x_0,0^+,0)
+
\sum_{0\leq i<k}
\int_{\R^N}f_\eps(x_0,u^i,\nabla u^i)\Bigr)
\\
&=m_vf'_{-}(x_0,0^+,0)
+
\sum_{0\leq i<k}
\int_{\R^N}f(x_0,u^i,\nabla u^i).
\end{align*}
The similar statement for non-positive functions is obtained in the same way.
\end{proof}
\subsection{Existence of optimal profiles}

For the existence of an optimal profile in \labelcref{def_Hf}, we need a criterion that rules out splitting and vanishing of minimizing sequences:
\begin{lemma}\label{IndecomposableLinear}
Let \(H:\R_+\to\R_+\) be a concave function. Then $H$ is subadditive, and if for some \(0 < \theta < m\) one has \(H(m)=H(m-\theta)+H(\theta)\), then \(H\) is linear on \((0,m)\).
\end{lemma}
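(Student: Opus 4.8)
The plan is to derive both assertions straight from the definition of concavity together with the standing sign condition $H\ge 0$, using none of the earlier results.

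First I would establish subadditivity. Fix $a,b>0$; since $a=\tfrac{a}{a+b}(a+b)+\tfrac{b}{a+b}\cdot 0$, concavity gives $H(a)\ge\tfrac{a}{a+b}H(a+b)+\tfrac{b}{a+b}H(0)$, and symmetrically $H(b)\ge\tfrac{b}{a+b}H(a+b)+\tfrac{a}{a+b}H(0)$. Summing the two and using $H(0)\ge 0$ yields $H(a)+H(b)\ge H(a+b)+H(0)\ge H(a+b)$; the degenerate cases $a=0$ or $b=0$ are trivial.

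Next, assume $H(m)=H(m-\theta)+H(\theta)$ for some $0<\theta<m$. Plugging $a=m-\theta$, $b=\theta$ into the two inequalities above and adding gives $H(m)=H(m-\theta)+H(\theta)\ge H(m)+H(0)$, forcing $H(0)\le 0$, hence $H(0)=0$; consequently both of those inequalities are in fact equalities. Writing $L(t)\coloneqq\tfrac{t}{m}H(m)$ for the chord from $(0,0)$ to $(m,H(m))$, equality in the first of them reads $H(\theta)=L(\theta)$: the graph of $H$ meets the chord $L$ at the interior point $\theta$.

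Finally I would propagate this contact. Concavity, together with $H(0)=0=L(0)$ and $H(m)=L(m)$, gives $H\ge L$ on $[0,m]$. Now for any $s\in(0,\theta)$ write $\theta=\lambda s+(1-\lambda)m$ with $\lambda=\tfrac{m-\theta}{m-s}\in(0,1)$; then $L(\theta)=H(\theta)\ge\lambda H(s)+(1-\lambda)H(m)\ge\lambda L(s)+(1-\lambda)L(m)=L(\theta)$, so every inequality is an equality and, since $\lambda>0$ and $H(m)=L(m)$, we get $H(s)=L(s)$. The case $s\in(\theta,m)$ is handled the same way, writing $\theta$ as a convex combination of $0$ and $s$. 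Hence $H\equiv L$ on $[0,m]$, which is exactly the linearity of $H$ on $(0,m)$. I do not anticipate a genuine obstacle here; the only points deserving attention are that the sign hypothesis $H\ge 0$ is what produces both subadditivity and $H(0)=0$, and that the contact of a concave function with a chord at an \emph{interior} point must be exploited through a convex-combination identity rather than through any smoothness of $H$.
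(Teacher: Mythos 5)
Your proof is correct and follows essentially the same route as the paper's: both arguments reduce the hypothesis \(H(m)=H(m-\theta)+H(\theta)\) to equality in the concavity inequality comparing \(H\) with the chord through the origin (equivalently, constancy of the non-increasing map \(t\mapsto H(t)/t\)), and then propagate linearity by concavity. Your version is merely more detailed, proving subadditivity explicitly and spelling out the propagation of the chord contact, which the paper leaves to the reader.
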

\begin{proof} By concavity, \(t\mapsto \frac{H(t)}{t}\) is non-increasing. Hence,
\begin{align*}
H(m)=\theta\frac{H(m)}{m}+(m-\theta)\frac{H(m)}{m}\leq \theta\frac{H(\theta)}{\theta}+(m-\theta)\frac{H(m-\theta)}{m-\theta}.
\end{align*}
But, by assumption, the last inequality is an equality which means that \(\frac{H(m)}{m}=\frac{H(\theta)}{\theta}=\frac{H(m-\theta)}{m-\theta}\). In particular, the monotone function \(t\mapsto \frac{H(t)}{t}\) must be constant on \([\theta,m]\), i.e. \(H\) must be linear on \([\theta,m]\). By concavity this is only possible if \(H\) is linear on \([0,m]\).
\end{proof}

We can now state and prove our existence result:
\begin{proposition}\label{existenceProfile}
Assume that $f:\R \times \R^N \ni (u,\xi) \mapsto f(u,\xi)\in [0,+\infty]$ satisfies \labelcref{hypLowerContinuous}, \labelcref{hypConvex}, \labelcref{hypZero}, \labelcref{hypCompact} and \labelcref{hypSlope}. Let \(m\in \R_+\) (resp. \(m\in \R_-\)). If the cost function $H_f$, defined in \labelcref{PropertiesDefinitionH}, is not linear on \([0,m]\) (resp. $[m,0]$), then the minimization problem in \labelcref{PropertiesDefinitionH} admits a solution \(u\in W^{1,1}_\loc(\R^N)\), i.e. \(\int_{\R^N} u = m\) and \(\int_{\R^N}f(u,\nabla u)=H_f(m)\), such that $u\geq 0$ (resp. $u\leq 0$) in $\R^N$.
\end{proposition}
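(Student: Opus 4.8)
The plan is to run a concentration--compactness argument on a minimizing sequence for $H_f(x_0,m)$: the profile decomposition lower bound \Cref{almostMinimisers} splits the mass into ``bubbles'' plus a vanishing remainder, and the concavity of $H_f(x_0,\cdot)$ (\Cref{concaveDN}, \Cref{IndecomposableLinear}) forces, under the non-linearity hypothesis, exactly one bubble carrying all of the mass $m$ with no vanishing part; that bubble is the optimal profile. Assume $m>0$, the case $m<0$ being symmetric. If $H_f(x_0,m)=+\infty$ then every competitor attains the infimum and there is nothing to prove, so assume $H_f(x_0,m)<+\infty$, i.e. $H_f(x_0,\cdot)\not\equiv+\infty$. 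A short tail estimate shows that \labelcref{hypCompact} together with this finiteness forces $f(x_0,0,0)=0$ (otherwise, outside the finite-measure sets $\{u>\delta\}$ and $\{\abs{\nabla u}>\delta\}$ the integrand is bounded below by a fixed positive constant, so the energy is infinite), hence by \Cref{concaveDN}, $H_f(x_0,\cdot)$ is finite, continuous, concave and non-decreasing on $\R_+$ with $H_f(x_0,0)=0$. By \Cref{positivityConstraint}, fix a minimizing sequence $u_n\in W^{1,1}_\loc\cap L^1(\R^N,\R_+)$ with $\int_{\R^N}u_n=m$ and $\int_{\R^N}f(x_0,u_n,\nabla u_n)\to H_f(x_0,m)$.

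Next I would apply \Cref{almostMinimisers} to the constant family $f_\eps\equiv f$ (so \labelcref{hypGammaCv} is vacuous), with $\Phi_n\equiv x_0$, $R_n\equiv+\infty$, and radii $r_n\to+\infty$ chosen so that $\int_{B_{r_n}}u_n\to m$ (possible since $\int_{B_R}u_n\uparrow m$ as $R\to+\infty$ for each fixed $n$). This produces profiles $u^i\in W^{1,1}_\loc(\R^N,\R_+)$, $0\le i<k\le+\infty$, masses $m_i\coloneqq\int_{\R^N}u^i>0$ and a residual mass $m_v\ge 0$ with $m=m_v+\sum_{0\le i<k}m_i$, such that, with $s\coloneqq f'_-(x_0,0^+,0)=H_f'(x_0,0^+)$ (the corollary to \Cref{vanishingCost}, applicable since $H_f(x_0,\cdot)\not\equiv+\infty$),
\[
H_f(x_0,m)=\liminf_{n\to\infty}\int_{\R^N}f(x_0,u_n,\nabla u_n)\ge m_v\,s+\sum_{0\le i<k}\int_{\R^N}f(x_0,u^i,\nabla u^i)\ge m_v\,s+\sum_{0\le i<k}H_f(x_0,m_i),
\]
the last step being the definition of $H_f$. (If $s=+\infty$ this already forces $m_v=0$ since $H_f(x_0,m)<+\infty$.)

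To conclude, combine this with concavity: by \Cref{IndecomposableLinear}, $H_f(x_0,\cdot)$ is subadditive, hence countably subadditive by continuity, and $H_f(x_0,\theta)\le s\,\theta$ for every $\theta\ge 0$, so
\[
H_f(x_0,m)\le H_f(x_0,m_v)+\sum_{0\le i<k}H_f(x_0,m_i)\le m_v\,s+\sum_{0\le i<k}H_f(x_0,m_i)\le H_f(x_0,m),
\]
whence all inequalities are equalities. In particular $\int_{\R^N}f(x_0,u^i,\nabla u^i)=H_f(x_0,m_i)$ for each $i$, and $H_f(x_0,m)=H_f(x_0,m_v)+\sum_iH_f(x_0,m_i)$ with $m=m_v+\sum_im_i$. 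If two or more of $m_v,m_0,m_1,\dots$ were nonzero, isolating one of them (of mass $\theta$) and using subadditivity for the rest would give $H_f(x_0,m)=H_f(x_0,\theta)+H_f(x_0,m-\theta)$ with $0<\theta<m$, hence $H_f(x_0,\cdot)$ linear on $[0,m]$ by \Cref{IndecomposableLinear}, contrary to assumption; and if $k=0$ then $m=m_v$ and the chain forces $H_f(x_0,m)=s\,m$, again yielding linearity on $[0,m]$ (as $t\mapsto H_f(x_0,t)/t$ is then constant on $(0,m]$). Therefore $m_v=0$ and $k=1$, so $u\coloneqq u^0\in W^{1,1}_\loc\cap L^1(\R^N,\R_+)$ satisfies $\int_{\R^N}u=m$ and $\int_{\R^N}f(x_0,u,\nabla u)=H_f(x_0,m_1)=H_f(x_0,m)$, as desired. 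The main obstacle is the bridge between the analytic input and the combinatorial conclusion: verifying that \Cref{almostMinimisers} really applies to a minimizing sequence (the role of the truncation radii $r_n$ and of the constant maps $\Phi_n\equiv x_0$) and that $f'_-(x_0,0^+,0)$ coincides with the slope $H_f'(x_0,0^+)$; once these are in place, the concavity bookkeeping is routine.
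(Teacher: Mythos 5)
Your proof is correct and follows essentially the same strategy as the paper's: reduce to non-negative competitors, invoke \Cref{almostMinimisers} with $f_\eps\equiv f$, $\Phi_n\equiv x_0$, $R_n\equiv+\infty$ and suitable $r_n$, identify $f'_-(x_0,0^+,0)$ with $H_f'(x_0,0^+)$ via \Cref{slopeUpperBound}/\Cref{slopeUpperBound1D}, and rule out splitting or vanishing with the subadditivity of $H_f(x_0,\cdot)$ and \Cref{IndecomposableLinear}. The only addition is your explicit tail estimate showing that $H_f(x_0,m)<+\infty$ together with \labelcref{hypCompact} forces $f(x_0,0,0)=0$, which makes the invocation of \Cref{slopeUpperBound} (whose hypothesis includes $f(0,0)=0$) fully justified where the paper leaves it implicit.
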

\begin{proof}
We consider the case $m\geq 0$, the case $m<0$ can then be deduced by considering $\tilde{f}(u,\xi)=f(-u,-\xi)$. We assume without loss of generality that $H_f$ is finite on $(0,+\infty)$, otherwise by \Cref{concaveDN} there is nothing to prove. By \Cref{without_f_zero_at_zero}, the admissible class in \labelcref{PropertiesDefinitionH} can be reduced to non-negative functions. In particular, if \(m=0\), then \(u=0\) is the only non-negative solution. If \(m>0\), we apply \Cref{almostMinimisers} in the following situation: \(f_\eps(x,u,\xi)=f(u,\xi)\) for every \((x,u,\xi)\in\R^N\times\R\times\R^N,\varepsilon>0\), \(R_n\equiv +\infty\), \(\Phi_n\equiv x_0\in\R^N\), \((u_n)_{n\in\N}\) is a minimizing sequence for the minimization problem in \labelcref{PropertiesDefinitionH}, and \((r_n)_{n\in\N}\) is a sequence of positive radii going to \(+\infty\) such that \(\lim_{n\to\infty}\int_{B_{r_n}}u_n=m\). We obtain 
\[
H_f(m)\geq m_v f'_-(0^+,0)+\sum_{0\leq i<k}\int_{\R^N}f(u^i,\nabla u^i),
\]
with \(k\in\N\cup\{+\infty\}\), \(u^i\in W^{1,p}_\loc(\R^N,\R_+)\) and \(m=\sum_{0\leq i<k}m_i+m_v\), where \(m_i\coloneqq\int_{\R^N}u^i\). By \Cref{slopeUpperBound} and \Cref{slopeUpperBound1D}, in view of our assumption \labelcref{hypSlope}, and since $H_f$ is assumed to be finite on $(0,+\infty)$ (for the case $N=1$), we have \(f'_-(0^+,0)\geq H_f'(0^+)\). Moreover, by \Cref{concaveDN}, we have \(m_v H_f'(0^+)\geq H_f(m_v)\). Hence, by definition of \(H_f\),
\[
H_f(m)\geq H_f(m_v)+\sum_{0\leq i<k}H_f(m_i).
\]
Since the concave function \(H_f\) is not linear on \([0,m]\), by \Cref{IndecomposableLinear}, we have either \(k=1\) and \(m_v=0\), and we are done, or \(k=0\) and \(m=m_v\). But in the latter case, we would have \(H_f(m)=mH_f'(0^+)\) which implies that the monotone function \(t\mapsto \frac{H_f(t)}{t}\) is constant on \([0,m]\), i.e. that \(H_f\) is linear on \([0,m]\). This contradicts our assumption.
\end{proof}
\begin{remark}\label{compact_modulo}
Notice that the end of the proof actually shows, under the given assumptions, that the set of minimizers for a given mass $m$ is compact in $L^1$ modulo translations.
\end{remark}

\section{\texorpdfstring{$\Gamma$}{Γ}-convergence of the rescaled energies towards the \texorpdfstring{\(H\)-mass}{H-mass}}\label{sectionGammaConvergence}

We establish lower and upper bounds for the $\Gamma-\liminf$ and $\Gamma-\limsup$ respectively, from which we deduce the proof of our main $\Gamma$-convergence result. The upper bound on the $\Gamma-\limsup$ holds under more general assumptions and will be needed in \Cref{droplets}.

\subsection{Lower bound for the \texorpdfstring{\(\Gamma-\liminf\)}{Γ-liminf}}

Given a Borel function $f : \R^N \times \R \times \R^N \to [0,+\infty]$, we define for every $(x,m)\in \R^N\times \R$,
\begin{equation}
\label{definitionLowerCost}
H_f^-(x,m) \coloneqq H_f(x,m) \wedge (f'_-(x,0^\pm,0) \abs{m}), \quad\text{if $\pm m\geq 0$},
\end{equation}
recalling that \(H_f\) is defined in \labelcref{def_Hf} and \(f_{-}'(x,0^\pm,0)\) in \labelcref{slopeInf}, with the usual convention $(\pm\infty) \times 0 = 0$. Notice that it is concave on $\R_+$ and $\R_-$ by \Cref{concaveDN}, and under \labelcref{hypSlope} we have \(H_f^-(x,m) =H_f(x,m)\).

\begin{proposition}\label{lowerBound}
Assume that $(f_\eps)_{\eps > 0}$ is a family of functions \(f_\eps:\R^N\times\R\times\R^N\to[0,+\infty]\) satisfying \labelcref{hypLowerContinuous}, \labelcref{hypConvex}, \labelcref{hypZero}, \labelcref{hypCompact} and \labelcref{hypGammaCv} where $f=\lim_{\eps\to 0} f_\eps$. Let \((\eps_n)_{n\in\N}\) be a sequence of positive numbers going to zero, \((u_n)_{n\in\N}\) be a sequence in \(W^{1,1}_\loc(\R^N)\), and let 
\[
e_n \coloneqq f_{\eps_n}(\cdot,\eps_n^N u_{n},\eps_n^{N+1} \nabla u_n)\eps_n^{-N} \lbm^N
\]
be the energy measure associated with $u_n$. If \(u_n\lbm^N\mweakto \mu\in\Mspace(\R^N)\) and $e_n \mweakto e \in \Mspace(\R^N)$, then
\begin{equation}
\label{mainLowerBound}
e \geq H_f^-(\mu).
\end{equation}
In particular, \(\Gamma(\Cspace_0')-\liminf_{\eps \to 0} \ener_\eps \geq \mass^{H_f^-}\).
\end{proposition}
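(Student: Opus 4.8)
The plan is to establish the pointwise measure inequality $e\geq H_f^-(u)$ and then deduce the $\Gamma$-liminf bound. Since $e$ and $H_f^-(u)$ are nonnegative Radon measures, by the Besicovitch differentiation theorem it suffices to show
\[
\liminf_{r\to 0}\frac{e(\bar B_r(x_0))}{H_f^-(u)(\bar B_r(x_0))}\geq 1
\qquad\text{for }H_f^-(u)\text{-a.e. }x_0.
\]
Writing $H_f^-(u)=\sum_i H_f^-(x_i,m_i)\delta_{x_i}+f'_-(\cdot,0^+,0)u^d_++f'_-(\cdot,0^-,0)u^d_-$, a $H_f^-(u)$-typical point is either an atom $x_i$ of $u$ with $m_i=u(\{x_i\})$, or a density point of one of $u^d_\pm$ at which the remaining parts of $u$ are negligible. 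I would first pass to a subsequence realizing the $\liminf$ (so that the weak-$\star$ limits $u$ and $e$ exist), and suppose $u_n\geq 0$ a.e.; the general signed case follows by treating the positive and negative parts of $u_n$ with the Lagrangians $f$ and $(x,u,\xi)\mapsto f(x,-u,-\xi)$ respectively, as in \Cref{almostMinimisers} and \Cref{vanishingCost}, the masses $\int u_n^\pm$ being controlled by \labelcref{hypCompact} and the energy bound. Throughout I use that $m\mapsto H_f^-(x_0,m)$ is concave on $\R_+$ (a minimum of the concave function $H_f(x_0,\cdot)$ from \Cref{concaveDN} and the linear map $f'_-(x_0,0^+,0)\,\cdot$) and vanishes at $0$, hence is subadditive.

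\textbf{Atoms.} At an atom $x_0$ with mass $m_0$, the claim reduces to $e(\{x_0\})\geq H_f^-(x_0,m_0)$. I would run a diagonal argument: choose radii $\rho_j\downarrow 0$ with $u(\partial B_{\rho_j}(x_0))=e(\partial B_{\rho_j}(x_0))=0$ and $u(\bar B_{\rho_j}(x_0))\to m_0$, and integers $N_j\uparrow\infty$ so that for $n\geq N_j$ the masses and energies over $B_{\rho_j}(x_0)$ are close to their limits and $\eps_n\ll\rho_j$; then, with $k(n)$ defined by $n\in[N_{k(n)},N_{k(n)+1})$, set $w_n(y)\coloneqq\eps_n^N u_n(x_0+\eps_n y)$, $\Phi_n(y)\coloneqq x_0+\eps_n y$, inner radius $\eps_n r_n\coloneqq\rho_{k(n)}$ and outer radius $\eps_n R_n\coloneqq\rho_{k(n)}+\eps_n\lfloor\eps_n^{-1/2}\rfloor$. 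This yields $r_n,R_n-r_n\to\infty$, $\eps_n R_n\to 0$, $\int_{B_{r_n}}w_n\to m_0$ and $\limsup_n\int_{B_{R_n}}f_{\eps_n}(\Phi_n,w_n,\nabla w_n)=\limsup_n e_n\lbm^N(B_{\eps_n R_n}(x_0))\leq e(\{x_0\})$ by weak-$\star$ convergence. Applying \Cref{almostMinimisers} gives $e(\{x_0\})\geq|m_v|f'_-(x_0,0^+,0)+\sum_i\int_{\R^N}f(x_0,v^i,\nabla v^i)\geq H_f^-(x_0,m_v)+\sum_i H_f^-(x_0,m_i)$, with $m_0=m_v+\sum_i m_i$; subadditivity of $H_f^-(x_0,\cdot)$ then gives $e(\{x_0\})\geq H_f^-(x_0,m_0)$.

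\textbf{Diffuse part.} At a $u^d_+$-density point $x_0$ (the $u^d_-$ case being symmetric) the claim reduces to $\liminf_{r\to 0}e(\bar B_r(x_0))/u^d_+(\bar B_r(x_0))\geq f'_-(x_0,0^+,0)$. Fix $\delta>0$; using that $x\mapsto f'_-(x,0^+,0)$ is lower semicontinuous (a short diagonal argument from \labelcref{slopeInf}), pick $\rho_0$ with $f'_-(\cdot,0^+,0)>f'_-(x_0,0^+,0)-\delta=:s$ on $\bar B_{\rho_0}(x_0)$. For fixed $r<\rho_0$ with $u,e,u^d_+$ not charging $\partial B_r(x_0)$, rescale $w_n(y)\coloneqq\eps_n^N u_n(x_0+\eps_n y)$ on $\Omega_n\coloneqq B_{r/\eps_n}$ with $\Phi_n(y)\coloneqq x_0+\eps_n y$ taking values in $B_r(x_0)$, so that $\int_{\Omega_n}f_{\eps_n}(\Phi_n,w_n,\nabla w_n)=e_n\lbm^N(B_r(x_0))\to e(B_r(x_0))$ and $\int_{\Omega_n}w_n\to u(B_r(x_0))$. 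Apply \Cref{profileDecomposition} to $(w_n\lbm^N\mres\Omega_n)$ and, via \Cref{reduceRadius}, shrink the bubble radii $r_n^i$ so that they still diverge but $\eps_n r_n^i\to 0$; then each bubble collapses in original coordinates to a point $z_i\in\bar B_r(x_0)$ which must be an atom of $u$ with $\sum_i m_i\leq u^a(\bar B_r(x_0))$, so the vanishing remainder carries mass $m_v^{(r)}=u(B_r(x_0))-\sum_i m_i\geq u^d(B_r(x_0))$. Splitting the energy between the bubbles (each $\geq 0$) and the perforated vanishing region, and invoking \Cref{vanishingCost} on the latter — whose proof adapts verbatim when $\Phi_n$ ranges in a fixed ball, the limiting defect measure being supported on $\bar B_r(x_0)\times\{0\}\times\{0\}$ and yielding the bound $\inf_{\bar B_r(x_0)}f'_-(\cdot,0^+,0)\geq s$ — together with the monotone-convergence trick in $\eps$ used in \Cref{almostMinimisers}, one gets $e(B_r(x_0))\geq s\,m_v^{(r)}\geq s\,u^d(B_r(x_0))$. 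Dividing by $u^d(\bar B_r(x_0))$ and letting $r\to 0$ then $\delta\to 0$ gives the claim, using $u^a(\bar B_r(x_0))=o(u^d(\bar B_r(x_0)))$ and $\inf_{\bar B_r(x_0)}f'_-(\cdot,0^+,0)\to f'_-(x_0,0^+,0)$.

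\textbf{Conclusion and main obstacle.} The two cases give $e\geq H_f^-(u)$. For the $\Gamma$-liminf inequality, given $u_n\lbm^N\mweakto u$ one may assume $\liminf_n\ener_{\eps_n}(u_n)<+\infty$ and extract a subsequence with $\ener_{\eps_n}(u_n)=e_n\lbm^N(\R^N)\to\liminf$ and $e_n\lbm^N\mweakto e$; lower semicontinuity of the total variation and $e\geq H_f^-(u)$ give $\liminf_n\ener_{\eps_n}(u_n)\geq e(\R^N)\geq H_f^-(u)(\R^N)=\mass^{H_f^-}(u)$. The delicate point is the diffuse part: the blow-up tools \Cref{vanishingCost} and \Cref{almostMinimisers} force the spatial variable to collapse to a single point, while the diffuse contribution carries no mass at any individual point; this is resolved by localizing at a \emph{fixed} small scale $r$ (absorbing the $x$-dependence into the error $\delta$ via lower semicontinuity of $f'_-(\cdot,0^+,0)$) and by separating, through profile decomposition with shrunk bubble radii, the concentrating part — which unavoidably produces atoms of $u$ of total mass $\leq u^a$ — from the genuinely vanishing remainder that carries the diffuse mass.
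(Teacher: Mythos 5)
Your reduction to the two pointwise claims and your treatment of the atomic part coincide with the paper's proof: same diagonal choice of radii not charging boundaries, same rescaling, same application of \Cref{almostMinimisers} followed by subadditivity of the mass-concave function $H_f^-(x_0,\cdot)$. Where you genuinely diverge is the diffuse part. The paper's argument there is a one-step observation that you missed: at a non-atomic point $x_0\in\supp u$ one blows up along radii $r_\ell\to 0$, so the rescaled masses $\int_{B_{r_\ell/\eps_{n_\ell}}}v_\ell=\int_{B_{r_\ell}(x_0)}u_{n_\ell}\to u(\{x_0\})=0$; a sequence whose total mass tends to zero is automatically vanishing, and \Cref{vanishingCost} applies \emph{as stated}, with $\Phi_\ell(y)=x_0+\eps_{n_\ell}y$ collapsing to $x_0$ because $r_\ell\to 0$. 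No profile decomposition, no fixed-scale localization, no identification of bubbles with atoms of $u$, and no lower semicontinuity of $x\mapsto f'_-(x,0^+,0)$ are needed. Your fixed-scale route (localize at scale $r<\rho_0$, decompose into bubbles that collapse to atoms, treat the perforated remainder) can be pushed through, but at the price of re-proving a variant of \Cref{vanishingCost} and of several bookkeeping steps (e.g. that possibly distinct bubbles collapsing to the same point still give $\sum_i m_i\leq u^a(\bar B_r(x_0))$), all of which the paper's choice of shrinking radii makes unnecessary.

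The one point in your diffuse argument that is not ``verbatim'' and needs a real repair is the interaction between the $\eps$-approximation and the spatial infimum. If, as you suggest, you fix $\eps$, apply the adapted \Cref{vanishingCost} to $f_\eps$ on the perforated domain, and then let $\eps\to 0$ by monotone convergence as in \Cref{almostMinimisers}, you obtain the lower bound $\sup_{\eps}\inf_{\bar B_r(x_0)}(f_\eps)'_-(\cdot,0^+,0)$, and the inequality $\sup_\eps\inf_x\leq\inf_x\sup_\eps$ goes the wrong way: \labelcref{hypGammaCv} gives pointwise monotone convergence $(f_\eps)'_-\uparrow f'_-$, which does not pass to infima over the ball. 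In \Cref{almostMinimisers} this issue does not arise because the bound is evaluated at the single point $x_0$. The fix is to extract once a narrow limit $\nu$ of the measures $\nu_n=\theta_n^{-1}(\Phi_n,w_n,\nabla w_n)_\sharp(w_n\lbm^N\mres\Omega_n)$ (which do not depend on $\eps$), note that $\nu$ is a probability measure supported on $\bar B_r(x_0)\times\{0\}\times\{0\}$, bound the liminf by $\int g_\eps\dd\nu$ for each $\eps$ with $g_\eps$ the relevant lower semicontinuous envelope, and only then apply monotone convergence under the integral to reach $\int\sup_\eps g_\eps\dd\nu\geq\inf_{\bar B_r(x_0)}f'_-(\cdot,0^+,0)$. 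With that correction, and with the usual truncation if $f'_-(\cdot,0^\pm,0)$ is not locally bounded (so that $H_f^-(u)$ is a Radon measure to which Besicovitch differentiation applies), your argument closes; but the paper's shrinking-radius blow-up is both shorter and avoids the issue entirely.
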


\begin{proof}[Proof of \Cref{lowerBound}]
Set $H \coloneqq H_f^-$ and recall that it is concave on $\R_+$ and $\R_-$ by \Cref{concaveDN}. Let us assume first that $u_n \geq 0$ a.e. for every $n$. To obtain \labelcref{mainLowerBound}, it is enough to prove that for every \(x_0\in\R^N\),
\begin{equation}\label{atomic}
e(\{x_0\})\ge H(x_0,\mu(\{x_0\})).
\end{equation}
and that if \(x_0 \in \supp \mu\) is not an atom of $\mu$, then
\begin{equation}\label{diffuse}
\limsup_{R\to 0^+}\frac{e(B_R(x_0))}{\mu(B_R(x_0))}\ge H'(x_0,0^+),
\end{equation}
Indeed \labelcref{atomic} implies that $e \geq (H(\mu))^a$ (the atomic part of the measure \(H(\mu)\) defined in \Cref{defHmass}) while \labelcref{diffuse} implies that $e \geq H'(\cdot,0^+) \mu^d = (H(\mu))^d$, by Radon-Nikod\`ym theorem (see \cite[Theorem~2.22]{ambrosioFunctionsBoundedVariation2000}); these two relations yield \(e\geq (H(\mu))^a+(H(\mu))^d=H(\mu)\) as required.

\medskip

We fix $x_0 \in \supp \mu$ and proceed in several steps.

\medskip
\noindent\emph{Step 1: blow-up near \(x_0\).} We first take two sequences of positive radii \((R_\ell)_{\ell\in\N}\to 0\) and \((r_\ell)_{\ell\in\N}\) such that for every \(\ell\in\N\), \(r_\ell\in (0,R_\ell)\),
\begin{gather}
e(\partial B_{R_\ell}(x_0))=\mu(\partial B_{r_\ell}(x_0))=0,\label{seqBoundary}\\
\shortintertext{and}
\lim_{\ell\to\infty}\frac{e(B_{R_\ell}(x_0))}{\mu(B_{r_\ell}(x_0))}
=\limsup_{R\to 0^+}\frac{e(B_R(x_0))}{\mu(B_R(x_0))}.\label{seqUpperDensity}
\end{gather}
This last property is obtained by taking first a sequence $(\rho_\ell)_\ell$ such that 
\[
\limsup_{R\to 0^+}\frac{e(B_R(x_0))}{\mu(B_R(x_0))} = \lim_{\ell\to\infty} \frac{e(B_{\rho_\ell}(x_0))}{\mu(B_{\rho_\ell}(x_0))},
\]
then using monotone convergence the measures to get first \(r_\ell\) then \(R_\ell\) such that $0 < r_\ell < R_\ell < \rho_\ell$, \(\mu(B_{r_\ell}(x_0))\geq (1-2^{-\ell})\mu(B_{\rho_\ell}(x_0))\) and \(e(B_{R_\ell}(x_0))\geq (1-2^{-\ell})e(B_{\rho_\ell}(x_0))\).

By weak convergence and \labelcref{seqBoundary}, according to \cite[Proposition~1.62~b)]{ambrosioFunctionsBoundedVariation2000}, we have for every \(\ell\in\N\),
\[
\lim_{n\to\infty}e_n(B_{R_\ell}(x_0))=e(B_{R_\ell}(x_0))\quad\text{and} \quad \lim_{n\to\infty}\int_{B_{r_\ell}(x_0)}u_n=\mu(B_{r_\ell}(x_0)).
\]
Hence, there exists an extraction \((n_\ell)_{\ell\in\N} \in \Sigma\) such that 
\begin{equation}\label{radiiCovering}
\lim_{\ell\to\infty}\frac{r_\ell}{\eps_{n_\ell}}=+\infty \quad \text{and}\quad \lim_{\ell\to\infty}\frac{R_\ell-r_\ell}{\eps_{n_\ell}}=+\infty,
\end{equation}
satisfying the following conditions:
\begin{gather}
\label{atomsMassEnergy}
\mu(\{x_0\})=\lim_{\ell\to\infty}\int_{B_{r_\ell}(x_0)}u_{n_\ell}, 
\quad
e(\{x_0\}) = \lim_{\ell\to\infty} e_{n_\ell}(B_{R_\ell}(x_0)),
\\
\shortintertext{and}
\label{densityExtraction}
\limsup_{\ell\to\infty}\frac{e(B_{R_\ell}(x_0))}{\mu(B_{r_\ell}(x_0))}
=\lim_{\ell\to\infty}\frac{e_{n_\ell}(B_{R_\ell}(x_0))}{\int_{B_{r_\ell}(x_0)}u_{n_\ell}}.
\end{gather}

We may rewrite the mass and energy in terms of the re-scaled map \(v_\ell\) defined by
\begin{equation}
v_\ell(y)\coloneqq \eps_{n_\ell}^N u_{n_\ell}(x_0+\eps_{n_\ell} y),\quad y\in\R^N, \ell\in\N,
\end{equation}
as follows:
\begin{gather}\label{expressionRescaledMass}
\int_{B_{r_\ell}(x_0)}u_{n_\ell}=\int_{B_{\eps_{n_\ell}^{-1}r_{\ell}}}v_\ell,\\
\shortintertext{and}
\label{expressionRescaledEnergy}
e_{n_\ell}(B_{R_\ell}(x_0))=\int_{B_{\eps_{n_\ell}^{-1}R_{\ell}}}f_{\eps_{n_\ell}}(x_0+\eps_{n_\ell} y,v_{\ell}(y),\nabla v_{\ell}(y))\dd y.
\end{gather}

\noindent\emph{Step 2: proof of \labelcref{atomic}.} By \Cref{almostMinimisers}, we have
\begin{equation}
\label{lowerEnergyBound}\begin{aligned}
e(\{x_0\}) &= \lim_{\ell\to\infty} \int_{B_{\eps_{n_\ell}^{-1}R_{\ell}}}f_{\eps_{n_\ell}}(x_0+\eps_{n_\ell} y,v_{\ell}(y),\nabla v_{\ell}(y))\dd y\\
&\geq m_v f'_{-}(x_0,0^+,0)+\sum_{0\leq i<k} H_f(x_0,m_i).\end{aligned}
\end{equation}
Here \(k\in\N\cup\{+\infty\}\) and \(m=m_v+\sum_{0\leq i<k}m_i\), with  \(m_i> 0\), \(m_v\geq 0\) and
\[
m=\lim_{\ell\to\infty} \int_{B_{\eps_{n_\ell}^{-1}r_{\ell}}}v_\ell =\mu(\{x_0\}).
\]

Since the function \(H=H_f^-\), defined in \labelcref{definitionLowerCost}, is the infimum of two functions which are mass-concave, it is mass-concave hence subadditive. From \labelcref{lowerEnergyBound} we thus arrive at
\[
e(\{x_0\})\geq H(x_0,m_v) + \sum_{0\leq i<k}H(x_0,m_i) \geq H\Bigl(x_0,m_v+\sum_{0\leq i<k}m_i\Bigr)=H(x_0,\mu(\{x_0\})).
\]

\medskip
\noindent\emph{Step 3: proof of \labelcref{diffuse}.} Fix $\eps >0$ and assume that \(m=\mu(\{x_0\})=0\). In that case, we apply \Cref{vanishingCost} to the sequence of functions $(v_\ell)_{\ell\in\N}$ defined on the sets $\Omega_\ell = B_{\eps_{n_\ell}^{-1}r_\ell}$ and the function $f_\eps$ to get, thanks to \labelcref{hypGammaCv}:

\begin{align*}
\limsup_{R\to 0^+}\frac{e(B_R(x_0))}{\mu(B_R(x_0))} &= \lim_{\ell\to \infty} \frac{e_{n_\ell}(B_{R_\ell}(x_0))}{\int_{B_{r_\ell}(x_0)} u_{n_\ell}}\\
&\geq \liminf_{\ell\to\infty}\frac{1}{\int_{B_{\eps_{n_\ell}^{-1}r_\ell}} v_{\ell}}\int_{B_{\eps_{n_\ell}^{-1}r_\ell}} f_{\eps}(x_0+\eps_{n_\ell}y,v_\ell(y),\nabla v_\ell(y)) \\
&\geq (f_\eps)_-'(x_0,0^+,0).
\end{align*}
Taking the limit $\eps \to 0^+$, we deduce by \labelcref{hypGammaCv} and \labelcref{definitionLowerCost}:
\begin{equation}\label{vanishingDiffuse}
\limsup_{R\to 0^+}\frac{e(B_R(x_0))}{\mu(B_R(x_0))} \geq f_-'(x_0,0^+,0) \geq H'(x_0,0^+).
\end{equation}
In view of the discussion at the beginning of the proof, we have now proved \labelcref{mainLowerBound}. 

\medskip
\noindent\emph{Step 4: proof of \labelcref{mainLowerBound} for signed $(u_n)_n$.} Notice that the preceding reasoning for non-negative $u_n$ applies also to the case of non-positive $u_n$. Let us handle the case where the $(u_n)$'s may change sign. We simply apply the above cases to the positive and negative parts $((u_n)_\pm)_n$ which converge weakly as measures (up to subsequence) to some measures $\mu^\pm \in \Mspace_+(\R^N)$ which satisfy $\mu = \mu^+ - \mu^-$, so that $e\geq H(\pm \mu^\pm)$. We know that the Jordan decomposition $\mu = \mu_+-\mu_-$ is minimal, so that $\mu_\pm \leq \mu^\pm$ and $\mu_+ \perp \mu_-$. By monotonicity of the function $H_f$ (see \Cref{concaveDN}), we have $e\geq H(\pm \mu^\pm) \geq H(\pm \mu_\pm)$. Since $H(\mu_+) \perp H(- \mu_-)$, we get
\[e \geq H(\mu_+) + H(-\mu_-) = H(\mu).\]

\medskip
\noindent \emph{Step 5: lower bound for the \(\Gamma-\liminf\).} We justify that \labelcref{mainLowerBound} implies the lower bound \(\Gamma(\Cspace_0')-\liminf_{\eps \to 0} \ener_\eps \geq \mass^{H}\). Indeed, fix $\mu \in \Mspace(\R^N)$ and consider a family $(u_\eps)_{\eps>0}$ weakly converging to $\mu$ as $\eps \to 0$. We need to show that $\mass^H(\mu) \leq \liminf_{\eps \to 0} \ener_\eps(u_\eps)$. Assume without loss of generality that the inferior limit is finite and take a sequence of positive numbers \((\eps_n)_{n\in\N}\to 0\) such that this inferior limit is equal to $\lim_{n\to \infty} \ener_{\eps_n}(u_{\eps_n})$. Now the energy measure \(e_n\) associated with \(u_n=u_{\eps_n}\) has bounded mass and up to extracting a subsequence one may assume that it converges weakly to some measure $e\in \Mspace_+(\R^N)$. By the previous steps, $e \geq H(\mu)$, and by lower semicontinuity and monotonicity of the mass:
\[\liminf_{\eps\to 0^+} \ener_\eps(u_\eps) = \liminf_{n\to\infty} \norm{e_n} \geq \norm{e} \geq \norm{H(\mu)} = \mass^H(\mu).\qedhere\]
\end{proof}

\subsection{Upper bound for the \texorpdfstring{$\Gamma-\limsup$}{Γ-limsup}}

In this section, we introduce the following substitute for \labelcref{hypContinuous}, \labelcref{hypQuasiHomogeneity} and \labelcref{hypGammaCv}, where $f,(f_\eps)_{\eps>0}$ are Borel maps from $\R^N\times \R \to \R^N$ to $[0,+\infty]$:
\begin{enumerate}[(U)]
\item\label{hypUpperGammaLimit} there exists \(C<+\infty\) such that for every \(x,y\in\R^N\), \(u\in\R\) and \(\xi\in\R^N\), 
\[
\limsup_{\eps\to 0^+}f_\eps(x+\eps y,u,\xi)\leq f(x,u,\xi)\quad\text{and}\quad f_\eps(y,u,\xi)\leq C(f(x,u,\xi)+u)\quad\forall\eps>0.
\]
\end{enumerate}

\begin{proposition}\label{limsup_upper_bound}
Assume that $f,(f_\eps)_{\eps > 0}$ satisfy \labelcref{hypUpperGammaLimit} and \labelcref{hypZero}. If \(\mu \in \Mspace(\R^N)\), then there exists \((u_\eps)_{\eps>0} \in W^{1,1}_\loc(\R^N)\) such that $u_\eps \lbm^N \narrowto \mu$ when \(\eps\to 0\) and which satisfies
\[
\limsup_{\eps\to 0^+}\ener_\eps(u_\eps) \leq \mass^{H_{f,\lsc}}(\mu),
\]
where \(H_{f,\lsc}\leq H_f\) stands for the lower semicontinuous envelope of \(H_f\), defined in \labelcref{semicontinuousEnvelope}. In other words, we have \(\Gamma(\Cspace_b')-\limsup_{\eps \to 0} \ener_\eps \leq \mass^{H_{f,\lsc}}\).
\end{proposition}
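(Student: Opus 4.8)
The plan is to build the recovery sequence in three stages: first for a single atom, then for finite combinations of atoms (handling the atomic part $u^a$), and finally to pass to general measures by a diagonal/density argument exploiting the narrow lower semicontinuity of $\mass^{H_{f,\lsc}}$. First I would treat the \emph{model case} $u = m\delta_{x_0}$ with $m\geq 0$ (the case $m\leq 0$ being symmetric). Given $\eta>0$, pick $w\in W^{1,1}_\loc\cap L^1(\R^N,\R_+)$ with $\int_{\R^N} w = m$ and $\ener_{f}^{x_0}(w) = \int_{\R^N} f(x_0,w,\nabla w) \leq H_f(x_0,m)+\eta$; by truncating the (unbounded) support we may assume $w$ is supported in some ball $B_\rho$, at the cost of another $\eta$ using \labelcref{hypZero} and dominated convergence as in the proof of \Cref{concaveDN}. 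Then set $u_\eps(x) \coloneqq \eps^{-N} w(\eps^{-1}(x-x_0))$, which has $\int_{\R^N} u_\eps = m$, is supported in $B_{\eps\rho}(x_0)$, hence $u_\eps\lbm^N \narrowto m\delta_{x_0}$, and by the scaling identity recalled in the introduction,
\[
\ener_\eps(u_\eps) = \int_{B_\rho} f_\eps(x_0+\eps y, w(y),\nabla w(y))\dd y.
\]
Applying the first half of \labelcref{hypUpperGammaLimit} pointwise together with the domination $f_\eps(x_0+\eps y,w,\nabla w)\leq C(f(x_0,w,\nabla w)+w)\in L^1(B_\rho)$ from the second half of \labelcref{hypUpperGammaLimit}, reverse Fatou gives $\limsup_{\eps\to 0}\ener_\eps(u_\eps) \leq \ener_f^{x_0}(w) \leq H_f(x_0,m)+2\eta$. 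Letting $\eta\to 0$ along a diagonal yields a recovery sequence with $\limsup_\eps \ener_\eps(u_\eps)\leq H_f(x_0,m)$ for the single atom.

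Next I would handle $u = u^a = \sum_{i=1}^k m_i\delta_{x_i}$, a \emph{finite} sum of atoms: for $\eps$ small the balls $B_{\eps\rho_i}(x_i)$ are disjoint, so I superpose the individual recovery sequences $u_\eps^i$ and, since the energy is an integral over disjoint supports plus the contribution of the region where $u_\eps\equiv 0$ (which costs nothing by \labelcref{hypZero}), I get $\limsup_\eps\ener_\eps(\sum_i u_\eps^i) \leq \sum_i H_f(x_i,m_i) = \mass_{\mathrm{atom}}^{H_f}(u)$. To pass from $\mass_{\mathrm{atom}}^{H_f}$ to $\mass_{\mathrm{atom}}^{H_{f,\lsc}}$, I approximate: given an atom $m_i\delta_{x_i}$, choose $(x_{i,n},m_{i,n})\to(x_i,m_i)$ with $H_f(x_{i,n},m_{i,n})\to H_{f,\lsc}(x_i,m_i)$, so a further diagonalization produces, for every finite combination, a recovery sequence with $\limsup \leq \mass_{\mathrm{atom}}^{H_{f,\lsc}}(u)$. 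Finally, for a general $u\in\Mspace(\R^N)$, I invoke \Cref{relaxationMass}: since $H_{f,\lsc}$ is mass-concave with $H_{f,\lsc}(\cdot,0)\equiv 0$, the functional $\mass^{H_{f,\lsc}}$ is the narrow sequentially lower semicontinuous envelope of $\mass_{\mathrm{atom}}^{H_{f,\lsc}}$; moreover the discretization step in the proof of \Cref{relaxationMass} exhibits an explicit sequence of finitely atomic measures $u_n \narrowto u$ with $\limsup_n \mass_{\mathrm{atom}}^{H_{f,\lsc}}(u_n) \leq \mass^{H_{f,\lsc}}(u)$. For each such $u_n$ take the recovery family $(u_{n,\eps})_\eps$ just constructed; a standard diagonal extraction over $n$ and $\eps$, using metrizability of narrow convergence on bounded sets of $\Mspace(\R^N)$ (all measures involved have controlled mass, bounded by $\mass^{H_{f,\lsc}}(u)/H_{f,\lsc}'(\cdot,0^\pm)$-type estimates or simply by $\|u_n\|\to\|u\|$), produces $(u_\eps)_\eps$ with $u_\eps\lbm^N\narrowto u$ and $\limsup_\eps\ener_\eps(u_\eps)\leq\mass^{H_{f,\lsc}}(u)$.

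The main obstacle I anticipate is \textbf{not any single estimate but the bookkeeping of the nested diagonal arguments}: one must simultaneously let the sub-optimality parameter $\eta\to 0$, the support-truncation error $\to 0$, the atom-approximation $(x_{i,n},m_{i,n})\to(x_i,m_i)$, the discretization index $n\to\infty$, and $\eps\to 0$, while keeping narrow convergence to $u$ intact. The clean way to organize this is to prove the statement first on the class of finitely atomic measures (where everything is explicit and the $\limsup$ bound is $\mass_{\mathrm{atom}}^{H_{f,\lsc}}$), observe that the $\Gamma$-$\limsup$ functional $\Gamma(\Cspace_b')\text{-}\limsup_\eps\ener_\eps$ is automatically narrowly sequentially lower semicontinuous, and then conclude on all of $\Mspace(\R^N)$ by the relaxation identity of \Cref{relaxationMass}, which is exactly tailored to this purpose. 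A secondary technical point is ensuring the truncated profile $w$ is genuinely in $W^{1,1}_\loc$ with the right mass after cutting off its tail — this is the same kind of slicing-by-superlevel-sets manipulation used in the proofs of \Cref{concaveDN} and \Cref{slopeUpperBound1D}, so it presents no new difficulty.
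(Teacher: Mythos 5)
Your overall strategy is the same as the paper's: reduce to finitely atomic measures by combining the automatic narrow sequential lower semicontinuity of the upper $\Gamma$-limit with the relaxation identity of \Cref{relaxationMass}, then build explicit rescaled near-optimal profiles at each atom and pass to the limit using \labelcref{hypUpperGammaLimit} and dominated convergence. The last two stages of your plan (passing from $H_f$ to $H_{f,\lsc}$ atom by atom, then discretizing a general $u$ and diagonalizing) are redundant once you have $F\coloneqq\Gamma(\Cspace_b')\text{-}\limsup_\eps\ener_\eps\leq\mass^{H_f}_{\mathrm{atom}}$ on finitely atomic measures: since $F$ is narrowly sequentially l.s.c.\ and bounded above by $\mass_{\mathrm{atom}}^{H_f}$, \Cref{relaxationMass} gives $F\leq\mass^{H_{f,\lsc}}$ on all of $\Mspace(\R^N)$ in one stroke, which is exactly how the paper concludes.

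There is, however, one genuine gap in the step you dismiss as routine: the truncation of the near-optimal profile $w$ to a ball $B_\rho$. You need compactly supported profiles only because you superpose the atomic recovery families by \emph{summation}, which requires disjoint supports (the energy of a sum is not controlled by the sum of the energies on an overlap region). But under the standing hypotheses \labelcref{hypZero} and \labelcref{hypUpperGammaLimit} alone, $f(x_0,\cdot,\cdot)$ need be neither continuous away from $(0,0)$ nor monotone in $u$, so none of the available truncations work: multiplying by a cutoff $\chi_R$ produces the uncontrolled gradient term $w\nabla\chi_R$ inside $f$; replacing $w$ by $(w-\delta)_+$ changes the first argument of $f$ from $w$ to $w-\delta$ with no comparison available; and the slicing $w\wedge w(\cdot+tv)$ from the proof of \Cref{concaveDN} sends \emph{both} the mass and the energy to $0$ as $t\to\infty$ rather than preserving them up to $\eta$. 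The paper sidesteps the issue entirely by setting $u_\eps=\max_i u_\eps^i$: since a.e.\ on $\{u_\eps=u_\eps^i\}$ the pair $(u_\eps,\nabla u_\eps)$ coincides with $(u_\eps^i,\nabla u_\eps^i)$, one always has $\ener_\eps(u_\eps)\leq\sum_i\ener_\eps(u_\eps^i)$ with no disjointness of supports needed, and each term is handled exactly as in your single-atom computation. Replace your sum by this maximum (or supply a hypothesis-free truncation argument, which I do not see) and the proof goes through.
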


\begin{proof}[Proof of \Cref{limsup_upper_bound}]
Let \(F=\Gamma(\Cspace_b')-\limsup_{\eps \to 0}\ener_\eps\). As an upper \(\Gamma\)-limit, \(F\) is sequentially lower semicontinuous in the narrow topology. Hence, by \Cref{relaxationMass}, it is enough to prove that \(F(\mu)\leq \mass^{H_f}(\mu)\) whenever \(\mu\) is finitely atomic. Let $\mu = \sum_{i=1}^k m_i \delta_{x_i}$ with \(k\in\N\), \(m_i\in\R\), \(x_i\in\R^N\), and assume without loss of generality that \(x_i\neq x_j\) when \(i\neq j\) and \(\mass^{H_f}(\mu)<+\infty\). Fix \(\eta>0\). For each \(i=1,\dots,k\), there exists \(u_i\in W^{1,1}_\loc(\R^N)\) such that \(\int_{\R^N}u_i=m_i\) and \(\int_{\R^N}f(x_i,u_i,\nabla u_i)\leq H(x_i,m_i)+\eta < +\infty\). We define for every $i=1,\dots,k$,
\begin{gather}
u^i_\eps(x) = \eps^{-N}u_i(\eps^{-1}(x-x_i)), \quad x\in \R^N,
\shortintertext{and}
u_\eps=\max\{u^i_\eps \st i=1,\dots,k\},
\end{gather}
which converge narrowly as measures to $u$ as $\eps \to 0$. We have by change of variables:
\begin{align*}\label{EstimateUpper}
\ener_\eps(u_\eps) &\leq \sum_{i=1}^k \int_{\{u_\eps = u_\eps^i\}} f_\eps(x,\eps^N u_\eps^i(x),\eps^{N+1}\nabla u_\eps^i(x))\eps^{-N} \dd x\\
&\leq \sum_{i=1}^k\ener_\eps(u_\eps^i)=\sum_{i=1}^k\int_{\R^N}f_\eps(x_i+\eps x,u_i,\nabla u_i).
\end{align*}
Using our assumption \labelcref{hypUpperGammaLimit} and the dominated convergence theorem, one gets as \(\eps\to 0\):
\[
F(\mu)\leq\limsup_{\eps\to 0}\ener_\eps(u_\eps)\leq \sum_{i=1}^k\int_{\R^N}f(x_i,u_i,\nabla u_i)
\leq\sum_{i=1}^kH(x_i,m_i)+k\eta=\mass^H(\mu)+k\eta.
\]
The conclusion follows by arbitrariness of \(\eta>0\).
\end{proof}

\subsection{Proof of the main \texorpdfstring{\(\Gamma\)-convergence}{Γ-convergence} result}
We now explain how \Cref{mainGammaConvergence} follows from \Cref{lowerBound} and \Cref{limsup_upper_bound}.

\begin{proof}[Proof of \Cref{mainGammaConvergence}]
The lower bound $\Gamma(\Cspace_0')-\liminf_{\eps \to 0} \ener_\eps \geq \mass^{H_f^-}$ follows from \Cref{lowerBound}, and the upper bound \(\Gamma(\Cspace_b')-\limsup_{\eps \to 0} \ener_\eps \leq \mass^{H_{f,\lsc}}\) from \Cref{limsup_upper_bound}, where the assumption \labelcref{hypUpperGammaLimit} is a consequence of \labelcref{hypContinuous}, \labelcref{hypQuasiHomogeneity} and \labelcref{hypGammaCv}. By \labelcref{hypSlope} and \Cref{concaveDN}, $H_f^- = H_f$, and since $H_f\geq H_{f,\lsc}$ by definition, both $\Gamma-\liminf$ and $\Gamma-\limsup$ (for weak and narrow topologies) coincide with $\mass^{H_f}$.
\end{proof}

\section{Examples, counterexamples and applications}\label{sectionExamples}

\subsection{Scale-invariant Lagrangians and necessity of the slope assumption}\label{sectionCounterExamples}

Our assumption \labelcref{hypSlope} is not very standard, but we need a condition of this type in order to get \(\Gamma\)-convergence of the rescaled energies \(\ener_\eps\) towards \(\mass^{H_f}\), as shown by the following class of scale-invariant Lagrangians:
\begin{equation}
\label{gagliardoExample}
f_\eps(x,u,\xi)= f(u,\xi)
\quad \text{with}\quad 
f(u,\xi)= \begin{dcases*}
u^{p(\frac{1}{p^\star}-1)}\abs{\xi}^p& if $u> 0$,\\
0& else, \end{dcases*}
\end{equation}
where \(p\in (1,N)\), \(N\in\N^\ast\) and $p^\star\coloneqq\frac{pN}{N-p}$. By straightforward computations, \(\ener_\eps(u)= \ener_f(u)\coloneqq \int_{\R^N}f(u,\nabla u)\) for every \(\eps>0\) and \(u\in W^{1,p}_\loc(\R^N)\) in that case.

Moreover, the associated minimal cost function \(H_f\) is not trivial. Indeed, applying the Gagliardo--Nirenberg--Sobolev inequality,
\[
\Big(\int_{\R^N}\abs{v}^{p^\star}\Big)^{\frac{1}{p^\star}}\leq 
C\Big(\int_{\R^N}\abs{\nabla v}^p\Big)^{\frac{1}{p}},\quad\forall v\in L^{p^\star} \cap W^{1,1}_\loc(\R^N),
\]
to the function\footnote{Actually, we apply it to $v_\eps = \phi_\eps(u)$ where $\phi_\eps$ is a suitable approximation of $(\cdot)^{\frac 1{p^\star}}$ and take $\eps\to 0$.} \(v=u^{\frac{1}{p^\star}}\), we obtain that for every \(u\in W^{1,1}_\loc(\R^N,\R_+)\cap L^1(\R^N)\),
\[
\left(\int_{\R^N}u\right)^{\frac{p}{p^\star}}\leq \left(\frac{C}{p^\star}\right)^p \int_{\{u>0\}}u^{\frac{p}{p^\star}-p}\abs{\nabla u}^p=\left(\frac{C}{p^\star}\right)^p\ener_f(u). 
\]
Hence, for every \(m>0\), we have \(H_f(m)>0\), and even \(H_f(m)<+\infty\) since any function \(u=v^{p^\star}\), with \(v\in W^{1,p}(\R^N,\R_+)\), has finite energy. Replacing \(u\) by \(mu\) in the infimum defining \(H_f\) in \labelcref{def_Hf}, we actually obtain
\begin{equation}
\label{gagliardoHmass}
H_f(m)=m^{1-\frac{p}{N}}H_f(1),\quad 0<H_f(1)<+\infty.
\end{equation}
In that case, it is clear that the \(\Gamma\)-limit of \(\ener_\eps\equiv\ener\) in the weak or narrow topology of \(\Mspace_+(\R^N)\), that is the lower semicontinuous relaxation of \(\ener_f\), does not coincide with \(\mass^{H_f}\); indeed, the first functional is finite on diffuse measures whose density has finite energy, while the second functional is always infinite for non-trivial diffuse measures since \(H_f'(0^+)=+\infty\).

These scaling invariant Lagrangians are ruled out by our assumption \labelcref{hypSlope}. All the other assumptions are satisfied except \labelcref{hypCompact}. Note that the following perturbation of \(f\),
\[
\Tilde{f}(u,\xi)=\bigl(1+u^{p(\frac{1}{p^\star}-1)}\bigr)\abs{\xi}^p
\] 
satisfies all the assumptions except \labelcref{hypSlope}, and provides a counterexample to the \(\Gamma\)-convergence. Indeed, \(\mass_{H_{\Tilde{f}}}\geq \mass_{H_f}\) is still infinite on diffuse measures, while (the relaxation of) \(\ener_{\Tilde{f}}\) is finite for any diffuse measure whose density has finite energy.

We stress that an assumption like \labelcref{hypSlope} is actually needed, even for the lower semicontinuity of the function \(H_f\) -- recall that if \(\mass_{H_f}\) is a \(\Gamma\)-limit, then it must be lower semicontinuous by \cite[Proposition~1.28]{braidesGammaConvergenceBeginners2002}, which in turn implies that the function \(H_f\) is lower semicontinuous by \Cref{relaxationMass}. Indeed, consider the Lagrangians
\[
f(x,u,\xi)=\bigl(1+u^{p(\frac{1}{p^\star}-1)}\bigr)\abs{\xi}^{p(x)},
\]
with \(p\in\Cspace^0(\R^N,(1,N))\) such that \(p(0)=p\in (1,N)\) and \(p(x)>p\) when \(x\neq 0\). Then, we have \(H_f(0,m)=m^{1-\frac{p}{N}}H(1)\), but \(H_f(x,\cdot)\equiv 0\) if \(x\neq 0\) as can be easily seen via the change of function \(\eps^N u(\eps\,\cdot)\), with \(\eps>0\) small.

\subsection{General concave costs in dimension one}\label{generalCosts1d}

It has been proved in \cite{wirthPhaseFieldModels2019} that for any continuous concave function \(H:\R_+\to\R_+\) with \(H(0)=0\), there exists a function \(c:\R_+\to\R_+\) such that \(c(0)=0\), \(u\mapsto \frac{c(u)}{u}\) is lower semicontinuous and non-increasing on \((0,+\infty)\), and for every $m \geq 0$,
\[
H(m)=\inf\left\{\int_\R\abs{u'}^2+c(u)\st u\in W^{1,1}_\loc(\R,\R_+),\,\int_\R u=m\right\}.
\]
The Lagrangians of the form \(f_\eps(x,u,\xi)=\abs{\xi}^2+c(u)\), in dimension \(N=1\), satisfy all our assumptions \labelcref{hypLowerContinuous}--\labelcref{hypGammaCv}, hence our \(\Gamma\)-convergence result stated in \Cref{mainGammaConvergence} yields the \(\Gamma\)-convergence of the functionals
\[
\ener_\eps(u)=\int_\R\eps^3\abs{u'}^2+\frac{c(\eps u)}{\eps},\quad u\in W^{1,2}(\R,\R_+),
\]
towards \(\mass^{H}\) for both the weak and narrow convergence of measures. Therefore, we may find an elliptic approximation of any concave $H$-mass. Let us stress that $c$ is determined in \cite{wirthPhaseFieldModels2019} from $H$ through several operations including a deconvolution problem, but no closed form solution is given in general; nonetheless, an explicit solution is provided if $c$ is affine by parts.

In higher dimension \(N\geq 2\), \Cref{propositionStricConcave} tells us that the class of functions \(H=H_f\) with \(f\) satisfying \labelcref{hypLowerContinuous}--\labelcref{hypGammaCv} is smaller, namely, $H$ must satisfy:
\begin{equation}
\label{conditionInverseProblemHigher}
\exists m_\ast\geq 0, \quad
\begin{cases}
H\text{ is linear on }[0,m_\ast],\\
H\text{ is strictly concave }(m_\ast,+\infty).
\end{cases}
\end{equation}
We have no positive or negative answer to the inverse problem, consisting in finding \(f\) satisfying our assumptions such that \(H_f=H\), for a given continuous concave function $H:\R_+\to\R_+$ satisfying \labelcref{conditionInverseProblemHigher}.

\subsection{Homogeneous costs in any dimension}\label{homogeneousCosts}

In this section, we provide Lagrangians $f$ to obtain the $\alpha$-mass $\mass^\alpha \coloneqq \mass^{t\mapsto t^\alpha}$ in any dimension $N$ for a wide range of exponents, including exponents $\alpha \in \left(1-\frac 1N,1\right]$. We consider for every $p \in [1,+\infty),s \in (-\infty,1]$ and $N\in \N^\ast$, the energy defined for every $u\in W^{1,1}_{\loc}(\R^N,\R_+)$ by
\begin{equation}\label{energy_homogeneous_case}
\ener_f(u) \coloneqq \int_{\R^N} f(u,\nabla u) \coloneqq \int_{\R^N} \abs{\nabla u}^p + u^s\mathbf{1}_{u>0}.
\end{equation}
Notice that for $p>1$, $f$ satisfies all our hypotheses \labelcref{hypLowerContinuous}--\labelcref{hypSlope} ; in particular, \labelcref{hypSlope} holds as a consequence of the stronger condition \labelcref{hypSlopeLagrangian} (see \Cref{slopeConditionCorollary}) which is satisfied with $\rho(t) = t$  in dimension $N\geq 2$. Thus by \Cref{mainGammaConvergence} the re-scaled energies $\Gamma$-converge to the $H_{f}$-mass. In this case, we may show that $H_f(m)=cm^\alpha$ for some $\alpha\in(0,1)$, $c\in [0, +\infty]$, and the constant $c$ belongs to $(0,+\infty)$ if and only if $s\in (-p',1)$. Details are given hereafter.

\medskip
\paragraph{\bf Homogeneity of $H_f$.}
In order to compute $H_f$, one may first express $H_f(m)$ as the minimum of a scaling invariant expression by optimizing $\ener_f$ over all mass-invariant rescalings $u_\lambda = \lambda^N v(\lambda \cdot)$ of a given function $u\in W^{1,1}_{\loc}(\R^N,\R_+)$ ; it yields
\begin{multline}\label{rescalingCost}
H_f(m)=\inf\Big\{\inf_{0<\lambda<+\infty}\lambda^{Np+p-N}\int_{\R^N}\abs{\nabla u}^p +\lambda^{Ns-N}\int_{\{u>0\}}u^s\st\\ u\in W^{1,1}_{\loc}(\R^N,\R_+),\,\int_{\R^N}u=m\Big\}.
\end{multline}
Computing the infimum w.r.t. $\lambda$, we obtain
\begin{multline}\label{scaleInvariantCost}
H_f(m)=c_{N,p,s}\inf\Big\{\Big(\int_{\R^N}\abs{\nabla u}^p\Big)^{\alpha_1}\Big(\int_{\{u>0\}}u^s\Big)^{\alpha_2}\st \\u\in W^{1,1}_{\loc}(\R^N,\R_+),\,\int_{\R^N}u=m\Big\},
\end{multline}
with the two exponents
$$
\alpha_1\coloneqq \frac{N(1-s)}{Np+p-Ns},\quad 
\alpha_2\coloneqq \frac{Np+p-N}{Np+p-Ns}
$$
and the constant 
$$
c_{N,p,s} \coloneqq \frac{(Np+p-Ns)(N-Ns)^\frac{N(s-1)}{Np+p-Ns}}{(Np+p-N)^\frac{Np+p-N}{Np+p-Ns}}.
$$
In particular, substituting $u$ with $mu$ in \labelcref{scaleInvariantCost} gives
\begin{equation}
\label{alphaCost}
H_f(m)=m^\alpha H_f(1),\quad\text{with} \quad\alpha\coloneqq p\alpha_1+s\alpha_2=\frac{Np+sp-Ns}{Np+p-Ns}.
\end{equation}
It remains to ensure that this function is not trivial, i.e. $0<H_f(1)<+\infty$.

\medskip
\paragraph{\bf Upper bound: $H_f(1)<+\infty$.}
In the case $s\in [0,1]$, any $u\in\Cspace_c^1(\R^N)$ has finite energy, thus $H_f$ is non-trivial for every $p\in [1,+\infty)$. In the case $s< 0$, consider the competitor $u :x \mapsto (1-\abs{x})_+^\gamma$ for $\gamma > 0$ to be fixed later. Then $\int_{\R^N} \abs{\nabla u}^p < +\infty$ if and only if $t \mapsto (1-t)^{(\gamma-1)p}$ is integrable at $1^-$, that is if $(\gamma-1)p > -1$ or, equivalently, $ \gamma > 1- 1/p$. Similarly, $\int_{\{u>0\}} u^s < +\infty$ if and only if $\gamma s > -1$ or, equivalently, $\gamma < -1/s$. Therefore, one may find $\gamma > 0$ satisfying both conditions, and ensure that $H_{f}$ is non-trivial, if
\[-p'<s.\]

\medskip
\paragraph{\bf Lower bound: $H_f(1)>0$.} Here, we can assume w.l.o.g. that $H_f(1)<+\infty$. In the case $s=1$, taking any competitor $u$ is with finite energy and $\lambda\to 0^+$ in \labelcref{rescalingCost} yields $H_f(m)=m$ for every $m\geq 0$. Hence, we may also assume that $s<1$. Note that in view of \labelcref{scaleInvariantCost} and \labelcref{alphaCost}, we know that $H_f(1)$ is related to the greatest constant $c\geq 0$ such that the Gagliardo-Nirenberg-Sobolev (GNS) type inequality\footnote{The notable difference with the classical GNS inequality is that the exponent $s$ is smaller than $1$, and may even be negative.}
\begin{equation}\label{GNS_type_inequality}
c\Big(\int_{\R^N}u\Big)\leq\Big(\int_{\R^N}\abs{\nabla u}^p\Big)^\frac{Np(1-s)}{Np+sp-Ns}\Big(\int_{\{u>0\}}u^s\Big)^\frac{Np+p-N}{Np+sp-Ns},\quad \forall u\in W^{1,1}_{\loc}(\R^N)
\end{equation}
holds true, namely, we have $c=\Big(\frac{H_f(1)}{c_{N,p,s}}\Big)^{1/\alpha}$; hence, the inequality \labelcref{GNS_type_inequality} holds with $c>0$ if and only if $H_f(1)>0$. This is actually equivalent to proving existence of a minimizer of $\ener_f$ over functions $u\in W^{1,1}_{\loc}(\R^N,\R_+)$ with $\int_{\R^N}u=1$ in view of \Cref{existenceProfile}. We prove this, together with uniqueness and basic properties of optimal profiles in the next paragraph.

\medskip
\paragraph{\bf Existence, uniqueness and properties of optimal profiles.} It is well-known that optimal profiles in the classical Gagliardo-Nirenberg-Sobolev inequalities do exist ; besides, up to rescalings and translations they are unique, radially decreasing and compactly supported. Existence of radially decreasing solutions is a consequence of P\'olya–Szeg\"o inequality (see for instance \cite{brothersMinimalRearrangementsSobolev1988}), compactness of the support follows for example from the compact support principle of Pucci, Serrin and Zou \cite{pucciStrongMaximumPrinciple1999} or Pohozaev-type identities, and uniqueness from the work of Serrin and Tang \cite{serrinUniquenessGroundStates2000} for instance. All these techniques may be adapted to our case for exponents $s\in [0,1)$, but we did not find a comprehensive reference, even more so when $s \in (-p',0)$, except for $p=2$ and $s<0$ which is treated in  \cite{dubsProblemesPerturbationsSingulieres1998}. For this reason and for self-containedness, we provide a sketch of proof for the existence, symmetry, compactness of the support for $s\in (-p',1])$ of optimal profiles in \labelcref{energy_homogeneous_case}. We are only able to justify uniqueness for $s\in (0,1]$.

\smallskip

{\it 1. Existence of an optimal profile which is radially symmetric.} The radially symmetric decreasing rearrangement $u^*$ of an admissible function $u\in W^{1,1}_{\loc}(\R^N,\R_+)$ satisfies $\norm{\nabla u^*}_p\leq\norm{\nabla u}_p$ by the P\'olya–Szeg\" o inequality, and we have $\int_{\R^N}\abs{u^*}^s\leq\int_{\R^N}\abs{u}^s$ by equimeasurability. Hence, 
$$\ener_f(u^*)\leq\ener_f(u).$$
Thus we can restrict the minimization to radially symmetric non-increasing functions. Take a minimizing sequence $(u_n)_n$ in this class: $u_n(\cdot)=v_n(\abs{\cdot})$ with $v_n:[0,+\infty)$ non-increasing, $\int_{\R^N}u_n=m\in\R_+$, and $\ener_f(u_n)\to H_f(m)$. $(u_n)_n$ is weakly precompact in $L^1_\mathrm{loc}(\R^N)$, by compact Sobolev embedding, and even globally precompact in $L^1(\R^N)$. Indeed, when $s > 0$, the upper bound 
    $$\abs{\mathbb{S}^{N-1}}v_n(r)^s \frac{r^N}N\leq \abs{\mathbb{S}^{N-1}}\int_0^r v_n(t)^s t^{N-1}\dd t\leq\ener_f(u_n)\leq C<+\infty,\quad\forall r\in\R_+,$$
    gives a uniform integrable decay in $\abs{x}^{-N/s}$ at infinity ; when $s\leq 0$, we use the elementary inequality $t+t^s\geq 1$ for all $t>0$, to obtain
    \begin{equation}
    \label{compatnessSupport}
\abs{\{u_n>0\}}\leq \int_{\{u_n>0\}} (u_n+ u_n^s) \dd x\leq m+C<+\infty,
    \end{equation}
    so that the size of the support of $u_n$ (a ball of radius $R_n>0$) is uniformly bounded w.r.t. $n$. 
    
    We deduce by lower semicontinuity of $\ener_f$, that, extracting a subsequence if necessary, $(u_n)_n$ converges in $L^1$ to a minimizer $u$ of $\ener_f$ with mass $m$, i.e. $\ener_f(u)=H_f(m)$ and $\int_{\R^N}u=m$. In particular, $$H_f(1)>0.$$

{\it 2. Euler-Lagrange equation and compactness of the support.} Let $u(\cdot)=v(\abs{\cdot})$ be a global minimizer of \labelcref{energy_homogeneous_case} with mass $m>0$, with $v:\R_+\to\R_+$ non-increasing. Let also $\Bar B(0,R)$ be the support of $u$, with $0<R\leq +\infty$. (Here, $\Bar B(0,R)=\R^N$ if $R=+\infty$.) Computing the first order variation of the energy, we obtain that for every test function $w\in\mathcal{C}_c^1(\R^N)$ which is compactly supported in $B(0,R)$ and satisfies $\int_{\R^N}w(x)\dd x=0$, we have
    \begin{equation}
    \label{firstOrderConditionU}
    \langle \delta \ener_f(u)\,,\, w \rangle \coloneqq \int_{\R^N} (p(\nabla u)^{p-1}\cdot\nabla w+ su^{s-1}w)\dd x= 0.
        \end{equation}
In other words, $u$ solves in the weak sense the Euler-Lagrange equation
$$
-p\Delta_p u+su^{s-1}=\lambda,  \quad\text{in }B(0,R),
$$
where $\lambda\in\R$ is the Lagrange multiplier associated to the mass constraint. One can see that 
$$\lambda=H_f'(m)=\alpha m^{\alpha-1}H_f(1).$$ 
Indeed, let $u_1$ be a minimizer of $\ener_f$ of mass $1$ ; then a minimizer of $\ener_f$ with mass $m$ is given by $u_m(\cdot)\coloneqq m\lambda_m^N u_1(\lambda_m\,\cdot)$, with $\lambda_m=m^\frac{s-p}{Np+p-Ns}$ (see \labelcref{rescalingCost}) ; hence,
    \begin{multline}
    H_f'(m)=\frac{\dd}{\dd m}\ener_f(u_m
   )=\langle\delta\ener_f(u_m)\,,\, \frac{\dd}{\dd m}u_m \rangle_{L^2(\R^N)}\\ =\langle\lambda\,,\, \frac{\dd}{\dd m}u_m \rangle_{L^2(\R^N)}=\frac{\dd}{\dd m}\langle\lambda\,,\, u_m \rangle_{L^2(\R^N)}=\frac{\dd}{\dd m}(\lambda m) =\lambda.
    \end{multline}
In particular, $\lambda$ is unique.  

From the Euler-Lagrange equation, we also get that $u$ is smooth in $B(0,R)$ by a bootstrap argument ; hence, $u$ solves the Euler-Lagrange equation in the classical sense. In terms of the profile $v$, the Euler-Lagrange equation rewrites
\begin{equation}
\label{EulerLagrange}
-(pv'(r)^{p-1}r^{N-1})'+sv(r)^{s-1}r^{N-1}=\lambda r^{N-1},\quad\forall r\in(0,R).
\end{equation}
For every $r\in (0,R)$, integrating \labelcref{EulerLagrange} on $(0,r)$ yields
\begin{equation}
\label{integrationODE}
pv'(r)^{p-1}r^{N-1}=\int_0^r(sv(\rho)^{s-1}-\lambda)\rho^{N-1}\dd \rho.
\end{equation}

The LHS in \labelcref{integrationODE} is non-positive as $v'\leq 0$ in $(0,R)$. If $s > 0$ and $R=+\infty,$ the RHS is $+\infty$ since the integrand goes to $+\infty$ as $\rho\to R$,  which is a contradiction. When $s\leq 0$, we already saw in \eqref{compatnessSupport} that the support of $v$ is bounded. In any case, we have thus proved that
$$
R<+\infty.
$$

{\it 3. Uniqueness when $s>0$.} We justify that there is a unique minimizer of the energy with mass $m$. Once we know that $v'$ does not vanish on $(0,R)$, the case of equality in P\' olya–Szeg\"o inequality (see \cite[Theorem~1.1]{brothersMinimalRearrangementsSobolev1988}) implies that any minimizer $u$ satisfies $u=u^*$. Then, applying \cite[Theorem~1]{serrinUniquenessGroundStates2000} yields the uniqueness of radial solution to $\Delta_p u+f(u)=0$ for the non-linearity $f(t)=\lambda-st^{s-1}$, thus we get that $v$ is unique. 

 We now prove that $v'<0$ on $(0,R)$.  Since the LHS in \eqref{integrationODE} is non-positive, the non-decreasing function $g:\rho\mapsto sv(\rho)^{s-1}-\lambda$, which tends to $+\infty$ as $\rho\to R$, must be negative near $0$. If we had $v'(r)=0$ for some $r\in(0,R)$, then the non-positive function $t\mapsto pv'(t)^{p-1}t^{N-1}$ would be maximal and its derivative would vanish at $t=r$. By \eqref{integrationODE}, this means that $g(r)=sv(r)^{s-1}-\lambda=0$ so that $g\leq 0$ on $[0,r]$, but also that $g(\rho)\rho^{N-1}$ integrates to $0$ on $[0,r]$. Hence, $g\equiv 0$ on $[0,r]$, a contradiction.

\medskip
\paragraph{\bf Conclusion and range of $\alpha$-masses obtained in this way.} To summarize, we have shown that if \(-p'<s\leq 1\) then $H_{f}$ is non-trivial. The converse is true. Indeed, if $H_f(1)<+\infty$ then, by the preceding, there exists a radial decreasing minimizer $u(\cdot)=v(\abs{\cdot})$ with $v:[0,+\infty)\to\R_+$ non-increasing and compactly supported on $[0,R]$, $0<R<+\infty$. But, by the Young inequality, $\ener_f(u)\geq c\int_{R/2}^R v(r)^{s/p'}\abs{v'(r)}r^{N-1}\dd r\geq c_R\int_0^{v(R/2)}t^{s/p'}\dd t$ with $c_R>0$, the latter being finite if and only if $s/p'>-1$. 

Since $\alpha$, in \labelcref{alphaCost}, is a monotone function of $s$, one may easily compute the range of parameters $\alpha$ that we obtain. If $p$ and $N$ are fixed, $\alpha$ ranges over $\left(\frac{N-1}{N+1-1/p}, 1\right]$ when $s\in (-p',1]$. Hence, when $N=1$ we obtain the whole range $\alpha \in (0,1]$, and at least the range $\alpha \in \left(1-\frac{2}{N+1},1\right]$ when $p$ ranges over $(1+\infty)$ in dimension $N\geq 2$.


\subsection{Branched transport approximation: \texorpdfstring{$H$-masses}{H-masses} of normal \texorpdfstring{$1$-currents}{1-currents}}\label{branchedTransport}

Branched Transport is a variant of classical optimal transport (see \cite{santambrogioOptimalTransportApplied2015} and Section~4.4.2 therein for a brief presentation of branched transport, and \cite{bernotOptimalTransportationNetworks2009} for a vast exposition) where the transport energy concentrates on a network, i.e. a \(1\)-dimensional subset of \(\R^d\), which has a graph structure when optimized with prescribed source and target measures. It can be formulated as a minimal flow problem, 
\[
\min\Big\{\mass_1^H(w)\st \mathrm{div}(w)=\mu^--\mu^+\Big\},
\]
where \(\mu^\pm\) are probability measures on \(\R^d\), \(H:\R^d\times\R_+\to\R_+\) is mass-concave, and the \(H\)-mass \(\mass_1^H\) is this time defined for finite vector measures $w \in \Mspace(\R^d,\R^d)$ whose distributional divergence is also a finite measure; in the language of currents, it is called a $1$-dimensional normal current. Any such measure may be decomposed into a $1$-rectifiable part $\theta\xi \cdot \hdm^1 \mres \Sigma$ where \(\theta(x)\geq 0\) and \(\xi(x)\) is a unit tangent vector to \(\Sigma\) for \(\hdm^1\)-a.e. \(x\in \Sigma\), and a $1$-diffuse part $w^\perp$ satisfying $\abs{w^\perp}(A) = 0$ for every $1$-rectifiable set $A$:
\[
w=\theta\xi\cdot\hdm^1 \mres M +w^\perp.
\]
The $H$-mass is then defined by:
\begin{equation}
\label{HmassVector}
\mass_1^H(w)\coloneqq\int_\Sigma H(x,\theta(x))\dd\hdm^1(x)+\int_{\R^d} H'(x,0)\dd\abs{w^\perp}.
\end{equation}

In the case \(H(x,m) = m^\alpha\) with \(0<\alpha<1\), a family of approximations of these functional has been introduced in \cite{oudetModicaMortolaApproximationBranched2011}:
\begin{equation}
\ener_{\eps}(w)=
\begin{cases}
\int_{\R^d}\eps^{\gamma_1}\abs{\nabla v}^2+\eps^{-\gamma_2}\abs{v}^\beta&\text{if $w = v\lbm^d$, \(v\in W^{1,2}_\loc(\R^d,\R^d)\),}\\
+\infty&\text{otherwise,}
\end{cases}
\end{equation}
with \(\beta=\frac{2-2d+2\alpha d}{3-d+\alpha(d-1)}\), \(\gamma_1=(d-1)(1-\alpha)\) and \(\gamma_2=3-d+\alpha (d-1)\).  It has been shown in \cite{oudetModicaMortolaApproximationBranched2011,monteilUniformEstimatesModica2017} that the functionals \(\ener_{\eps}\) \(\Gamma\)-converge as \(\eps\to 0^+\), in the topology of weak convergence of \(u\) and its divergence, to a non-trivial multiple of the \(\alpha\)-mass \(\mass_1^\alpha \coloneqq \mass_1^H\) with \(H(x,m)=m^\alpha\) in dimension \(d=2\). The result extends to any dimension \(d\), by \cite{monteilEllipticApproximationsSingular2015}, thanks to a slicing method that relates the energy \(\ener_{\eps}\) with the energy of the sliced measures \(u=(w\cdot\nu)_+\) supported on the slices \(V_a=\{x\in\R^d\st x\cdot \nu=a\}\simeq\R^N\), for any given unit vector \(\nu\in\R^d\), defined by
\[
\bar \ener_{\eps}(u) = \int_{\R^N}\eps^{\gamma_1}\abs{\nabla u}^2+\eps^{-\gamma_2}\abs{u}^\beta.
\]
The functionals \(\bar\ener_{\eps}\) \(\Gamma\)-converge as \(\eps\to 0^+\), in the narrow topology, to \(c\mass^\alpha\) for some non-trivial $c$, as shown in \Cref{homogeneousCosts}, and one may recover every $\alpha$-mass in this way for $\alpha \in \left(\frac{2d-4}{2d+1},1\right]$, and in particular every so-called super-critical exponents for Branched Transport in dimension $d$, that is $\alpha \in (1-1/d,1]$.

The same slicing method would allow to extend our \(\Gamma\)-convergence result stated in \Cref{mainGammaConvergence} to functionals defined on vector measure
\begin{equation}
\ener_\eps(w)=
\begin{cases}
\int_{\R^d}f_\eps(x,\eps^{d-1} \abs{v}(x), \eps^{d}\abs{\nabla v}(x)) \eps^{1-d} \dd x &\text{if \(w = v\lbm^d, v\in W^{1,1}_\loc(\R^d,\R^d)\),}\\
+\infty&\text{otherwise,}
\end{cases}
\end{equation}
for Lagrangians \(f_\eps\to f\) fitting the framework of \Cref{mainGammaConvergence}. The expected \(\Gamma\)-limit, for the weak topology of measures and their divergence measure, would be the functional \(\mass_1^{H_f}\), with \(H_f\) defined in \labelcref{def_Hf}. Note that this approach would provide approximations of \(H\)-masses for more general continuous and concave cost functions \(H:\R_+\to\R_+\) satisfying \(H(0)=0\). By \cite{wirthPhaseFieldModels2019}, we would obtain all such \(H\)-masses when \(N=1\) (corresponding to \(d=2\)). 

\subsection{A Cahn-Hilliard model for droplets\label{droplets}}
Following the works \cite{bouchitteTransitionsPhasesAvec1996} in the one-dimensional case and \cite{dubsProblemesPerturbationsSingulieres1998} in higher dimension, we consider functionals on $\Mspace_+(\R^N)$ of the form:
\begin{equation}\label{defW}
\mathcal W_\eps(\mu) = \begin{dcases*}
\int_{\R^N} \eps^{-\rho} (W(u) + \eps\abs{\nabla u}^2) & if $\mu = u \lbm^N, u\in W^{1,1}_\loc(\R^N,\R_+)$,\\
+\infty& otherwise,
\end{dcases*}
\end{equation}
where $W : \R_+ \to \R_+$ is a Borel function satisfying $W(t) \sim_{u\to+\infty} u^s$ for some exponent $s\in (-\infty,1)$. In \cite{bouchitteTransitionsPhasesAvec1996, dubsProblemesPerturbationsSingulieres1998}, it is in particular proven, under some assumptions on the slope of $W$ at $0$ and its regularity, that the family $(\mathcal W_\eps)_{\eps > 0}$ $\Gamma$-converges to a non-trivial multiple of the $\alpha$-mass, $\alpha = \frac{1-s/2+s/N}{1-s/2+1/N}$, when $s \in (-2,1)$ and $\rho = \rho(s,N) \coloneqq \frac{N(1-s)}{(N+2)+N(1-s)}$.
In this section, we recover this $\Gamma$-convergence result using our general model.

Replacing $\eps$ with $\bar \eps \coloneqq \eps^{(N+2)+N(1-s)}$ and noticing that $1-\rho = \frac{N+2}{(N+2)+N(1-s)}$, one gets for every $u\in W^{1,1}_\loc(\R^N,\R_+)$:
\begin{align*}
\mathcal W_{\bar \eps}(u)&= \int_{\R^N} \eps^{-N(1-s)} W(u) + \eps^{N+2} \abs{\nabla u}^2\\ &= \int_{\R^N} \left( [\eps^{Ns} W(\eps^{-N} \eps^N u)] + \abs{\eps^{N+1} \nabla u}^2 \right) \eps^{-N}\\
&= \int_{\R^N} f_\eps^W(x,\eps^N u,\eps^{N+1} \nabla u) \eps^{-N},
\end{align*}
where $f_\eps^W$ is defined for every $x\in \R^N,u\in \R_+, \xi \in \R^N$ by
\begin{equation*}
f_\eps^W(x,u,\xi) \coloneqq W_\eps(u) + \abs{\xi}^2 \quad \text{and}\quad W_\eps(u) \coloneqq \eps^{Ns} W(\eps^{-N}u).
\end{equation*}
Therefore if we take $f_\eps = f_\eps^W$ in our general model \labelcref{mainEnergyEpsilon} we exactly get $\mathcal W_{\bar\eps} = \ener_\eps$. The fact that $W(u) \sim u^s$ as $u \to +\infty$ implies that $W_\eps$ converges pointwise to the map $w_s : u \mapsto u^s$ if $u > 0$, $w_s(0) = 0$, hence $f_\eps^W$ converges to $f_s : (x,u,\xi) \mapsto w_s(u) + \abs{\xi}^2$.
\begin{theorem}\label{GammaConvergenceW}
Assume that $W : \R_+ \to \R_+$ satisfies:
\begin{enumerate}[\textnormal{(HW}$_1$\textnormal{)},leftmargin=*]
\item\label{hypWlsc} $W$ is lower semicontinuous,
\item\label{hypWzero} $\{W = 0 \} = \{ 0\}$,
\item\label{hypWpower} $W(u) \sim_{u\to +\infty} u^s$ for some $s\in (-\infty,1)$,
\item\label{hypWbound} $\displaystyle\sup_{u>0} \frac{W(u)}{u^s} < +\infty$,
\item\label{hypWslope} $\displaystyle 0 < \liminf_{u\to 0^+}\frac{W(u)}u$.
\end{enumerate}
Then $(\mathcal W_\eps)_{\eps > 0}$ $\Gamma$-converges to $\mass^{H_{f_s}}$, for both topologies $\Cspace_0'$ and $\Cspace_b'$, and if $s \in (-2,1]$ then $\mass^{H_{f_s}}$ is a non-trivial multiple of $\mass^\alpha$ where $\alpha = \frac{1-s/2+s/N}{1-s/2+1/N}$.
\end{theorem}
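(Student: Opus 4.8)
The plan is to apply the general lower bound \Cref{lowerBound} and the general upper bound \Cref{limsup_upper_bound} to the identification $\mathcal W_{\bar\eps} = \ener_\eps$ obtained above, where $\bar\eps = \eps^{(N+2)+N(1-s)}$, $f_\eps = f_\eps^W$ with $f_\eps^W(x,u,\xi) = W_\eps(u) + \abs{\xi}^2$, $W_\eps(u) = \eps^{Ns}W(\eps^{-N}u)$, and $f = f_s$ with $f_s(x,u,\xi) = w_s(u) + \abs{\xi}^2$. Since $\eps \mapsto \bar\eps$ is a homeomorphism of $(0,+\infty)$ fixing $0$, it is equivalent to show that $\ener_\eps$ $\Gamma$-converges to $\mass^{H_{f_s}}$ for both $\Cspace_0'$ and $\Cspace_b'$. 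I record two preliminary facts: since $w_s(u)/u = u^{s-1} \to +\infty$ as $u \to 0^+$ (because $s < 1$), we have $(f_s)'_-(x,0^\pm,0) = +\infty$, so $H_{f_s}^- = H_{f_s}$ by \labelcref{definitionLowerCost}; and since $f_s$ is independent of $x$, \Cref{concaveDN} gives that $H_{f_s}$ is either $\equiv +\infty$ on $(0,+\infty)$ or continuous on $[0,+\infty)$, hence lower semicontinuous in either case, so $H_{f,\lsc} = H_{f_s}$ and $\mass^{H_{f,\lsc}} = \mass^{H_{f_s}}$. For the $\Gamma-\limsup$, I would check that $(f_\eps^W,f_s)$ satisfies \labelcref{hypUpperGammaLimit} and \labelcref{hypZero}: the latter because $W(0)=0$ by \labelcref{hypWzero}; for the former, writing $W_\eps(u) = \tfrac{W(\eps^{-N}u)}{(\eps^{-N}u)^s}u^s$ for $u > 0$, \labelcref{hypWpower} gives $W_\eps(u) \to u^s = w_s(u)$, hence $\limsup_{\eps\to 0} f_\eps^W(x+\eps y,u,\xi) = f_s(x,u,\xi)$, while \labelcref{hypWbound} gives $W(v) \leq C_1 v^s$, so $W_\eps(u) \leq C_1 u^s \leq C_1(f_s(x,u,\xi) + u)$ uniformly in $\eps$. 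Then \Cref{limsup_upper_bound} yields $\Gamma(\Cspace_b')-\limsup_{\eps\to 0}\ener_\eps \leq \mass^{H_{f,\lsc}} = \mass^{H_{f_s}}$.

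The $\Gamma-\liminf$ is the delicate part, and the main obstacle is that $f_\eps^W$ need not increase monotonically to $f_s$, so \Cref{lowerBound} (and \Cref{mainGammaConvergence}) does not apply to $(f_\eps^W)$ directly. I would get around this by constructing a monotone minorant. Set $\psi(v) = W(v)/v^s$ on $(0,+\infty)$ — lower semicontinuous by \labelcref{hypWlsc}, bounded by \labelcref{hypWbound}, with $\psi(v) \to 1$ as $v \to +\infty$ by \labelcref{hypWpower} — put $\underline\psi(w) = \inf_{v \geq w}\psi(v)$, which is non-decreasing, $\leq \psi$, and $\to 1$ as $w \to +\infty$, and let $\underline\psi^-$ be its left-continuous (hence lower semicontinuous) modification. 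Define $g_\eps(u,\xi) = \underline\psi^-(\eps^{-N}u)\,u^s + \abs{\xi}^2$ for $u > 0$, $g_\eps(0,\xi) = \abs{\xi}^2$, and $g_\eps = +\infty$ for $u < 0$. Then $g_\eps \leq f_\eps^W$ since $\underline\psi^- \leq \psi$; each $g_\eps$ satisfies \labelcref{hypLowerContinuous} (product of the nonnegative l.s.c. function $\underline\psi^-(\eps^{-N}\cdot)$ with the continuous $u^s$ on $(0,+\infty)$, plus a check at $u=0$ where $g_\eps \geq \abs{\xi}^2$), \labelcref{hypConvex}, \labelcref{hypZero}, and \labelcref{hypCompact} with $p=2$; and $g_\eps \uparrow f_s$ pointwise, because $w \mapsto \underline\psi^-(w)$ is non-decreasing with supremum $1$. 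The crucial point is the slope at the origin: \labelcref{hypWslope} gives $W(v) \geq c_1 v$ for small $v$, hence $\psi(v) \geq c_1 v^{1-s}$ near $0$, and since $\inf_{v \geq \delta_0}\psi(v) > 0$ (a positive l.s.c. function tending to $1$), one deduces $\underline\psi^-(w) \geq c_1 w^{1-s}$ for $w$ small, whence $(g_\eps)'_-(x,0^+,0) \geq c_1 \eps^{-N(1-s)} \to +\infty = (f_s)'_-(x,0^+,0)$, monotonically in $\eps$. Thus $(g_\eps)$ satisfies \labelcref{hypGammaCv} with each $g_\eps$ satisfying \labelcref{hypLowerContinuous}--\labelcref{hypCompact}, so \Cref{lowerBound} and the monotonicity $g_\eps \leq f_\eps^W$ give $\Gamma(\Cspace_0')-\liminf_{\eps\to 0}\ener_\eps \geq \Gamma(\Cspace_0')-\liminf_{\eps\to 0}\ener_\eps^{g} \geq \mass^{H_{f_s}^-} = \mass^{H_{f_s}}$.

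Combining the two bounds and using that narrow recovery sequences are weak recovery sequences, all of $\Gamma(\Cspace_0')-\liminf$, $\Gamma(\Cspace_0')-\limsup$, $\Gamma(\Cspace_b')-\liminf$ and $\Gamma(\Cspace_b')-\limsup$ of $\ener_\eps$ equal $\mass^{H_{f_s}}$, and by the reparametrization $\eps \mapsto \bar\eps$ the same holds for $\mathcal W_\eps$. Finally, when $s \in (-2,1)$, $f_s$ is exactly the Lagrangian of \Cref{homogeneousCosts} with $p=2$ (defined on $\R_+$, independent of $x$), so that computation gives $H_{f_s}(m) = c\,m^\alpha$ with $\alpha = \frac{1-s/2+s/N}{1-s/2+1/N} \in (0,1)$ and $0 < c < +\infty$. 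Since $\alpha < 1$, $H_{f_s}'(0^\pm) = +\infty = (t\mapsto t^\alpha)'(0^\pm)$, so the $H_{f_s}$-transform equals $c$ times the $(t\mapsto t^\alpha)$-transform on every $u \in \Mspace_+(\R^N)$, i.e. $\mass^{H_{f_s}} = c\,\mass^\alpha$, which is a non-trivial multiple of $\mass^\alpha$.
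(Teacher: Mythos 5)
Your proposal is correct, and the overall strategy is the one the paper uses: identify $\mathcal W_{\bar\eps}=\ener_\eps$ for the scaled family $f_\eps^W$, obtain the $\Gamma$-$\limsup$ from \Cref{limsup_upper_bound} after checking \labelcref{hypUpperGammaLimit}, and obtain the $\Gamma$-$\liminf$ by replacing $f_\eps^W$ with a monotone minorant family to which \Cref{lowerBound} applies. The difference is in how that minorant is built. The paper's \Cref{lemmaW} produces, for each fixed $\delta\in(0,1)$, the very explicit minorant $f_\eps^\delta(x,u,\xi)=\delta\bigl(\abs{\xi}^2+(u^s\wedge c_\delta\eps^{-N(1-s)}u)\bigr)$, applies \Cref{lowerBound} for that $\delta$ (yielding $\mass^{H^-_{\delta f_s}}=\delta\,\mass^{H_{f_s}}$), and finally sends $\delta\to 1$; you instead introduce $\psi(v)=W(v)/v^s$, its tail lower envelope $\underline\psi(w)=\inf_{v\geq w}\psi(v)$ and its left-continuous modification $\underline\psi^-$, and set $g_\eps(u,\xi)=\underline\psi^-(\eps^{-N}u)\,u^s+\abs{\xi}^2$, which increases to $f_s$ directly with no auxiliary $\delta$. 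What each buys: the paper's minorant is elementary (a minimum with a linear function) so all hypotheses \labelcref{hypLowerContinuous}--\labelcref{hypGammaCv} are immediate, at the cost of a trivial but extra limit $\delta\to 1$; yours is sharper and avoids that limit, but is a bit more technical — one must check that $\underline\psi^-$ is lower semicontinuous (true: a non-decreasing function is l.s.c.\ iff left-continuous, which the left-continuous modification is by construction), that the product $\underline\psi^-(\eps^{-N}\cdot)\,(\cdot)^s$ is l.s.c.\ on $(0,+\infty)$ (true since $\underline\psi^-(\eps^{-N}\cdot)$ is l.s.c.\ and nonnegative and $(\cdot)^s$ is continuous and strictly positive there), and that $(g_\eps)'_-(\cdot,0^+,0)$ increases as $\eps\downarrow 0$ (true because $g_\eps$ is pointwise non-decreasing in $\eps^{-1}$, so the liminf defining the slope is non-decreasing as well, not merely bounded below by $c_1\eps^{-N(1-s)}$). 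These points are only sketched in your write-up and should be spelled out, but they all go through, so the argument is sound and slightly streamlines the paper's proof.
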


To prove this theorem, we start with a simple lemma.

\begin{lemma}\label{lemmaW}
Assume that $W$ satisfies \labelcref{hypWlsc}--\labelcref{hypWslope}. Then for every $\delta \in (0,1)$, there exists $c_\delta \in (0,+\infty)$ such that for every $\eps > 0$ and every $u \in \R_+$,
\begin{equation}
\delta ( u^s \wedge c_\delta \eps^{-N(1-s)} u) \leq W_\eps(u).
\end{equation}
\end{lemma}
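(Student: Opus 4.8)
The plan is to begin by eliminating the parameter $\eps$ via the change of variables $t = \eps^{-N}u$. Since $W_\eps(u) = \eps^{Ns}W(\eps^{-N}u)$ and, for $u = \eps^N t$, one has $u^s = \eps^{Ns}t^s$ and $\eps^{-N(1-s)}u = \eps^{Ns}t$, both sides of the asserted inequality carry the common positive factor $\eps^{Ns}$. As $u$ ranges over $\R_+$ with $\eps>0$ fixed, $t$ ranges over $\R_+$, so the statement is equivalent to the $\eps$-free claim: for every $\delta\in(0,1)$ there exists $c_\delta\in(0,+\infty)$ such that
\[
\delta\bigl(t^s\wedge c_\delta t\bigr)\le W(t)\qquad\text{for all }t\ge 0.
\]

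I would then establish this inequality by splitting $\R_+$ into three ranges. The point $t=0$ is trivial, since the left-hand side vanishes there and $W(0)=0$ by \labelcref{hypWzero}. For large $t$: by \labelcref{hypWpower}, $W(t)/t^s\to 1>\delta$, so there is $T_1$ with $W(t)\ge \delta t^s$ for $t\ge T_1$; as $t^s\wedge c_\delta t\le t^s$, the inequality then holds on $[T_1,+\infty)$ for any $c_\delta$. For small $t$: by \labelcref{hypWslope}, $\liminf_{t\to 0^+}W(t)/t>0$, so there are $T_2\in(0,T_1)$ and $\kappa>0$ with $W(t)\ge\kappa t$ on $(0,T_2]$; since $t^s\wedge c_\delta t\le c_\delta t$, it suffices that $\delta c_\delta\le\kappa$. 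On the compact interval $[T_2,T_1]$: $W$ is lower semicontinuous by \labelcref{hypWlsc} and strictly positive there by \labelcref{hypWzero}, hence $m_0\coloneqq\min_{[T_2,T_1]}W>0$; since $t^s\wedge c_\delta t\le c_\delta t\le c_\delta T_1$ on this interval, it suffices that $\delta c_\delta T_1\le m_0$. Choosing $c_\delta\coloneqq\min\{\kappa/\delta,\,m_0/(\delta T_1)\}>0$ satisfies both constraints at once and finishes the proof.

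There is essentially no serious obstacle here; the argument is a routine three-region decomposition. The only points that need a little care are the convention for $t^s$ at $t=0$ (irrelevant, as the product $c_\delta t$ kills the minimum), the fact that $T_2$ can be chosen strictly below $T_1$ (always possible by shrinking $T_2$ or enlarging $T_1$), and — the subtlest step — that it is genuinely lower semicontinuity, and not just pointwise positivity, which guarantees $m_0>0$ on the middle interval. Note that hypothesis \labelcref{hypWbound} is not needed for this lemma.
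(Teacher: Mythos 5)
Your proof is correct and uses essentially the same ingredients as the paper's, just organized slightly differently. The paper first divides out the scaling by working with the auxiliary function $w(u)=W(u)/u$ on $(0,M]$, arguing that $w$ is bounded below by a positive constant there via lower semicontinuity together with \labelcref{hypWslope}; this compresses your second and third regions into one step, but the underlying argument is the same compactness-plus-positivity reasoning. Your explicit change of variables $t=\eps^{-N}u$ to eliminate $\eps$ is a tidy presentational simplification (the paper performs the same cancellation implicitly), and your remark that \labelcref{hypWbound} is not needed for this lemma is accurate and consistent with the paper's proof.
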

\begin{proof}
Fix $\delta \in (0,1)$. There exists $M > 0$ such that $\delta u^s \leq W(u)$ for every $u \geq M$. Besides, the map $w : u \mapsto W(u)/u$ is lower semicontinuous and positive on $(0,M]$ by \labelcref{hypWlsc} and \labelcref{hypWzero}, and since $\liminf_{u\to 0} w(u) > 0$ by \labelcref{hypWslope}, $w$ is necessarily bounded from below on $(0,M]$ by some constant $c > 0$. As a consequence $W_\eps(u) \geq \delta u^s$ if $u\geq \eps^N M$ and $W_\eps(u) \geq c \eps^{N(s-1)}u$ if $u\leq \eps^N M$, hence:
\[\forall u\in \R, \quad W_\eps(u) \geq \delta ( u^s \wedge c \eps^{-N(1-s)} u).\qedhere\]
\end{proof}

\begin{proof}[Proof of \Cref{GammaConvergenceW}]
By \labelcref{hypWbound}, there exists a constant $C$ such that $f_\eps^W \leq C f_s$ for every $\eps$, and since $f_\eps^W$ does not depend on the $x$ variable and converges pointwise to $f_s$, \labelcref{hypUpperGammaLimit} is satisfied and our $\Gamma-\limsup$ result stated in \Cref{limsup_upper_bound} yields
\[\mass^{H_{f_s}} \geq \Gamma(\Cspace_b')-\limsup_{\eps \to 0} \ener_\eps.\]
Fix $\delta \in (0,1)$. By \Cref{lemmaW}, there exists $c_\delta$ such that
\[\forall x,u,\xi, \quad f_\eps^W(x,u,\xi) \geq \delta(\abs{\xi}^2 + (u^s \wedge c_\delta \eps^{-N(1-s)} u) \eqqcolon f_\eps^\delta(x,u,\xi).\]
It is easy to check that $f^\delta_\eps$ satisfies \labelcref{hypLowerContinuous}, \labelcref{hypConvex} and \labelcref{hypCompact} for every $\eps > 0$. Moreover $f_\eps^\delta \uparrow \delta f_s$ and $(f_{\eps}^\delta)_-'(\cdot,0^+,0) = \delta c_\delta \eps^{-N(1-s)} \uparrow (+\infty) = (\delta f_s)_-'(\cdot,0^+,0)$ as $\eps \to 0$, thus \labelcref{hypGammaCv} holds for the family $(f_\eps^\delta)_{\eps > 0}$, and by applying our $\Gamma-\liminf$ result stated in \Cref{lowerBound} to the energies $\ener_\eps^\delta$ induced by $f_\eps^\delta$ we get:
\[\Gamma(\Cspace_0')-\liminf \ener_\eps \geq \Gamma(\Cspace_0')-\liminf \ener_\eps^\delta \geq \mass^{H_{\delta f_s}^-}.\]
We get the result by taking the limit $\delta \to 1$, noticing that $(f_s)_-'(\cdot,0^+,0) = +\infty$, so that $H_{\delta f_s}^- = H_{\delta f_s} = \delta H_{f_s}$ and $\mass^{H_{\delta f_s}^-} = \mass^{\delta H_{f_s}} = \delta \mass^{H_{f_s}}$.
\end{proof}

\begin{remark}
We recover the $\Gamma$-convergence results of \cite{bouchitteTransitionsPhasesAvec1996} and \cite{dubsProblemesPerturbationsSingulieres1998} when $s \in (-2,1)$ under slightly more general assumptions: besides \labelcref{hypWzero} and \labelcref{hypWpower}, the authors impose the existence of a non-trivial slope $\lim_{u\to 0} \frac{W(u)}u \in (0,+\infty)$ and a regularity condition (either $W$ is of class $\Cspace^1$ or continuous and non-decreasing close to $0$), which are stronger than \labelcref{hypWlsc}, \labelcref{hypWbound} and \labelcref{hypWslope}. Let us stress however that these works also tackle the cases $s < -2$ in any dimension, where the exponent $\rho$ has to be fixed to $\rho(-2,N)$, and the case $s=-2$ in dimension one, where a logarithmic factor must be introduced, replacing $\eps^{-\rho}$ with $\eps^{-\rho(-2,1)} \abs{\log\eps}^{-1} = \eps^{-1/2} \abs{\log \eps}^{-1}$. This implies that in our model we get a trivial $\Gamma$-limit when $s \leq -2$, namely $H_{f_s} \equiv +\infty$ on $(0,+\infty)$.
\end{remark}

\printbibliography

\end{document}